\newif\ifpersonal
\theoremstyle{plain}
\newtheorem{thm-intro}{Theorem}
\newtheorem{thm}{Theorem}[section]
\newtheorem*{thm*}{Theorem}
\newtheorem{lem}[thm]{Lemma}
\newtheorem{prop}[thm]{Proposition}
\newtheorem{cor}[thm]{Corollary}
\theoremstyle{definition}
\newtheorem{defin}[thm]{Definition}
\newtheorem{notation}[thm]{Notation}
\newtheorem{eg}[thm]{Example}
\newtheorem{rem}[thm]{Remark}
\numberwithin{equation}{section}
\newtheorem{construction}[thm]{Construction}
\newcommand{\personal}[1]{\textcolor[rgb]{0,0,1}{(Personal: #1)}}
\newcommand{\todo}[1]{\textcolor{red}{(Todo: #1)}}
\newcommand*{\personal}[1]{\ignorespaces}
\newcommand*{\todo}[1]{\ignorespaces}
\newcommand{\rL}{\mathrm L}
\newcommand{\rR}{\mathrm R}
\newcommand{\fF}{\mathfrak F}
\newcommand{\fG}{\mathfrak G}
\newcommand{\fU}{\mathfrak U}
\newcommand{\fX}{\mathfrak X}
\newcommand{\fY}{\mathfrak Y}
\newcommand{\fZ}{\mathfrak Z}
\newcommand{\ff}{\mathfrak f}
\newcommand{\cC}{\mathcal C}
\newcommand{\cD}{\mathcal D}
\newcommand{\cF}{\mathcal F}
\newcommand{\cH}{\mathcal H}
\newcommand{\cG}{\mathcal G}
\newcommand{\cK}{\mathcal K}
\newcommand{\cO}{\mathcal O}
\newcommand{\cQ}{\mathcal Q}
\newcommand{\cS}{\mathcal S}
\newcommand{\cT}{\mathcal T}
\newcommand{\cX}{\mathcal X}
\newcommand{\cY}{\mathcal Y}
\DeclareFontFamily{U}{BOONDOX-calo}{\skewchar\font=45 }
\DeclareFontShape{U}{BOONDOX-calo}{m}{n}{<-> s*[1.05] BOONDOX-r-calo}{}
\DeclareFontShape{U}{BOONDOX-calo}{b}{n}{<-> s*[1.05] BOONDOX-b-calo}{}
\DeclareMathAlphabet{\mathcalboondox}{U}{BOONDOX-calo}{m}{n}
\newcommand{\bA}{\mathbf A}
\let\save@mathaccent\mathaccent
\newcommand*\if@single[3]{%
	\setbox0\hbox{${\mathaccent"0362{#1}}^H$}%
	\setbox2\hbox{${\mathaccent"0362{\kern0pt#1}}^H$}%
	\ifdim\ht0=\ht2 #3\else #2\fi
}
\newcommand*\rel@kern[1]{\kern#1\dimexpr\macc@kerna}
\newcommand*\widebar[1]{\@ifnextchar^{{\wide@bar{#1}{0}}}{\wide@bar{#1}{1}}}
\newcommand*\wide@bar[2]{\if@single{#1}{\wide@bar@{#1}{#2}{1}}{\wide@bar@{#1}{#2}{2}}}
\newcommand*\wide@bar@[3]{%
	\begingroup
	\def\mathaccent##1##2{%
		\let\mathaccent\save@mathaccent
		\if#32 \let\macc@nucleus\first@char \fi
		\setbox\z@\hbox{$\macc@style{\macc@nucleus}_{}$}%
		\setbox\tw@\hbox{$\macc@style{\macc@nucleus}{}_{}$}%
		\dimen@\wd\tw@
		\advance\dimen@-\wd\z@
		\divide\dimen@ 3
		\@tempdima\wd\tw@
		\advance\@tempdima-\scriptspace
		\divide\@tempdima 10
		\advance\dimen@-\@tempdima
		\ifdim\dimen@>\z@ \dimen@0pt\fi
		\rel@kern{0.6}\kern-\dimen@
		\if#31
		\overline{\rel@kern{-0.6}\kern\dimen@\macc@nucleus\rel@kern{0.4}\kern\dimen@}%
		\advance\dimen@0.4\dimexpr\macc@kerna
		\let\final@kern#2%
		\ifdim\dimen@<\z@ \let\final@kern1\fi
		\if\final@kern1 \kern-\dimen@\fi
		\else
		\overline{\rel@kern{-0.6}\kern\dimen@#1}%
		\fi
	}%
	\macc@depth\@ne
	\let\math@bgroup\@empty \let\math@egroup\macc@set@skewchar
	\mathsurround\z@ \frozen@everymath{\mathgroup\macc@group\relax}%
	\macc@set@skewchar\relax
	\let\mathaccentV\macc@nested@a
	\if#31
	\macc@nested@a\relax111{#1}%
	\else
	\def\gobble@till@marker##1\endmarker{}%
	\futurelet\first@char\gobble@till@marker#1\endmarker
	\ifcat\noexpand\first@char A\else
	\def\first@char{}%
	\fi
	\macc@nested@a\relax111{\first@char}%
	\fi
	\endgroup
}
\newcommand{\talpha}{\widetilde{\alpha}}
\newcommand{\tbeta}{\widetilde{\beta}}
\newcommand{\PSh}{\mathrm{PSh}}
\newcommand{\Shv}{\mathrm{Shv}}
\newcommand{\infcat}{$\infty$-category\xspace}
\newcommand{\infcats}{$\infty$-categories\xspace}
\newcommand{\inftopos}{$\infty$-topos\xspace}
\newcommand{\tauet}{\tau_\mathrm{\acute{e}t}}
\newcommand{\Mod}{\mathrm{Mod}}
\newcommand{\Coh}{\mathrm{Coh}}
\newcommand{\Cohb}{\mathrm{Coh}^{\mathsf b}}
\newcommand{\Cohh}{\mathrm{Coh}^\heartsuit}
\newcommand{\An}{\mathrm{An}}
\newcommand{\Top}{\mathcal T\mathrm{op}}
\newcommand{\dfDM}{\mathrm{dfDM}}
\newcommand{\dfSch}{\mathrm{dfSch}}
\newcommand{\rig}{\mathrm{rig}}
\newcommand{\rigg}{(-)^{\mathrm{rig}}}
\newcommand{\loc}{\mathrm{loc}}
\newcommand{\cTad}{\cT_{\mathrm{adic}}(k^\circ)}
\newcommand{\dAn}{\mathrm{dAn}}
\newcommand{\dAnk}{\mathrm{dAn}_k}
\newcommand{\cTan}{\cT_{\mathrm{an}}}
\newcommand{\cTank}{\cT_{\mathrm{an}}(k)}
\newcommand{\cTdisc}{\cT_{\mathrm{disc}}}
\newcommand{\cTdisck}{\cT_{\mathrm{disc}}(k)}
\newcommand{\cTet}{\cT_{\mathrm{\acute{e}t}}}
\newcommand{\Str}{\mathrm{Str}}
\newcommand{\Strloc}{\mathrm{Str}^\mathrm{loc}}
\newcommand{\RTop}{\tensor*[^\rR]{\Top}{}}
\newcommand{\dAfd}{\mathrm{dAfd}}
\newcommand{\trunc}{\mathrm{t}_0}
\newcommand{\CAlg}{\mathrm{CAlg}}
\newcommand{\CAlgad}{\mathrm{CAlg}^{\mathrm{ad}}}
\newcommand{\Cat}{\mathrm{Cat}}
\newcommand{\fib}{\mathrm{fib}}
\newcommand{\DerAn}{\mathrm{Der}\an}
\newcommand{\anL}{\mathbb L\an}
\newcommand{\adL}{\mathbb L^{\mathrm{ad}}}
\newcommand{\dAff}{\mathrm{dAff}}
\newcommand{\bfMap}{\mathbf{Map}}
\newcommand{\PrL}{\mathcal P \mathrm{r}^{\mathrm{L}}}
\newcommand{\Perf}{\mathrm{Perf}}
\newcommand{\Hilb}{\mathrm{Hilb}}
\newcommand{\FormalModels}{\mathrm{FM}}
\newcommand{\Ind}{\mathrm{Ind}}
\newcommand{\cofib}{\mathrm{cofib}}
\newcommand{\kc}{k^\circ}
\newcommand{\an}{^\mathrm{an}}
\newcommand{\alg}{^\mathrm{alg}}
\newcommand{\et}{_\mathrm{\acute{e}t}}
\newcommand{\fet}{_\mathrm{f\acute{e}t}}
\newcommand{\inv}{^{-1}}
\newcommand{\kanal}{$k$-analytic\xspace}
\newcommand{\op}{^\mathrm{op}}
\newcommand{\DM}{Deligne-Mumford\xspace}
\newcommand*{\longhookrightarrow}{\ensuremath{\lhook\joinrel\relbar\joinrel\rightarrow}}
\tikzset{
  closed/.style = {decoration = {markings, mark = at position 0.5 with { \node[transform shape, xscale = .8, yscale=.4] {/}; } }, postaction = {decorate} },
  open/.style = {decoration = {markings, mark = at position 0.5 with { \node[transform shape, scale = .7] {$\circ$}; } }, postaction = {decorate} }
}
\DeclareMathOperator{\Fun}{Fun}
\DeclareMathOperator{\Hom}{Hom}
\DeclareMathOperator{\Map}{Map}
\DeclareMathOperator{\Sp}{Sp}
\DeclareMathOperator{\Spec}{Spec}
\DeclareMathOperator{\Spf}{Spf}
\DeclareMathOperator*{\colim}{colim}
\begin{document}
\title{Derived non-archimedean analytic Hilbert space}

\author{Jorge ANT\'ONIO}
\address{Jorge Ant\'onio,  Institut de Math\'ematiques de Toulouse, 118 Rue de Narbonne  31400 Toulouse}
\email{\texttt{jorge\_tiago.ferrera\_antonio@math.univ-toulouse.fr}}

\author{Mauro PORTA}
\address{Mauro PORTA, Institut de Recherche Mathématique Avancée, 7 Rue René Descartes, 67000 Strasbourg, France}
\email{porta@math.unistra.fr}

\date{\today}
\subjclass[2010]{Primary 14G22; Secondary 14A20, 18B25, 18F99}
\keywords{derived geometry, rigid geometry, Hilbert scheme, formal models, derived generic fiber}

\begin{abstract}
In this short paper we combine the representability theorem introduced in \cite{Porta_Yu_Representability,Porta_Yu_Mapping} with the theory of derived formal models introduced in \cite{Antonio_Formal_models} to prove the existence representability of the derived Hilbert space $\mathbf{R} \mathrm{Hilb}(X)$ for a separated \kanal space $X$.
Such representability results relies on a localization theorem stating that if $\fX$ is a quasi-compact and quasi-separated formal scheme, then the \infcat $\mathrm{Coh}^+(\fX^\rig)$ of almost perfect complexes over the generic fiber can be realized as a Verdier quotient of the \infcat $\Coh^+(\fX)$.
Along the way, we prove several results concerning the the \infcats of formal models for almost perfect modules on derived $k$-analytic spaces.
\end{abstract}

\maketitle

\personal{PERSONAL COMMENTS ARE SHOWN!!!}

\tableofcontents

\section{Introduction}

Let $k$ be a non-archimedean field equipped with a non-trivial valuation of rank $1$.
We let $k^\circ$ denote its ring of integers, $\mathfrak m$ an ideal of definition.
We furthermore assume that $\mathfrak m$ is finitely generated.
Given a separated \kanal space $X$, we are concerned with the existence of the \emph{derived} moduli space $\mathbf{R} \mathrm{Hilb}(X)$, which parametrizes flat families of closed subschemes of $X$.
The truncation of $\mathbf{R} \mathrm{Hilb}(X)$ coincides with the classical Hilbert scheme functor, $\Hilb(X)$, which has been shown to be representable by a \kanal space in \cite{Conrad_Spreading-out}.
On the other hand, in algebraic geometry the representability of the derived Hilbert scheme is an easy consequence of Artin-Lurie representability theorem.
In this paper, we combine the analytic version of Lurie's representability obtained by T.\ Y.\ Yu and the second author in \cite{Porta_Yu_Representability} together with a theory of derived formal models developped by the first author in \cite{Antonio_Formal_models}.
The only missing step is to establish the existence of the cotangent complex.

Indeed, the techniques introduced in \cite{Porta_Yu_Mapping} allows to prove the existence of the cotangent complex at points $x \colon S \to \mathbf{R} \mathrm{Hilb}(X)$ corresponding to families of closed subschemes $j \colon Z \hookrightarrow S \times X$ which are of finite presentation in the derived sense.
However, not every point of $\mathbf{R} \mathrm{Hilb}(X)$ satisfies this condition: typically, we are concerned with families which are \emph{almost} of finite presentation.
The difference between the two situations is governed by the relative analytic cotangent complex $\anL_{Z / S \times X}$: $Z$ is (almost) of finite presentation if $\anL_{Z/S \times X}$ is (almost) perfect.
We can explain the main difficulty as follows: if $p \colon Z \to S$ denotes the projection to $S$, then the cotangent complex of $\mathbf{R} \mathrm{Hilb}(X)$ at $x \colon S \to \mathbf{R} \mathrm{Hilb}(X)$ is computed by $p_+( \anL_{Z / S \times X} )$.
Here, $p_+$ is a (partial) left adjoint for the functor $p^*$, which has been introduced in the \kanal setting in \cite{Porta_Yu_Mapping}.
However, in loc.\ cit.\ the functor $p_+$ has only been defined on perfect complexes, rather than on almost perfect complexes.
From this point of view, the main contribution of this paper is to provide an extension of the construction $p_+$ to almost perfect complexes.
Our construction relies heavily on the existence results for formal models of derived \kanal spaces obtained by the first author in \cite{Antonio_Formal_models}.
Along the way, we establish three results that we deem to be of independent interest, and which we briefly summarize below.\\

Let $\fX$ be a derived formal $\kc$-scheme topologically almost of finite presentation.
One of the main construction of \cite{Antonio_Moduli_of_representations, Antonio_Formal_models, antonio2019moduli} is the generic fiber $\fX^\rig$, which is a derived \kanal space.
The formalism introduced in loc.\ cit.\ provides as well an exact functor
\begin{equation} \label{eq:generic_fiber_coherent_sheaves}
	(-)^\rig \colon \Coh^+(\fX) \longrightarrow \Coh^+( \fX^\rig ) ,
\end{equation}
where $\Coh^+$ denotes the stable $\infty$-category of almost perfect complexes on $\fX$ and on $\fX^\rig$.
When $\fX$ is underived, this functor has been considered at length in \cite{Hennion_Porta_Vezzosi_Formal_gluing}, where in particular it has been shown to be essentially surjective, thereby extending the classical theory of formal models for coherent sheaves on \kanal spaces.
In this paper we extend this result to the case where $\fX$ is derived, which is a key technical step in our construction of the plus pushforward. 
In order to do so, we will establish the following descent statement, which is an extension of \cite[Theorem 7.3]{Hennion_Porta_Vezzosi_Formal_gluing}:

\begin{thm-intro} \label{thm:intro_descent}
	The functor $\mathrm{Coh}^+_{\mathrm{loc}} \colon \dAnk \to \mathrm{Cat}_\infty^{\mathrm{st}}$, which associates to every derived formal derived scheme
		\[
				\fX \in \dfDM \mapsto \mathrm{Coh}^+(\fX^\rig)
				 \in \mathrm{Cat}^\mathrm{st}_{\infty},
		\]	
	satisfies Zariski hyper-descent.
\end{thm-intro}

We refer the reader to \cref{thm:descent_punctured_category} for the precise statement. S consequence of \cref{thm:intro_descent} above is the following statement, concerning the properties of \infcats of formal models for almost perfect complexes on $X \in \dAnk$:

\begin{thm-intro}[\cref{thm:formal_models_filtered}] \label{thm:intro_cat_formal_model}
	Let $X \in \dAn_k$ be a derived $k$-analytic space and let $\cF \in \Coh^+(\cF)$ be a bounded below almost perfect complex on $X$.
	For any derived formal model $\fX$ of $X$, there exists $\cG \in \Coh^+(\fX)$ and an equivalence $\cG^\rig \simeq \cF$.
	Furthermore, the full subcategory of $\Coh^+(\fX) \times_{\Coh^+(X)} \Coh^+(X)_{/\cF}$ spanned by formal models of $\cF$ is filtered.
\end{thm-intro}

\cref{thm:intro_cat_formal_model} is another key technical ingredient in the proof of the existence of a plus pushforward construction.
The third auxiliary result we need is a refinement of the existence theorem for formal models for morphisms of derived analytic spaces proven in \cite{Antonio_Formal_models}.
It can be stated as follows:

\begin{thm-intro}[\cref{thm:formal_model_flat_map}] \label{thm:intro_flat_lifting}
	Let $f \colon X \to Y$ be a flat map between derived \kanal spaces.
	Then there are formal models $\fX$ and $\fY$ for $X$ and $Y$ respectively and a flat map $\ff \colon \fX \to \fY$ whose generic fiber is equivalent to $f$.
\end{thm-intro}

The classical analogue of \cref{thm:intro_flat_lifting} was proven by Bosch and Lutk\"ebohmert in \cite{Bosch_Formal_IV}.
The proof of this theorem is not entirely obvious: indeed the algorithm provided in \cite{Antonio_Formal_models} proceeds by induction on the Postnikov tower of both $X$ and $Y$, and at each step uses \cite[Theorem 7.3]{Hennion_Porta_Vezzosi_Formal_gluing} to choose appropriately formal models for $\pi_i(\cO_X\alg)$ and $\pi_i(\cO_Y\alg)$.
In the current situation, however, the flatness requirement on $\ff$ makes it impossible to freely choose a formal model for $\pi_i(\cO_X\alg)$.
We circumvent the problem by proving a certain lifting property for morphisms of almost perfect complexes:

\begin{thm-intro}[\cref{lift_p_maps}]
	Let $X \in \dAn_k$ be a derived \kanal space and let $f \colon \cF \to \cG$ be a morphism in $\Coh^+(X)$.
	Let $\fX$ denote a given formal model for $X$. Suppose, futhermore, that we are given formal models $\widetilde{\cF}, \widetilde{\cG} \in \Coh^+(X)$ for $\cF$ and $\cG$, respectively. Then, there exists a non-zero element $t \in \mathfrak m$ such that the map $t^n f$ admits a lift $\widetilde{f} \colon \widetilde{\cF} \to \widetilde{\cG}$,
	in the \infcat $\mathrm{Coh}^+(\fX)$.
\end{thm-intro}

Finally, the techniques of the current text allow us to prove the following generalization of \cite[Theorem 8.6]{Porta_Yu_Mapping}:

\begin{thm-intro}[\cref{thm:rep_mapping}]
	Let $S$ be a rigid \kanal space.
	Let $X,Y$ be rigid \kanal spaces over $S$.
	Assume that $X$ is proper and flat over $S$ and that $Y$ is separated over $S$.
	Then the $\infty$-functor $\bfMap_S(X,Y)$ is representable by a derived \kanal space separated over $S$.
\end{thm-intro}

\paragraph{\bf{Notation and conventions}}

\todo{Modify: not all notations below are useful, and we need others. For instance, $\mathrm{fSch}_{\kc}$, $\bA^1_{\kc} \coloneqq \Spf( \kc \{ T \} )$...}

In this paper we freely use the language of $\infty$-categories.
Although the discussion is often independent of the chosen model for $\infty$-categories, whenever needed we identify them with quasi-categories and refer to \cite{HTT} for the necessary foundational material.

The notations $\cS$ and $\Cat_\infty$ are reserved to denote the $\infty$-categories of spaces and of $\infty$-categories, respectively.
If $\cC \in \Cat_\infty$ we denote by $\cC^\simeq$ the maximal $\infty$-groupoid contained in $\cC$.
We let $\Cat_\infty^{\mathrm{st}}$ denote the $\infty$-category of stable $\infty$-categories with exact functors between them.
We also let $\PrL$ denote the $\infty$-category of presentable $\infty$-categories with left adjoints between them.
Similarly, we let $\PrL_{\mathrm{st}}$ denote the $\infty$-categories of stably presentable $\infty$-categories with left adjoints between them.
Finally, we set
\[ \Cat_\infty^{\mathrm{st}, \otimes} \coloneqq \CAlg( \Cat_\infty^{\mathrm{st}} ) \quad , \quad \mathcal P \mathrm r_{\mathrm{st}}^{\mathrm L, \otimes} \coloneqq \CAlg( \PrL_{\mathrm{st}} ) . \]

Given an $\infty$-category $\cC$ we denote by $\PSh(\cC)$ the $\infty$-category of $\cS$-valued presheaves.
We follow the conventions introduced in \cite[\S 2.4]{Porta_Yu_Higher_analytic_stacks_2014} for $\infty$-categories of sheaves on an $\infty$-site.

For a field $k$, we reserve the notation $\CAlg_k$ for the $\infty$-category of simplicial commutative rings over $k$.
We often refer to objects in $\CAlg_k$ simply as \emph{derived commutative rings}.
We denote its opposite by $\dAff_k$, and we refer to it as the $\infty$-category of \emph{derived affine schemes}.
We say that a derived ring $A \in \CAlg_k$ is \emph{almost of finite presentation} if $\pi_0(A)$ is of finite presentation over $k$ and $\pi_i(A)$ is a finitely presented $\pi_0(A)$-module.\footnote{Equivalently, $A$ is almost of finite presentation if $\pi_0(A)$ is of finite presentation and the cotangent complex $\mathbb L_{A/k}$ is an almost perfect complex over $A$.}
We denote by $\dAff_k^{\mathrm{afp}}$ the full subcategory of $\dAff_k$ spanned by derived affine schemes $\Spec(A)$ such that $A$ is almost of finite presentation.
When $k$ is either a non-archimedean field equipped with a non-trivial valuation or is the field of complex numbers, we let $\An_k$ denote the category of analytic spaces over $k$.
We denote by $\Sp(k)$ the analytic space associated to $k$.

\paragraph{\textbf{Acknowledgments}}

We are grateful to B.\ Hennion, M.\ Robalo, B.\ To\"en, G.\ Vezzosi and T.\ Y.\ Yu for useful discussions related to the content of this paper.
\todo{Other people?}

We are especially grateful to B.\ To\"en for inviting the second author to Toulouse during the fall 2017, when the bulk of this research has been carried out and to IRMA, Universit\'e de Strasbourg, for inviting the first author to Strasbourg during the spring 2018. 
\todo{Acknowledge the PEPS.}

\section{Preliminaries on derived formal and derived non-archimedean geometries}

Let $k$ denote a non-archimedean field equipped with a rank $1$ valuation.
We let $\kc = \{ x \in k : | x | \leq 1 \}$ denote its ring of integers.
We assume that $\kc$ admits a finitely generated ideal of definition $\mathfrak m$.

\begin{notation} \label{notation:pregeometries}
	\begin{enumerate}
		\item Let $R$ be a discrete commutative ring. Let $\cTdisc(R)$ denote the full subcategory of $R$-schemes spanned by affine spaces $\mathbb A^n_R$. We say that a morphism in $\cTdisc(R)$ is \emph{admissible} if it is an isomorphism.
		We endow $\cTdisc(R)$ with the trivial Grothendieck topology.
		
		\item Let $\cTad$ denote the full subcategory of $\kc$-schemes spanned by formally smooth formal schemes which are topologically finitely generated over $\kc$. A morphism in $\cTad$ is said to be \emph{admissible} if it is formally \'etale.
		We equip the category $\cTad$ with the formally \'etale topology, $\tau\et$.
		
		\item Denote $\cTan(k)$ the category of smooth $k$-analytic spaces. A morphism in $\cTan(k)$ is said to be \emph{admissible} if it is \'etale. We endow $\cTank$ with the \'etale topology, $\tau\et$. 
	\end{enumerate}
\end{notation}

In what follows, we will let $\cT$ denote either one of the categories introduced above.
We let $\tau$ denote the corresponding Grothendieck topology.

\begin{defin}
Let $\cX$ be an \inftopos. A \emph{$\cT$-structure} on $\cX$ is a functor $\cO \colon \cT \to \cX$ which commutes with finite products, pullbacks along admissible morphisms and takes $\tau$-coverings in effective epimorphisms. We denote by $\Str_{\cT}( \cX )$ the full subcategory of $\Fun_{\cT} \left( \cT, \cX \right)$ spanned by $\cT$-structures.
A \emph{$\cT$-structured $\infty$-topos} is a pair $(\cX, \cO)$, where $\cX$ is an $\infty$-topos and $\cO \in \Str_\cT(\cX)$.
\end{defin}

We can assemble $\cT$-structured $\infty$-topoi into an $\infty$-category denoted $\RTop(\cT)$.
We refer to \cite[Definition 1.4.8]{DAG-V} for the precise construction.
\personal{Let $\RTop$ be the \infcat of $\infty$-topoi together with right geometric morphisms: a morphism $f \colon \cX \to \cY$ is a geometric morphism $f_* \colon \cX \rightleftarrows \cY \colon f\inv$ where $f_*$ is the right adjoint.
The functor $\Fun(\cT,-) \colon \Cat_\infty \to \Cat_\infty$ restricts to a functor
\[ \Fun(\cT,-) \colon \left(\RTop\right)\op \longrightarrow \Cat_\infty , \]
which sends a geometric morphism $(f\inv, f_*)$ to the functor induced by composition with $f\inv$.
Since the left adjoint of a geometric morphism preserves finite limits, it follows that it respects the full subcategories of $\cT$-structures.
In other words, we obtain a well defined functor
\[ \Str_\cT \colon \left( \RTop \right)\op \longrightarrow \Cat_\infty . \]
This defines a Cartesian fibration $p_{\Str} \colon \RTop( \cT)  \to \RTop\op$ and we can identify objects of $\RTop$ as pairs $(\cX, \cO)$, where $\cX \in \RTop$ and $\cO \in \Str_{\cT} ( \cX)$. We say that an object of $\RTop ( \cT)$ is a \emph{$\cT$-structured \inftopos}.}

\begin{defin}
	Let $\cX$ be an $\infty$-topos.
	A morphism of $\cT$-structures $\alpha \colon \cO \to \cO'$ is said to be \emph{local} if for every admissible morphism $f \colon U \to V$ in $\cT$ the diagram
	\[ \begin{tikzcd}
	\cO (U) \ar{r}{\cO(f)} \ar{d}{\alpha_U} & \cO(V) \ar{d}{\alpha_V} \\
	\cO'(U) \ar{r}{\cO'(f)} & \cO'(V)
	\end{tikzcd} \]
	is a pullback square in $\cX$.
	We denote by $\Strloc_{\cT} ( \cX )$ the (non full) subcategory of $\Str_{\cT} \left( \cX \right)$ spanned by local structures and local morphisms between these.
\end{defin}

\begin{eg} \label{eg1}
\begin{enumerate}
	\item Let $R$ be a discrete commutative ring. A $\cTdisc(R)$-structure on an $\infty$-topos $\cX$ is simply a product preserving functor $\cO \colon \cTdisc(R) \to \cX$.
	When $\cX = \cS$ is the $\infty$-topos of spaces, we can therefore use \cite[Proposition 5.5.9.2]{HTT} to identify the $\infty$-category $\Str_{\cTdisc(R)}(\cX)$ with the underlying $\infty$-category $\CAlg_R$ of the model category of simplicial commutative $R$-algebras.
	It follows that $\Str_{\cTdisc(R)}(\cX)$ is canonically identified with the $\infty$-category of sheaves on $\cX$ with values in $\CAlg_R$.
	For this reason, we write $\CAlg_R(\cX)$ rather than $\Strloc_{\cTdisc(R)}(\cX)$.
	
	\item Let $\fX$ denote a formal scheme over $\kc$ complete along $t \in \kc$. Denote by $\fX\fet$ the small formal \'etale site on $\fX$ and denote $\cX := \Shv(\fX\fet, \tau\et)^\wedge$ denote the hypercompletion of the $\infty$-topos of formally \'etale sheaves on $\fX$. We define a $\cTad$-structure on $\cX$ as the functor which sends $U \in \fX\fet$ to the sheaf $\cO( U) \in \cX$ defined by the association
		\[ V \in \fX\fet \mapsto \Hom_{\mathrm{fSch}_{\kc}} \left( V, U \right) \in \cS. \]
	In this case, $\cO( \bA^1_{\kc})$ corresponds to the sheaf of functions on $\fX$ whose support is contained in the $t$-locus of $\fX$.
	\personal{We can as well perform a similar construction in the case where $\fX$ is Deligne-Mumford stack over $\kc$, complete along $t \in \kc$.}
	To simplify the notation, we write $\mathrm{fCAlg}_{\kc}(\cX)$ rather than $\Strloc_{\cTad}(\cX)$.
	
	\item Let $X$ be a \kanal space and denote $X\et$ the associated small \'etale site on $X$. Let $\cX := \Shv(X\et, \tauet)^{\wedge}$ denote the hypercompletion of the $\infty$-topos of \'etale sheaves on $X$. We can attach to $X$ a $\cTank$-structure on $\cX$ as follows: given $U \in \cTank$, we define the sheaf $\cO(U) \in \cX$ by
	\[ X\et \ni V \mapsto \Hom_{\An_k} \left( V, U \right) \in \cS . \]
	As in the previous case, we can canonically identify $\cO(\bA^1_k)$ with the usual sheaf of analytic functions on $X$.
	We write $\mathrm{AnRing}_k(\cX)$ rather than $\Strloc_{\cTan(k)}(\cX)$.
\end{enumerate}
\end{eg}

\begin{construction} \label{construction:morphims_pregeometries}
	Let $\cX$ be an $\infty$-topos.
	We can relate the $\infty$-categories $\Str_{\cTdisc(\kc)}(\cX)$, $\Str_{\cTdisck}(\cX)$, $\Str_{\cTad}(\cX)$ and $\Str_{\cTank}(\cX)$ as follows.
	Consider the following functors
	\begin{enumerate}
		\item the functor
		\[ - \otimes_{\kc} k \colon \cTdisc(\kc) \longrightarrow \cTdisck . \]
		induced by base change along the map $\kc \to k$.
		
		\item The functor
		\[ (-)^\wedge_t \colon \cTdisc(\kc) \longrightarrow \cTad . \]
		 induced by the $(t)$-completion.
		
		\item The functor
		\[ (-)\an \colon \cTdisck \longrightarrow \cTank , \]
		induced by the analytification.
		
		\item The functor
		\[ (-)^\rig \colon \cTad \longrightarrow \cTank \]
		induced by Raynaud's generic fiber construction (cf.\ \cite[Theorem 8.4.3]{Bosch_Lectures_2014}).
	\end{enumerate}
	These functors respect the classes of admissible morphisms and are continuous morphisms of sites.
	It follows that precomposition with them induce well defined functors
	\begin{gather*}
		\Str_{\cTdisck}(\cX) \longrightarrow \Str_{\cTdisc(\kc)}(\cX) \quad , \quad (-)\alg \colon \Str_{\cTad}(\cX) \longrightarrow \Str_{\cTdisc(\kc)}(\cX) \\
		(-)^+ \colon \Str_{\cTank}(\cX) \longrightarrow \Str_{\cTad}(\cX) \quad , \quad (-)\alg \colon \Str_{\cTank}(\cX) \longrightarrow \Str_{\cTdisck}(\cX) .
	\end{gather*}
	The first functor simply forgets the $k$-algebra structure to a $\kc$-algebra one via the natural map $\kc \to k$.
	We refer to the second and fourth functors as the \emph{underlying algebra functors}.
	The third functor is an analogue of taking the subring of power-bounded elements in rigid geometry.
\end{construction}

Using the underlying algebra functors introduced in the above construction, we can at last introduce the definitions of derived formal scheme and derived \kanal space.
They are analogous to each other:

\begin{defin}
	A $\cTad$-structured $\infty$-topos $\fX \coloneqq (\cX, \cO_\fX)$ is said to be a \emph{derived formal \DM $\kc$-stack} if there exists a collection of objects $\{U_i\}_{i \in I}$ in $\cX$ such that $\coprod_{i \in I} U_i \to \mathbf 1_\cX$ is an effective epimorphism and the following conditions are met:
	\begin{enumerate}
		\item for every $i \in I$, the $\cTad$-structured $\infty$-topos $(\cX_{/U_i}, \pi_0(\cO_\fX |_{U_i}))$ is equivalent to the $\cTad$-structured $\infty$-topos arising from an affine formal $\kc$-scheme via the construction given in \cref{eg1}.
		
		\item For each $i \in I$ and each integer $n \ge 0$, the sheaf $\pi_n( \cO_\fX\alg |_{U_i} )$ is a quasi-coherent sheaf over $(\cX_{/U_i}, \pi_0( \cO_\fX |_{U_i} ))$. 
	\end{enumerate}
	We say that $\fX = (\cX, \cO_\fX)$ is a \emph{formal derived $\kc$-scheme} if it is a derived formal Deligne Mumford stack and furthermore its truncation $\trunc(\fX) \coloneqq (\cX, \pi_0(\cO_\fX))$ is equivalent to the $\cTad$-structured $\infty$-topos associated to a formal scheme via \cref{eg1}.
\end{defin}

\begin{defin}
	A $\cTank$-structured $\infty$-topos $X \coloneqq (\cX, \cO_X)$ is said to be a \emph{derived \kanal space} if $\cX$ is hypercomplete and there exists a collection of objects $\{U_i\}_{i \in I}$ in $\cX$ such that $\coprod_{i \in I} U_i \to \mathbf 1_\cX$ is an effective epimorphism and the following conditions are met:
	\begin{enumerate}
		\item for each $i \in I$, the $\cTank$-structured $\infty$-topos $( \cX_{ / U_i}, \pi_0( \cO_X |_{U_i} ) )$ is equivalent to the $\cTank$-structured $\infty$-topos arising from an ordinary \kanal space via the construction given in \cref{eg1}.
		\item For each $i \in I$ and each integer $n \geq 0$, the sheaf $ \pi_n(\cO_X\alg |_{U_i})$ is a coherent sheaf on $(\cX_{/U_i}, \cO_X |_{U_i})$.
	\end{enumerate}
\end{defin}

\begin{thm}[{cf.\ \cite{Antonio_Formal_models,DAG-IX,Porta_Yu_DNAnG_I} }]
	Derived formal \DM $\kc$-stacks and derived \kanal spaces assemble into $\infty$-categories, denoted respectively $\dfDM_{\kc}$ and $\dAn_k$, which enjoy the following properties:
	\begin{enumerate}
		\item fiber products exist in both $\dfDM_{\kc}$ and $\dAnk$;
		\item The constructions given in \cref{eg1} induce full faithful embeddings from the categories of ordinary formal \DM $\kc$-stacks $\mathrm{fDM}_{\kc}$ and of ordinary \kanal spaces $\An_k$ in $\dfDM_{\kc}$ and $\dAn_k$, respectively.
	\end{enumerate}
\end{thm}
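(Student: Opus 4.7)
The plan is to treat each of the two assertions by reducing to a local, affine statement where we can appeal to Lurie's general machinery of structured $\infty$-topoi. Both $\dfDM_{\kc}$ and $\dAn_k$ are by definition full subcategories of the ambient $\infty$-categories $\RTop(\cTad)$ and $\RTop(\cTank)$ respectively, and the first input to the proof is that these ambient categories admit all small limits and that the forgetful functors to $\RTop$ preserve them (cf.\ \cite[\S 1.4]{DAG-V}).

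For the existence of fiber products, the task is therefore to check that the full subcategories of derived formal \DM $\kc$-stacks (respectively, derived \kanal spaces) are stable under pullback along an arbitrary cospan $\fY \to \fZ \leftarrow \fX$. This question is local on each of the three factors, so we may assume that all three are affine. In the formal case, affine objects arise via the relative spectrum functor from derived adic $\kc$-algebras topologically almost of finite presentation, and the fiber product is computed by the derived $\mathfrak m$-completed tensor product of such algebras; stability under almost finite presentation follows because completion commutes with the relevant finite colimits on almost perfect modules. In the analytic case, one instead applies the spectrum functor $\Spec^{\cTank}_{\cTdisck}$, and the required pushout in the $\infty$-category of derived \kanal rings together with the verification that it is again almost of finite presentation is carried out in \cite{Porta_Yu_DNAnG_I}.

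For the fully faithful embeddings, let $X, Y$ be two ordinary formal \DM $\kc$-stacks; by \cref{eg1} they give rise to $\cTad$-structured $\infty$-topoi whose structure sheaves are $0$-truncated. The mapping space $\Map_{\dfDM_{\kc}}(X, Y)$ is computed in $\RTop(\cTad)$ as the $\infty$-groupoid of pairs $(f, \alpha)$ with $f \colon \cX \to \cY$ a geometric morphism and $\alpha \colon f^{-1}\cO_Y \to \cO_X$ a local morphism of $\cTad$-structures. Since $\cO_Y$ is discrete and $\cO_X$ is discrete, the mapping space of such pairs collapses to an ordinary set, which recovers exactly the classical Hom-set between $X$ and $Y$. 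The analytic case is entirely parallel, now using $\cTank$-structures.

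The main obstacle is the closure of $\dAn_k$ under fiber products: unlike in the formal setting, the analytic tensor product is not literally a completion of an algebraic tensor product, so one must use the representability criteria from \cite{Porta_Yu_DNAnG_I} to guarantee that the derived Stein-algebra pushout produces a genuine derived \kanal space whose truncation agrees with the classical fiber product of \kanal spaces. Once this representability is in hand, the remaining verifications are essentially formal consequences of the definitions.
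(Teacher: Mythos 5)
The paper does not actually prove this theorem: it is stated with a ``cf.'' and imported wholesale from \cite{Antonio_Formal_models}, \cite{DAG-IX} and \cite{Porta_Yu_DNAnG_I}, so your proposal can only be measured against the arguments in those references. Measured that way, your outline for part (1) has a genuine gap at its very first step. You take as ``first input'' that the ambient categories $\RTop(\cTad)$ and $\RTop(\cTank)$ admit the relevant limits, preserved by the forgetful functor to $\RTop$, so that everything reduces to checking that the full subcategories $\dfDM_{\kc}$ and $\dAnk$ are stable under pullbacks. That is precisely the non-formal point. The structure sheaf of a would-be fiber product $(\cX,\cO_\cX) \times_{(\cZ,\cO_\cZ)} (\cY,\cO_\cY)$ has to be a pushout of the pulled-back structures computed in $\Strloc_{\cT}(\cW)$, where $\cW$ is the fiber product of the underlying $\infty$-topoi; but $\Strloc_{\cT}(\cW)$ is a non-full subcategory of $\Str_{\cT}(\cW)$, is not closed under colimits computed there, and such ``local'' pushouts need not exist for a general pregeometry. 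This is exactly why \cite{DAG-IX} appears among the citations: fiber products of derived analytic spaces exist because the pregeometry is \emph{unramified} in Lurie's sense, which is what makes the local pushout computable and shows that the resulting structured $\infty$-topos is again a derived analytic space; \cite{Porta_Yu_DNAnG_I} verifies unramifiedness for $\cTank$, and \cite{Antonio_Formal_models} runs the adic analogue. Your proposal never mentions this condition, and without the existence theorem the reduction ``the question is local on the three factors, so assume affine'' is circular --- there is as yet no object whose locality could be exploited. The affine analytic step also does not parse as written: the spectrum functor $\Spec^{\cTank}_{\cTdisck}$ analytifies \emph{discrete} ($\cTdisck$-)structures; it does not take a pushout of derived $k$-analytic rings to a derived affinoid, and \cite{Porta_Yu_DNAnG_I} contains no construction of affinoid fiber products via such pushouts. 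The formal side of your sketch (completed tensor products of adic algebras topologically almost of finite presentation) is closer to what \cite{Antonio_Formal_models} actually does, but there too the existence is obtained through the adic version of the unramifiedness machinery rather than through stability inside $\RTop(\cTad)$.

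Part (2) is also too quick. A morphism in $\RTop(\cT)$ is a pair $(f,\alpha)$, and its mapping space is built from the space of geometric morphisms of the underlying $\infty$-topoi \emph{and} the space of local transformations of structures; discreteness of $\cO_X$ and $\cO_Y$ says nothing about the former. For ordinary formal \DM stacks the claimed collapse to a set is not even the desired statement: $\mathrm{fDM}_{\kc}$ is a $(2,1)$-category, and full faithfulness means an equivalence with the classical mapping \emph{groupoids}. Even for ordinary $k$-analytic spaces, the identification of $\Map_{\dAnk}(X,Y)$ with the set $\Hom_{\An_k}(X,Y)$ is a theorem, proved in \cite{Porta_Yu_DNAnG_I} using that the \'etale $\infty$-topos of a $k$-analytic space is $1$-localic and the structure sheaf is $0$-truncated, together with the mapping-space comparisons of \cite{DAG-V}; the heart of the matter is that a geometric morphism of \'etale topoi compatible with the structure sheaves arises from a unique morphism of analytic spaces, and your two-line collapse does not address this.
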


Following \cite[\S 8.1]{Lurie_SAG}, we let $\CAlgad$ denote the $\infty$-category of simplicial commutative rings equipped with an adic topology on their $0$-th truncation.
Morphisms are morphisms of simplicial commutative rings that are furthermore continuous for the adic topologies on their $0$-th truncations.
We set
\[ \CAlgad_{\kc} \coloneqq \CAlgad_{\kc/} ,  \]
where we regard $\kc$ equipped with its $\mathfrak m$-adic topology.
Thanks to \cite[Remark 3.1.4]{Antonio_Formal_models}, the underlying algebra functor $(-)\alg \colon \mathrm{fCAlg}_{\kc}(\cX) \to \CAlg_{\kc}(\cX)$ factors through $\CAlgad_{\kc}(\cX)$.
\personal{Fix $A \in \mathrm{fCAlg}_{\kc}(\cX)$. Then we are \emph{not} equipping $A\alg$ with the $(t)$-adic topology (although this would indeed give a functor $\mathrm{fCAlg}_{\kc}(\cX) \to \CAlgad_{\kc}(\cX)$, except that it would be induced by $\CAlg_{\kc}(\cX) \to \CAlgad_{\kc}(\cX)$, which is not what we want).
	To understand this functor, consider the reduction $\kc_n$ of $\kc$ modulo $(t^n)$.
	Then for every $n \ge 1$ we have $\kc \to \kc_n$, which induces a transformation of pregeometries $\cTad \to \cTet(\kc_n)$, and therefore a functor
	\[ L_n \colon \mathrm{fCAlg}_{\kc}(\cX) \longrightarrow \CAlg_{\kc_n}(\cX) , \]
	which satisfies
	\[ L_n(A) \simeq A\alg \otimes_{\kc} \kc_n . \]
	For every $n$, consider $I_n \coloneqq \ker( \pi_0( A\alg ) \to \pi_0( A\alg \otimes_{\kc} \kc_n ) )$.
	Then the sequence $\{I_n\}$ defines an adic topology on $A\alg$, which is compatible with the $(t)$-adic topology on $\kc$.
	It follows that $(A\alg, \{I_n\})$ defines an element in $\CAlgad_{\kc}(\cX)$.}
We denote by $(-)^{\mathrm{ad}}$ the resulting functor:
\[ (-)^{\mathrm{ad}} \colon \mathrm{fCAlg}_{\kc}(\cX) \longrightarrow \CAlgad_{\kc}(\cX) . \]

\begin{defin}
	Let $A \in \mathrm{fCAlg}_{\kc}(\cX)$.
	We say that $A$ is \emph{topologically almost of finite type over $\kc$} if the underlying sheaf of $\kc$-adic algebras $A^{\mathrm{ad}}$ is $t$-complete, $\pi_0(A\alg)$ is sheaf of topologically of finite type $\kc$-adic algebras and for each $i > 0$, $\pi_i(A)$ is finitely generated as $\pi_0(A)$-module.
	
	We say that a derived formal \DM stack $\fX \coloneqq (\cX, \cO_\fX)$ if \emph{topologically almost of finite type over $\kc$} if its underlying $\infty$-topos is coherent (cf.\ \cite[\S 3]{DAG-VII}) and $\cO_\fX \in \mathrm{fCAlg}_{\kc}(\cX)$ is topologically almost of finite type over $\kc$.
	We denote by $\dfDM^{\mathrm{taft}}$  (resp.\ $\mathrm{dfSch}^{\mathrm{taft}}$) the full subcategory of $\dfDM_{\kc}$ spanned by those derived formal \DM stacks $\fX$ that are topologically almost of finite type over $\kc$ (resp.\ and whose truncation $\trunc(\fX)$ is equivalent to a formal $\kc$-scheme).
\end{defin}

The transformation of pregeometries
\[ \rigg \colon \cTad \longrightarrow \cTank \]
induced by Raynaud's generic fiber functor induces $\RTop(\cTank) \to \RTop(\cTad)$.
\cite[Theorem 2.1.1]{DAG-V} provides a right adjoint to this last functor, which we still denote
\[ \rigg \colon \RTop(\cTad) \longrightarrow \RTop(\cTank) . \]
We refer to this functor as the \emph{derived generic fiber functor} or as the \emph{derived rigidification functor}.

\begin{thm}[{\cite[Corollary 4.1.4, Proposition 4.1.6]{Antonio_Formal_models}}] \label{prop21}
	The functor $\rigg \colon \RTop(\cTad) \to \RTop(\cTank)$ enjoys the following properties:
	\begin{enumerate}
		\item it restricts to a functor
		\[ \rigg \colon \dfDM^{\mathrm{taft}} \longrightarrow \dAn_k . \]
		
		\item The restriction of $\rigg \colon \dfDM^{\mathrm{taft}} \to \dAn_k$ to the full subcategory $\mathrm{fSch}_{\kc}^{\mathrm{taft}}$ is canonically equivalent to Raynaud's generic fiber functor.
		
		\item Every derived analytic space $X \in \dAn_k$ whose truncation is an ordinary \kanal space\footnote{The $\infty$-category $\dAnk$ also contains \kanal \DM stacks.} lies in the essential image of the functor $\rigg$.
	\end{enumerate}
\end{thm}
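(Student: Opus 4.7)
The plan is to reduce the three claims to local computations on affine derived formal schemes and then use Postnikov induction for the essential surjectivity statement.

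For part (1), my approach would be to work locally: cover $\fX \in \dfDM^{\mathrm{taft}}$ by affine derived formal opens $\Spf(A)$ with $A$ topologically almost of finite type over $\kc$. Because the rigidification is a right adjoint and the pregeometry transformation $(-)^\rig \colon \cTad \to \cTank$ sends $\Spf(\kc\{T_1,\ldots,T_n\})$ to the closed polydisc, the derived generic fiber $\Spf(A)^\rig$ inherits a chart equivalent at the truncated level to the classical affinoid $\Sp(A_k)$, where $A_k \coloneqq \pi_0(A) \otimes_{\kc} k$ is the associated affinoid $k$-algebra. The coherence requirement on $\pi_i(\cO_{\fX^\rig}\alg)$ then follows from the fact that $\pi_i(A)$ is finitely generated as a $\pi_0(A)$-module, combined with exactness and flatness properties of the classical Raynaud generic fiber on the category of coherent modules.

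For part (2), the strategy is to unfold the universal property of the right adjoint on affine formal schemes. On the classical full subcategory $\mathrm{fSch}_{\kc}^{\taft}$, the restriction of $\rigg$ is pinned down by what it does on the generators $\Spf(\kc\{T_1,\ldots,T_n\})$ of the pregeometry $\cTad$, where by construction it agrees with Raynaud's construction. A descent argument along an admissible cover then identifies the two functors globally.

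Part (3) is the main obstacle and would be tackled by induction on the Postnikov tower of $X$. The base case is the classical Raynaud theorem, which produces a formal model $\fX_0$ for the truncation $\tau_{\le 0}X = \trunc(X)$. Assuming a formal model $\fX_n$ of $\tau_{\le n}X$ has been constructed, the passage from $\tau_{\le n}X$ to $\tau_{\le n+1}X$ is a square-zero extension by $\pi_{n+1}(\cO_X\alg)[n+1]$, which is coherent on $\tau_{\le 0}X$. Using the classical existence theorem for formal models of coherent sheaves (the underived version of \cref{thm:intro_cat_formal_model}, e.g. Bosch--L\"utkebohmert or \cite{Hennion_Porta_Vezzosi_Formal_gluing}), we can select a formal model $\widetilde{\cF}$ for this homotopy sheaf on $\fX_0$. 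The derived deformation theory of $\cTad$-structured $\infty$-topoi then provides a classifying space of square-zero extensions of $\fX_n$ by $\widetilde{\cF}[n+1]$, and we need to show that the relevant obstruction class in this space lifts the obstruction class defining $\tau_{\le n+1}X \to \tau_{\le n}X$ under the rigidification functor. The hard point here is to verify that rigidification induces a surjection on $\pi_0$ of the appropriate mapping spaces of derivations; this is exactly where the finite presentation hypothesis on $A$ and the compatibility between the analytic and formal cotangent complexes enter. Passing to the limit of the Postnikov tower, using that $\dfDM^{\taft}$ admits the relevant inverse limits and that $\rigg$ commutes with them, produces a formal model $\fX$ whose generic fiber recovers $X$.
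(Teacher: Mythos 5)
First, a point of calibration: the paper does not prove this theorem at all — it is imported verbatim from \cite[Corollary 4.1.4, Proposition 4.1.6]{Antonio_Formal_models}, so the only comparison available is with the cited proof, whose shape the introduction of this very paper records: affine-local identifications $\trunc(\fX^\rig) \simeq (\trunc \fX)^\rig$ and $B \simeq A \otimes_{\kc} k$ for parts (1)--(2), and, for part (3), induction on the Postnikov tower choosing at each stage a formal model for the coherent homotopy sheaf via \cite[Theorem 7.3]{Hennion_Porta_Vezzosi_Formal_gluing}. Your outline reconstructs this strategy correctly in its broad lines, including the correct identification of the extension module $\pi_{n+1}(\cO_X\alg)[n+1]$ and the role of the comparison $(\adL_{\fX})^\rig \simeq \anL_{\fX^\rig}$ of \cref{thm:adic_and_analytic_cotangent_complex}.

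There is, however, a genuine misstep at precisely the point you flag as the hard one in part (3). You propose to show that ``rigidification induces a surjection on $\pi_0$ of the appropriate mapping spaces of derivations,'' and this is false as stated. The base-change behaviour of mapping spaces (cf.\ \cref{cor:base_change_hom}) gives
\[ \pi_0 \Hom_{\fX_n}\bigl( \adL_{\fX_n}, \widetilde{\cF}[n+2] \bigr) \otimes_{\kc} k \simeq \pi_0 \Hom_{\tau_{\le n} X}\bigl( \anL_{\tau_{\le n}X}, \cF[n+2] \bigr) , \]
and the localization map $M \to M \otimes_{\kc} k$ is not surjective: an arbitrary analytic derivation $\alpha$ classifying $\tau_{\le n+1}X \to \tau_{\le n}X$ need not lift to the formal model; only $t^{\mathbf m}\alpha$ lifts, for a suitable $\mathbf m \in \mathbb N^n$ (this is exactly the content of \cref{lift_p_maps}, and the same phenomenon forces the detour in the proof of \cref{thm:formal_model_flat_map} in this paper). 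The induction can be repaired, but it requires an additional idea absent from your sketch: since each $t_i \in \mathfrak m$ is invertible in $k$, multiplication by $t^{-\mathbf m}$ is an automorphism of $\cF$, and it intertwines the derivation $t^{\mathbf m}\alpha$ with $\alpha$; hence the square-zero extension of $\fX_n$ built from a formal model of $t^{\mathbf m}\alpha$ still has generic fiber equivalent to $\tau_{\le n+1}X$. One also needs the freedom to re-choose the formal model $\widetilde{\cF}$ of $\cF$ compatibly with the derivation, which is where filteredness of the category of formal models (the underived avatar of \cref{thm:formal_models_filtered}) enters. Finally, a minor directional slip: the Postnikov tower is exhausted geometrically by a \emph{colimit} $\fX \simeq \colim_n \fX_n$ (constant underlying $\infty$-topos, inverse limit of structure sheaves), not an inverse limit of formal stacks, so the convergence check at the end should be phrased at the level of structure sheaves.
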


Fix a derived formal \DM stack $\fX \coloneqq (\cX, \cO_{\fX})$ and a derived \kanal space $Y \coloneqq (\cY, \cO_Y)$.
We set
\[ \cO_{\fX} \textrm{-} \Mod \coloneqq \cO_{\fX}\alg \textrm{-} \Mod \quad , \quad \cO_Y \textrm{-} \Mod \coloneqq \cO_Y\alg \textrm{-} \Mod . \]
We refer to $\cO_{\fX} \textrm{-} \Mod$ as the \emph{stable $\infty$-category of $\cO_{\fX}$-modules}.
Similarly, we refer to $\cO_Y \textrm{-} \Mod$ as the \emph{stable $\infty$-category of $\cO_Y$-modules}.
The derived generic fiber functor induces a functor
\[ \rigg \colon \cO_{\fX} \textrm{-} \Mod \longrightarrow \cO_{\fX^\rig} \textrm{-} \Mod . \]

\begin{defin}
	Let $\fX \in \dfDM_{\kc}$ be a derived $\kc$-adic \DM stack and let $X \in \dAnk$ be a derived \kanal space.
	The $\infty$-category $\Coh^+(\fX)$ (resp.\ $\Coh^+(X)$) of almost perfect complexes on $\fX$ (resp.\ on $X$) is the full subcategory of $\cO_{\fX} \textrm{-} \Mod$ (resp.\ of $\cO_X \textrm{-} \Mod$) spanned by those $\cO_{\fX}$-modules (resp.\ $\cO_X$-modules) $\cF$ such that $\pi_i( \cF )$ is a coherent sheaf on $\trunc(\fX)$ (resp.\ on $\trunc(X)$) for every $i \in \mathbb Z$ and $\pi_i(\cF) \simeq 0$ for $i \ll 0$.
\end{defin}

For later use, let us record the following result:

\begin{prop}[{\cite{Lurie_SAG} \& \cite[Theorem 3.4]{Porta_Yu_Mapping}}] \label{prop:modules_on_affines_and_affinoid}
	Let $\fX$ be a derived affine $\kc$-adic scheme.
	Let $A \coloneqq \Gamma( \fX; \cO_\fX\alg)$.
	Then the functor $\Gamma(\fX;-)$ restricts to
	\[ \Coh^+( \fX ) \longrightarrow \Coh^+( A ) \]
	and furthermore this is an equivalence.
	Similarly, if $X$ is a derived $k$-affinoid space,\footnote{By definition, $X$ is a derived $k$-affinoid space if $\trunc(X)$ is a $k$-affinoid space.} and $B \coloneqq \Gamma(X; \cO_X\alg)$, then $\Gamma(X;-)$ restricts to
	\[ \Coh^+( X ) \longrightarrow \Coh^+( B ) , \]
	and furthermore this is an equivalence.
\end{prop}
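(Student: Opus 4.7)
The plan is to handle the two parallel statements separately, each reducing to a previously established equivalence. In both cases the global sections functor is well defined from $\cO_\fX$-modules (resp.\ $\cO_X$-modules) to $A$-modules (resp.\ $B$-modules) by the definition of $A$ and $B$, so the real content is the restriction to $\Coh^+$ and its being an equivalence there.

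For the formal case, the strategy is to invoke the affine equivalence from \cite{Lurie_SAG}, Chapter 8: since $\fX$ is a derived affine $\kc$-adic scheme, the global sections functor induces an equivalence between the stable $\infty$-category of quasi-coherent $\cO_\fX\alg$-modules and the stable $\infty$-category of $\mathfrak m$-complete $A$-modules. To cut this down to an equivalence on $\Coh^+$, I would check two things. First, if $M \in \Coh^+(A)$, then $M$ is automatically $\mathfrak m$-complete: $\pi_0(M)$ is finitely generated over $\pi_0(A)$, which is $\mathfrak m$-complete, and the higher homotopy groups are finitely generated $\pi_0(A)$-modules, so the standard derived Nakayama / completeness criterion applies. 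Second, under the equivalence, $\cF \in \cO_\fX\alg$-$\Mod$ has each $\pi_i(\cF)$ coherent on $\trunc(\fX)$ if and only if each $\pi_i(\Gamma(\fX;\cF))$ is a finitely generated $\pi_0(A)$-module; this is immediate because quasi-coherent sheaves on $\trunc(\fX)$ correspond to $\pi_0(A)$-modules and coherence matches finite generation (here using that $\pi_0(A)$ is Noetherian in our situation, as $A$ is topologically almost of finite type over $\kc$).

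For the $k$-affinoid case, I would appeal to \cite[Theorem 3.4]{Porta_Yu_Mapping}. The strategy there proceeds by induction along the Postnikov tower of $\cO_X\alg$. The base case $n=0$ is the classical statement that on an ordinary $k$-affinoid space the global sections functor gives an equivalence between coherent sheaves and finitely generated modules over the affinoid algebra, which is a version of Kiehl's theorem. The inductive step uses the defining condition of a derived \kanal space, namely that each $\pi_i(\cO_X\alg)$ is a coherent $\pi_0(\cO_X\alg)$-module, to describe the passage from $\tau_{\le n-1} \cO_X\alg$ to $\tau_{\le n} \cO_X\alg$ as a square-zero extension whose kernel is controlled, and to transport this square-zero extension compatibly to the module side via $\Gamma$. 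The main obstacle is really in this analytic case: one must verify that the square-zero extensions on both sides are preserved and reflected by $\Gamma$, and that the filtered colimit over the Postnikov tower commutes with the $\Coh^+$ condition. Since this is carried out in detail in loc.\ cit., I would simply quote it rather than reproduce the induction.
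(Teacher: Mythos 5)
The paper offers no argument for this proposition: it is quoted verbatim from \cite{Lurie_SAG} (formal case) and \cite[Theorem 3.4]{Porta_Yu_Mapping} (affinoid case), so your proposal, which defers to exactly these two references and only sketches why they apply, is essentially the same approach as the paper. One inaccuracy in your sketch of the formal case is worth correcting: $\pi_0(A)$ is in general \emph{not} Noetherian. When the valuation on $k$ is not discrete (e.g.\ $k = \C_p$), the ring $\kc$ is non-Noetherian, and topologically (almost) of finite type $\kc$-algebras such as $\kc\{T_1,\dots,T_n\}$ are only \emph{coherent}; so your dictionary should read ``coherent sheaf on $\trunc(\fX)$ corresponds to finitely \emph{presented} (equivalently coherent) $\pi_0(A)$-module,'' rather than matching coherence with finite generation via Noetherianity. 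Similarly, the completeness of $M \in \Coh^+(A)$ should not be routed through a Noetherian hypothesis: instead use that $\mathfrak m$ is finitely generated (a standing assumption of the paper) and that derived $\mathfrak m$-complete modules form a subcategory closed under limits, extensions and quotients, so that finitely generated discrete $\pi_0(A)$-modules --- being quotients of free modules of finite rank over a complete ring --- are complete, and hence so is every almost perfect $A$-module, as the limit of its truncations. With these substitutions your argument goes through and matches the cited results.
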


To complete this short review, we briefly discuss the notion of the $\kc$-adic and \kanal cotangent complexes.
The two theories are parallel, and for sake of brevity we limit ourselves to the first one.
We refer to the introduction of \cite{Porta_Yu_Representability} for a more thorough review of the \kanal theory.

In \cite[\S 3.4]{Antonio_Formal_models} it was constructed a functor
\[ \Omega^\infty_{\mathrm{ad}} \colon \cO_{\fX} \textrm{-} \Mod \longrightarrow \mathrm{fCAlg}_{\kc}(\cX)_{/\cO_\fX} , \]
which we refer to as the \emph{$\kc$-adic split square-zero extension functor}.
Given $\cF \in \cO_{\fX} \textrm{-} \Mod$, we often write $\cO_{\fX} \oplus \cF$ instead of $\Omega^\infty_{\mathrm{ad}}(\cF)$.

\begin{rem}
	Although the $\infty$-category $\cO_{\fX} \textrm{-} \Mod$ is \emph{not} sensitive to the $\cTad$-structure on $\cO_{\fX}$, the functor $\Omega^\infty_{\mathrm{ad}}$ depends on it in an essential way.
\end{rem}

\begin{defin}
	The functor of \emph{$\kc$-adic derivations} is the functor
	\[ \mathrm{Der}^{\mathrm{ad}}_{\kc}(\fX;-) \colon \cO_{\fX} \textrm{-} \Mod \longrightarrow \cS \]
	defined by
	\[ \mathrm{Der}^{\mathrm{ad}}_{\kc}(\fX;\cF) \coloneqq \Map_{\mathrm{fCAlg}_{\kc}(\cX)_{/\cO_\fX}}( \cO_\fX, \cO_\fX \oplus \cF ) . \]
\end{defin}

For formal reasons, the functor $\mathrm{Der}^{\mathrm{ad}}_{\kc}(\fX;-)$ is corepresentable by an object $\adL_\fX \in \cO_{\fX} \textrm{-} \Mod$.
We refer to it as the \emph{$\kc$-adic cotangent complex of $\fX$}.
The following theorem summarizes its main properties:

\begin{thm}[{\cite[Proposition 3.4.4, Corollary 4.3.5, Proposition 3.5.8]{Antonio_Formal_models}}] \label{thm:adic_and_analytic_cotangent_complex}
	Let $\fX \coloneqq (\cX, \cO_\fX)$ be a derived $\kc$-adic \DM stack.
	Let $\mathrm t_{\le n} \fX \coloneqq (\cX, \tau_{\le n} \cO_\fX)$ be the $n$-th truncation of $\fX$.
	Then:
	\begin{enumerate}
		\item the $\kc$-adic cotangent complex $\adL_\fX$ belongs to $\Coh^+(\fX)$;
		
		\item in $\Coh^+( \fX^\rig )$ there is a canonical equivalence
		\[ ( \adL_\fX )^\rig \simeq \anL_{\fX^\rig} , \]
		where $\anL_{\fX^\rig}$ denotes the analytic cotangent complex of the derived \kanal space $\fX^\rig$;
		
		\item the algebraic derivation classifying canonical map $(\fX, \tau_{\le n+1} \cO_\fX) \to (\fX, \tau_{\le n} \cO_\fX)$ can be canonically lifted to a $\kc$-adic derivation
		\[ \adL_{\mathrm t_{\le n} \fX} \longrightarrow \pi_{n+1}( \cO_\fX )[n+2] . \]
	\end{enumerate}
\end{thm}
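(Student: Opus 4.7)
The strategy is to establish the three assertions in sequence, working \emph{locally} on $\fX$ and exploiting compatibility of all the constructions with the underlying algebra functor $(-)\alg \colon \mathrm{fCAlg}_{\kc}(\cX) \to \CAlg_{\kc}(\cX)$. By \cref{prop:modules_on_affines_and_affinoid} and the Zariski-local nature of the cotangent complex, I can reduce all three items to the case where $\fX = \Spf(A)$ with $A$ a topologically almost of finite presentation derived $\kc$-algebra.

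For (1), the plan is to lift the standard argument for cotangent complexes of almost finitely presented simplicial commutative rings. Locally, $A$ is a retract of a completion $B = \kc\{T_1,\ldots,T_n\}[s_1,\ldots,s_m]$ with adjoined cells, and for such a $B$ the adic cotangent complex is visibly free of finite rank (generated by the formal differentials $dT_i$ and $ds_j$). Then the transitivity fiber sequence $\adL_{\kc}\otimes_{\kc} A \to \adL_A \to \adL_{A/\kc}$ together with induction on the Postnikov tower of $A$ shows that $\adL_A$ is almost perfect; the key technical point is that since $A$ is $\mathfrak m$-complete, the adic derivation functor agrees with the naive derivation functor on the finite stages, so the usual algebraic computation transports over.

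For (2), the idea is to use the adjunction $\rigg \dashv (-)^+$ together with compatibility of the split square-zero extension with the rigidification. Concretely, if $\cF \in \Coh^+(\fX)$, then the canonical comparison morphism $(\cO_\fX \oplus \cF)^\rig \to \cO_{\fX^\rig} \oplus \cF^\rig$ is an equivalence, because both sides corepresent the same space of derivations after rigidifying. By \cref{thm:intro_cat_formal_model}, every object in $\Coh^+(\fX^\rig)$ arises (up to equivalence) as the generic fiber of some $\cF \in \Coh^+(\fX)$, so naturality of the corepresenting objects forces $(\adL_\fX)^\rig \simeq \anL_{\fX^\rig}$. This step should be essentially formal once the generic fiber is known to intertwine the two structure functors.

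For (3), I would adapt the classical obstruction-theoretic description of the canonical truncation to the adic setting. The fiber sequence
\[
\pi_{n+1}(\cO_\fX)[n+1] \longrightarrow \tau_{\le n+1}\cO_\fX \longrightarrow \tau_{\le n}\cO_\fX
\]
exhibits $\tau_{\le n+1}\cO_\fX$ as a square-zero extension of $\tau_{\le n}\cO_\fX$ by $\pi_{n+1}(\cO_\fX)[n+1]$, classified algebraically by a derivation $\adL_{\mathrm{t}_{\le n}\fX}\alg \to \pi_{n+1}(\cO_\fX)[n+2]$. The content of (3) is that this derivation lifts canonically to one in $\mathrm{fCAlg}_{\kc}$, i.e.\ that the square-zero extension is not merely an extension of $\kc$-algebras but of $\kc$-\emph{adic} algebras. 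The \textbf{main obstacle} will be verifying this lift: one must show that $\tau_{\le n+1}\cO_\fX$, equipped with its natural $\cTad$-structure, is genuinely a square-zero extension in $\mathrm{fCAlg}_{\kc}$, which requires checking that the $\cTad$-structure on the truncation is the one induced by the square-zero construction. The cleanest route is to construct the $\cTad$-split square-zero extension $\cO_\fX \oplus \pi_{n+1}(\cO_\fX)[n+1]$ directly and identify the universal property of $\tau_{\le n+1}\cO_\fX$ in $\mathrm{fCAlg}_{\kc}(\cX)_{/\tau_{\le n}\cO_\fX}$ with the one it satisfies in $\CAlg_{\kc}(\cX)_{/\tau_{\le n}\cO_\fX\alg}$, using that completeness is preserved under the relevant truncations.
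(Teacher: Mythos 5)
First, a point of order: the paper itself contains no proof of this theorem --- it is imported wholesale from \cite[Proposition 3.4.4, Corollary 4.3.5, Proposition 3.5.8]{Antonio_Formal_models} --- so your sketch can only be measured against the argument that reference must supply. Measured that way, parts (1) and (3) of your outline have the right shape, but part (2) has a genuine gap, and it sits exactly at the step you dismiss as ``essentially formal''. Essential surjectivity of $\rigg$ on $\Coh^+$ (your appeal to \cref{thm:intro_cat_formal_model}) does not transfer corepresentability: to conclude $(\adL_\fX)^\rig \simeq \anL_{\fX^\rig}$ you need, first, a \emph{canonical comparison map}, and second, an identification of mapping spaces out of the rigidified object. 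For the first, you invoke the equivalence $(\cO_\fX \oplus \cF)^\rig \simeq \cO_{\fX^\rig} \oplus \cF^\rig$ and justify it ``because both sides corepresent the same space of derivations after rigidifying'' --- but the comparison of adic and analytic derivation spaces is precisely what assertion (2) amounts to, so as written the key step is circular. For the second, note that $\rigg$ is a localization, not fully faithful, so $\Map_{\fX^\rig}((\adL_\fX)^\rig, \cF^\rig)$ is not a mapping space upstairs for free; one needs something like the base-change formula $\Hom_{\fX^\rig}(\cF^\rig,\cG^\rig) \simeq \Hom_\fX(\cF,\cG)\otimes_{\kc}k$ of \cref{cor:base_change_hom}. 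Moreover, both \cref{thm:formal_models_filtered} and \cref{cor:base_change_hom} are proved in this paper only for quasi-compact quasi-separated derived $\kc$-adic \emph{schemes}, whereas the theorem is stated for derived $\kc$-adic \DM stacks; your route does not cover the stated generality. (The forward reference itself is not circular --- the formal-models machinery of Section 3 nowhere uses the cotangent complex theorem --- but it is both insufficient and unnecessary.) The repair is to construct the comparison map directly from the construction of $\rigg$ and of the adic split square-zero functor $\Omega^\infty_{\mathrm{ad}}$, and then check it is an equivalence \'etale-locally via \cref{prop:modules_on_affines_and_affinoid}, where both sides are explicit; this local check also removes the scheme-versus-stack mismatch.

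Two smaller defects. In (1), your transitivity sequence $\adL_{\kc}\otimes_{\kc}A \to \adL_A \to \adL_{A/\kc}$ is malformed: $\adL_\fX$ as defined in the paper is already the $\kc$-adic (i.e., relative-to-$\kc$) cotangent complex, so the absolute term $\adL_{\kc}$ has no role; what you want is the transitivity sequence for a presentation $\kc\{T_1,\dots,T_n\} \to B \to A$. More seriously, the sentence ``since $A$ is $\mathfrak m$-complete, the adic derivation functor agrees with the naive derivation functor on the finite stages'' is stated as if it were automatic, while it is the actual technical content of the local computation (this is where $\mathfrak m$-completeness of $A^{\mathrm{ad}}$ and finite generation of the $\pi_i$ enter, and where the adic structure on $\Omega^\infty_{\mathrm{ad}}$, which the paper explicitly warns depends essentially on the $\cTad$-structure, must be confronted); it needs an argument, not an appeal. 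Your outline of (3), by contrast, correctly isolates the real obstacle --- showing that the algebraic square-zero extension $\tau_{\le n+1}\cO_\fX \to \tau_{\le n}\cO_\fX$ refines to one in $\mathrm{fCAlg}_{\kc}(\cX)$ by matching universal properties in $\mathrm{fCAlg}_{\kc}(\cX)_{/\tau_{\le n}\cO_\fX}$ and $\CAlg_{\kc}(\cX)_{/\tau_{\le n}\cO_\fX\alg}$ --- and that is indeed what the cited Proposition 3.5.8 must carry out.
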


\section{Formal models for almost perfect complexes}

\subsection{Formal descent statements}

We assume that $\kc$ admits a finitely generated ideal of definition $\mathfrak m$.
We also fix a set of generators $t_1, \ldots, t_n \in \mathfrak m$.
We start by recalling the notion of $\mathfrak m$-nilpotent almost perfect complexes.

\begin{defin}
	Let $\fX$ be a derived $\kc$-adic \DM stack topologically almost of finite presentation.
	We let $\Coh^+_{\mathrm{nil}}( \fX )$ denote the fiber of the generic fiber functor \eqref{eq:generic_fiber_coherent_sheaves}:
	\[ \Coh^+_{\mathrm{nil}}( \fX ) \coloneqq \fib\left( \Coh^+( \fX ) \xrightarrow{\rigg} \Coh^+( \fX^\rig )  \right) . \]
	We refer to $\Coh^+_{\mathrm{nil}}(\fX)$ as the full subcategory of $\mathfrak m$-nilpotent almost perfect complexes on $X$.
\end{defin}

A morphism $\ff \colon \fX \to \fY$ in $\dfDM_{\kc}^{\mathrm{taft}}$ induces a commutative diagram
\begin{equation} \label{eq:naturality_rigg}
	\begin{tikzcd}
		\Coh^+(\fY) \arrow{r}{\ff^*} \arrow{d}{\rigg} & \Coh^+( \fX ) \arrow{d}{\rigg} \\
		\Coh^+( \fY^\rig ) \arrow{r}{(\ff^\rig)^*} & \Coh^+( \fX^\rig ) .
	\end{tikzcd}
\end{equation}
In particular, we see that $\ff^*$ preserves the subcategory of $\mathfrak m$-nilpotent almost perfect complexes on $X$.
Moreover, as both $\Coh^+( \fX )$ and $\Coh^+( \fX^\rig )$ satisfy \'etale descent, we conclude that $\Coh^+_{\mathrm{nil}}(\fX)$ satisfies \'etale descent as well.

\begin{lem} \label{lem:characterization_nilpotent_almost_perfect_modules}
	Let $\fX$ be a derived $\kc$-adic \DM stack.
	Then an almost perfect sheaf $\cF \in \Coh^+(X)$ is $\mathfrak m$-nilpotent if and only if for every $i \in \mathbb Z$ the coherent sheaf $\pi_i(\cF)$ is annihilated by some power of the ideal $\mathfrak m$.
\end{lem}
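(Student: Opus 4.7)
The plan is to reduce the statement to the classical vanishing criterion for coherent sheaves on an ordinary formal scheme, in two stages. First, I would observe that both sides of the claimed equivalence are \'etale-local on $\fX$: the assignment $\fX \mapsto \Coh^+_{\mathrm{nil}}(\fX)$ inherits \'etale descent from the naturality square \eqref{eq:naturality_rigg} combined with the \'etale descent of $\Coh^+(-)$ and $\Coh^+((-)^\rig)$ (as remarked in the paragraph following the definition of $\Coh^+_{\mathrm{nil}}$), while for a coherent sheaf $\cG$ on $\trunc(\fX)$ the condition of being annihilated by some power of $\mathfrak m$ is clearly local on $\fX$. I may therefore choose an \'etale cover of $\fX$ by affine derived $\kc$-adic formal schemes $\Spf(A)$ with $A$ topologically almost of finite presentation, and apply \cref{prop:modules_on_affines_and_affinoid} to replace $\Coh^+(\fX)$ and $\Coh^+(\fX^\rig)$ by $\Coh^+(A)$ and $\Coh^+(A^\rig)$ respectively, where $A^\rig$ denotes the derived affinoid $k$-algebra obtained from $A$ by inverting the generators $t_1,\dots,t_n$ of $\mathfrak m$.

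Next, I would show that the generic fiber functor $(-)^\rig \colon \Coh^+(A) \to \Coh^+(A^\rig)$ commutes with the formation of homotopy sheaves. On the affine side, $(-)^\rig$ is realized by base change along the flat derived localization $A \to A^\rig$, the flatness of which is part of the derived formal models theory of \cite{Antonio_Formal_models} recalled in \cref{prop21}. Granted this, $\pi_i(\cF^\rig) \simeq \pi_i(\cF)^\rig$ for every $i \in \mathbb Z$. Since $\cF$ is bounded below and an almost perfect complex is determined by its homotopy sheaves, $\cF^\rig \simeq 0$ if and only if $\pi_i(\cF)^\rig \simeq 0$ for every $i$, reducing the problem to the case of a coherent sheaf.

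Finally, for a finitely generated $\pi_0(A)$-module $M$ corresponding to a coherent sheaf on $\trunc(\fX)$, one has $M \otimes_A A^\rig \simeq 0$ if and only if $M$ is annihilated by some power of $\mathfrak m$: this is immediate from the identification of $A^\rig$ with the localization inverting the $t_i$ combined with the finite generation of $M$, which allows one to pass from pointwise nilpotency to global nilpotency. I expect the only delicate step to be the compatibility of $(-)^\rig$ with homotopy sheaves in the derived setting; the remaining ingredients — \'etale descent, the affine comparison of \cref{prop:modules_on_affines_and_affinoid}, and the classical nilpotency criterion for coherent sheaves — are either formal or well-known.
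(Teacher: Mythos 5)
Your proposal is correct and follows essentially the same route as the paper: \'etale localization to the affine case, the identifications of \cref{prop:modules_on_affines_and_affinoid}, and the observation that $\rigg$ becomes the $t$-exact base change $-\otimes_A B$ with $B \simeq A \otimes_{\kc} k$ (the paper cites \cite[Proposition A.1.4]{Antonio_Formal_models} for this identification and \cite[Corollary 4.1.3]{Antonio_Formal_models} to know $\fX^\rig$ is derived $k$-affinoid, which are exactly the inputs you invoke as ``the derived formal models theory''). Your phrasing via flatness of $A \to A^\rig$ and commutation with homotopy sheaves is just a reformulation of the paper's $t$-exactness argument, and the concluding nilpotency criterion for finitely generated $\pi_0(A)$-modules matches the paper's ``straightforward'' final step.
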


\begin{proof}
	The question is \'etale local on $\fX$.
	In particular, we can assume that $\fX$ is a derived formal affine scheme topologically of finite presentation.
	Write
	\[ A \coloneqq \Gamma( \fX, \cO_\fX\alg ) . \]
	Let $X \coloneqq \fX^\rig$.
	Then \cite[Corollary 4.1.3]{Antonio_Formal_models} shows that
	\[ \trunc(\fX^\rig) \simeq ( \trunc(\fX) )^\rig . \]
	In particular, we deduce that $X$ is a derived $k$-affinoid space.
	Write
	\[ B \coloneqq \Gamma( X, \cO_X\alg ) . \]
	We can therefore use \cref{prop:modules_on_affines_and_affinoid} to obtain canonical equivalences
	\[ \Coh^+( \fX ) \simeq \Coh^+( A\alg ) \quad , \quad \Coh^+( X ) \simeq \Coh^+( B ) . \]
	Under these identifications, the functor $\rigg$ becomes equivalent to the base change functor
	\[ - \otimes_A B \colon \Coh^+(A) \longrightarrow \Coh^+(B) . \]
	Moreover, it follows from \cite[Proposition A.1.4]{Antonio_Formal_models} that there is a canonical identification
	\[ B \simeq A \otimes_{\kc} k . \]
	In particular, $\rigg \colon \Coh^+( \fX ) \to \Coh^+( X )$ is $t$-exact.
	The conclusion is now straightforward.
\end{proof}

\begin{defin}
	Let $\fX$ be a derived $\kc$-adic \DM stack.
	Let $\cF \in \Coh^+(\fX^\rig)$.
	We say that $\mathfrak F \in \Coh^+(\fX)$ is a formal model for $\cF$ if there exists an equivalence $\fF^\rig \simeq \cF$ in $\Coh^+(\fX^\rig)$.
	We let $\FormalModels(\cF)$ denote the full subcategory of
	\[ \Coh^+(\fX)_{/\cF} \coloneqq \Coh^+( \fX ) \times_{\Coh^+( \fX^\rig )} \Coh^+( \fX^\rig )_{/\cF} \]
	spanned by formal models of $\cF$.
\end{defin}

Our goal in this section is to study the structure of $\FormalModels(\cF)$, and in particular to establish that it is non-empty and filtered when $\fX$ is a quasi-compact and quasi-separated derived $\kc$-adic scheme.
Notice that saying that $\FormalModels(\cF)$ is non-empty for every choice of $\cF \in \Coh^+(X)$ is equivalent to asserting that the functor \eqref{eq:generic_fiber_coherent_sheaves}
\[ \rigg \colon \Coh^+( \fX ) \longrightarrow \Coh^+( X ) \]
is essentially surjective.

\begin{lem} \label{lem:existence_formal_models_affine}
	If $\fX$ is a derived $\kc$-affine scheme topologically almost of finite presentation, then the functor \eqref{eq:generic_fiber_coherent_sheaves} is essentially surjective.
\end{lem}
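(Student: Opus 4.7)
The plan is to translate the statement to an algebraic assertion via \cref{prop:modules_on_affines_and_affinoid}: writing $A \coloneqq \Gamma(\fX; \cO_\fX\alg)$ and $B \simeq A \otimes_{\kc} k$, the lemma becomes the essential surjectivity of $-\otimes_A B \colon \Coh^+(A) \to \Coh^+(B)$. Fix $\cF \in \Coh^+(B)$; up to a shift we may assume that $\cF$ is connective, and the task is then to construct $\fF \in \Coh^+(A)$ with $\fF \otimes_A B \simeq \cF$.

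I would proceed by Postnikov induction, building a compatible tower $\{\fF_n\}_{n \ge 0}$, with each $\fF_n$ an $n$-truncated formal model of $\tau_{\le n} \cF$, and then setting $\fF \coloneqq \lim_n \fF_n$. The base case is provided by the classical theorem of Bosch-L\"utkebohmert-Raynaud on formal models of coherent sheaves on $k$-affinoids: $\pi_0(\cF)$ is a coherent sheaf on $\trunc(\fX^\rig) \simeq \trunc(\fX)^\rig$ and hence admits an underived formal model $\fF_0$ over $\pi_0(A)$; this is the underived case of \cite[Theorem 7.3]{Hennion_Porta_Vezzosi_Formal_gluing}. For the inductive step, given $\fF_n$ with $\fF_n^\rig \simeq \tau_{\le n} \cF$, the fiber sequence $\pi_{n+1}(\cF)[n+1] \to \tau_{\le n+1} \cF \to \tau_{\le n} \cF$ is classified by a $k$-invariant $\alpha_n \colon \tau_{\le n} \cF \to \pi_{n+1}(\cF)[n+2]$. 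After producing a formal model $\fG_{n+1}$ of $\pi_{n+1}(\cF)$ via the base case, I would lift $\alpha_n$ to a map $\talpha_n \colon \fF_n \to \fG_{n+1}[n+2]$ and define $\fF_{n+1}$ as its shifted fiber, which is a formal model of $\tau_{\le n+1} \cF$ satisfying $\tau_{\le n} \fF_{n+1} \simeq \fF_n$. The limit $\fF \coloneqq \lim_n \fF_n$ then has eventually-stable coherent homotopy groups, so $\fF \in \Coh^+(A)$, and the flatness of $A \to B$ yields $\fF \otimes_A B \simeq \cF$.

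The hard step is the lifting of the $k$-invariants in the inductive step: a class in $\Ext^{n+2}_B(\tau_{\le n} \cF, \pi_{n+1}(\cF))$ does not in general lift to $\Ext^{n+2}_A(\fF_n, \fG_{n+1})$, but only after clearing a denominator $t^m$. I would exploit that $\fF_n$ and $\fG_{n+1}$ are almost perfect over the Noetherian ring $\pi_0(A)$, so that each $\Ext^*_A(\fF_n, \fG_{n+1})$ is a finitely generated $\pi_0(A)$-module and satisfies
\[ \Ext^*_A(\fF_n, \fG_{n+1}) \otimes_{\pi_0(A)} \pi_0(B) \simeq \Ext^*_B(\tau_{\le n} \cF, \pi_{n+1}(\cF)) ; \]
consequently $t^m \alpha_n$ admits a lift $\talpha_n'$ for $m$ sufficiently large, and since $t$ acts invertibly on $B$, the fiber of $\talpha_n'$ is still a formal model of $\tau_{\le n+1} \cF$. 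This is the affine-case incarnation of the classical ``lifting morphisms modulo $t^m$'' principle that underlies formal-model theory on affinoids, and it extends to the derived setting using only the Noetherian finiteness just invoked.
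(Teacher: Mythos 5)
Your opening reduction via \cref{prop:modules_on_affines_and_affinoid} to the essential surjectivity of $-\otimes_A B \colon \Coh^+(A) \to \Coh^+(B)$ is precisely the paper's first step, but after that the two routes diverge. The paper simply observes that $B \simeq A \otimes_{\kc} k$ exhibits $A \to B$ as a Zariski open immersion and quotes \cite[Theorem 2.12]{Hennion_Porta_Vezzosi_Formal_gluing}, which already contains the essential surjectivity statement for such immersions; you instead re-prove this special case by hand, via a Postnikov induction with the classical lattice argument of Bosch--L\"utkebohmert--Raynaud in the heart and lifting of $k$-invariants after multiplication by $t^{\mathbf m}$. This is a legitimate, self-contained alternative, and your ``clear a denominator, then use that $t^{\mathbf m}$ acts invertibly over the generic fiber to identify the fibers of $\alpha_n$ and $t^{\mathbf m}\alpha_n$'' maneuver is exactly the mechanism the paper itself deploys later, in \cref{lift_p_maps} and in the proof of \cref{thm:formal_model_flat_map}. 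What your route buys is independence from the Verdier-quotient formalism of loc.\ cit.; what it costs is the $t$-structure bookkeeping, some of which you should make explicit: that $\tau_{\le n}\fib(\talpha_n') \simeq \fF_n$ (long exact sequence of the fiber sequence), and that the tower $\{\fF_n\}$ has homotopy groups stabilizing in each degree, so $\lim^1$ vanishes, $\fF = \lim_n \fF_n$ lies in $\Coh^+(A)$, and the comparison map $\fF \otimes_A B \to \lim_n (\tau_{\le n}\cF) \simeq \cF$ is an equivalence degreewise by flatness of $A \to B$.

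There is, however, one genuinely incorrect justification: $\pi_0(A)$ is \emph{not} Noetherian in general. It is topologically of finite presentation over $\kc$, and $\kc$ is Noetherian only when the valuation is discrete; the paper's standing hypotheses (rank $1$ valuation with finitely generated ideal of definition) allow for instance $k = \C_p$, where neither $\kc$ nor $\kc\langle T_1, \ldots, T_m\rangle$ is Noetherian, so ``$\Ext^*_A(\fF_n, \fG_{n+1})$ is a finitely generated module over the Noetherian ring $\pi_0(A)$'' fails as stated. Fortunately, finiteness is not what the lifting step needs. Since the valuation has rank $1$, one has $k \simeq \kc[1/t]$ for any nonzero $t \in \mathfrak m$, so $\fG_{n+1} \otimes_{\kc} k \simeq \colim\big( \fG_{n+1} \xrightarrow{t} \fG_{n+1} \xrightarrow{t} \cdots \big)$; since $\fF_n$ is almost perfect, $\Map_A(\fF_n, -)$ commutes with this filtered colimit of uniformly bounded-below modules, giving $\Ext^{n+2}_A(\fF_n, \fG_{n+1}) \otimes_{\kc} k \simeq \Ext^{n+2}_B(\tau_{\le n}\cF, \pi_{n+1}(\cF))$ directly --- this is the affine case of \cref{cor:base_change_hom}, whose proof in that case does not use the present lemma, so there is no circularity --- and hence every class over $B$ becomes liftable after multiplication by $t^m$. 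Likewise, in the base case you should appeal to the coherence of topologically finitely presented $\kc$-algebras (as in the classical theory) rather than Noetherianity to know the image lattice is finitely presented. With these substitutions your induction goes through; in the discretely valued case your argument is correct as written.
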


\begin{proof}
	We let
	\[ A \coloneqq \Gamma( \fX, \cO_\fX\alg ) \quad , \quad B \coloneqq \Gamma( \fX^\rig, \cO_{\fX^\rig} ) . \]
	Then as in the proof of \cref{lem:characterization_nilpotent_almost_perfect_modules}, we have identifications $\Coh^+(\fX) \simeq \Coh^+(A)$ and $\Coh^+( \fX^\rig ) \simeq \Coh^+(B)$, and under these identifications the functor $\rigg$ becomes equivalent to
	\[ - \otimes_A B \colon \Coh^+(A) \longrightarrow \Coh^+(B) . \]
	As $B \simeq A \otimes_{\kc} k$, we see that $A \to B$ is a Zariski open immersion.
	The conclusion now follows from \cite[Theorem 2.12]{Hennion_Porta_Vezzosi_Formal_gluing}.
\end{proof}

To complete the proof of the non-emptiness of $\FormalModels(\cF)$, it would be enough to know that the essential image of the functor $\Coh^+(\fX) \to \Coh^+(\fX^\rig)$ satisfies descent.
This is analogous to \cite[Theorem 7.3]{Hennion_Porta_Vezzosi_Formal_gluing}.

\begin{defin}
	Let $\fX$ be a derived $\kc$-adic \DM stack locally topologically almost of finite presentation.
	We define the stable $\infty$-category $\Coh^+_{\mathrm{loc}}( \fX )$ of \emph{$\mathfrak m$-local almost perfect complexes} as the cofiber
	\[ \Coh^+_{\mathrm{loc}}( \fX ) \coloneqq \cofib \left( \Coh^+_{\mathrm{nil}}( \fX ) \hookrightarrow \Coh^+( \fX ) \right) . \]
	We denote by $\rL \colon \Coh^+(\fX) \to \Coh^+_{\mathrm{loc}}(\fX)$ the canonical functor.
	We refer to $\rL$ as the \emph{localization functor}.
\end{defin}

We summarize below the formal properties of $\mathfrak m$-local almost perfect complexes:

\begin{prop} \label{prop:local_almost_perfect_complexes_formal_properties}
	Let $\fX$ be a derived $\kc$-adic \DM-stack locally topologically almost of finite presentation.
	Then:
	\begin{enumerate}
		\item there exists a unique $t$-structure on the stable $\infty$-category $\Coh^+_{\mathrm{loc}}( \fX )$ having the property of making the localization functor
		\[ \rL \colon \Coh^+(\fX) \longrightarrow \Coh^+_{\mathrm{loc}}(\fX) \]
		$t$-exact.
		
		\item The functor $\rigg \colon \Coh^+(\fX) \to \Coh^+(\fX^\rig)$ factors as
		\[ \Lambda \colon \Coh^+_{\mathrm{loc}}( \fX ) \longrightarrow \Coh^+( \fX^\rig ) . \]
		Moreover, the essential images of $\rigg$ and $\Lambda$ coincide.
		
		\item If $\fX$ is affine, then the functor $\Lambda$ is an equivalence.
	\end{enumerate}
\end{prop}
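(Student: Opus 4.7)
For part (1), the plan is to invoke the general principle that if $\cD \hookrightarrow \cC$ is a stable subcategory closed under the truncation functors of a $t$-structure on $\cC$, then the Verdier quotient $\cC/\cD$ inherits a unique $t$-structure making the quotient functor $t$-exact, with $(\cC/\cD)_{\ge 0}$ and $(\cC/\cD)_{\le 0}$ given by the essential images of $\cC_{\ge 0}$ and $\cC_{\le 0}$. The hypothesis holds here: \cref{lem:characterization_nilpotent_almost_perfect_modules} shows that $\cF \in \Coh^+(\fX)$ is $\mathfrak m$-nilpotent if and only if each $\pi_i(\cF)$ is annihilated by some power of $\mathfrak m$, a condition which is clearly stable under the truncation functors $\tau_{\ge n}$ and $\tau_{\le n}$. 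Uniqueness is then a consequence of the essential surjectivity of $\rL$.

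Part (2) is essentially formal. Since by definition $\Coh^+_{\mathrm{nil}}(\fX) = \fib(\rigg)$, the restriction of $\rigg$ to $\Coh^+_{\mathrm{nil}}(\fX)$ is null-homotopic, so the universal property of the cofiber in $\Catst$ yields the factorization $\Lambda$. The essential images coincide because $\rL$ is essentially surjective, every Verdier/Dwyer--Kan localization of stable $\infty$-categories being identity-on-objects.

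For part (3), since $\mathfrak m$ is a finitely generated ideal in the rank-one valuation ring $\kc$ it is necessarily principal; let $t$ be a generator, so that $k = \kc[t^{-1}]$. Setting $A \coloneqq \Gamma(\fX, \cO_\fX\alg)$ and $B \coloneqq A \otimes_{\kc} k \simeq A[t^{-1}]$, \cref{prop:modules_on_affines_and_affinoid} identifies $\rigg$ with base change $- \otimes_A B \colon \Coh^+(A) \to \Coh^+(B)$. Essential surjectivity of $\Lambda$ is then exactly \cref{lem:existence_formal_models_affine}. For fully faithfulness, I would compute mapping spaces in the Verdier quotient as a filtered colimit
\[ \Map_{\Coh^+_{\mathrm{loc}}(\fX)}(\cF, \cG) \simeq \colim_n \Map_{\Coh^+(A)}(\cF, \cG), \]
with transition maps given by multiplication by $t$ on $\cG$; cofinality of this tower follows because the fiber of $\cG \xrightarrow{t^n} \cG$ lies in $\Coh^+_{\mathrm{nil}}(A)$, while conversely any map $\cG \to \cG'$ with $\mathfrak m$-nilpotent cofiber is dominated by some power of $t$. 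Almost compactness of $\cF \in \Coh^+(A)$ then allows the colimit to commute past $\Map_{\Coh^+(A)}(\cF,-)$, producing $\Map_A(\cF, \cG[t^{-1}]) \simeq \Map_B(\cF \otimes_A B, \cG \otimes_A B)$.

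The main obstacle is fully faithfulness in part (3). The $t$-structure construction of (1) and the factorization in (2) are formal manipulations with Verdier quotients. In contrast, (3) requires the genuine identification of quotient mapping spaces with mapping spaces after inverting $t$; this uses both the almost compactness of objects of $\Coh^+(A)$ and the concrete realization of the derived generic fiber as an algebraic localization in the affine case.
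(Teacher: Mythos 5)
Your proposal is correct in substance, but it takes a genuinely different route from the paper on two of the three parts, so let me compare. For (1), the paper invokes \cite[Corollary 2.9]{Hennion_Porta_Vezzosi_Formal_gluing}, which requires checking that the $t$-structure restricts to $\Coh^+_{\mathrm{nil}}(\fX)$ (done by verifying $t$-exactness of $\rigg$ \'etale-locally via \cref{prop:modules_on_affines_and_affinoid}) \emph{and} that the inclusion of hearts $\Cohh_{\mathrm{nil}}(\fX) \hookrightarrow \Cohh(\fX)$ admits a right adjoint with monomorphic counit (the $\mathfrak m$-torsion subsheaf, after reducing to the underived case). Your appeal to the classical criterion --- a stable subcategory closed under truncations induces a unique $t$-structure on the Verdier quotient with $t$-exact projection, aisle and coaisle the essential images --- is valid and slightly more economical for the bare statement, with closure under truncations immediate from \cref{lem:characterization_nilpotent_almost_perfect_modules}; the paper's stronger hypotheses buy extra control of the heart that you do not need here. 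Part (2) coincides with the paper's argument. For (3) the paper simply cites \cref{prop:modules_on_affines_and_affinoid} together with \cite[Theorem 2.12]{Hennion_Porta_Vezzosi_Formal_gluing}, whereas you reprove full faithfulness by hand via the roof calculus; note, however, that your essential surjectivity step still passes through \cref{lem:existence_formal_models_affine}, whose proof is itself the same HPV theorem, so you have only re-derived the fully faithful half. (Your reduction to a principal ideal is fine: $\kc$ is a valuation ring, so the finitely generated $\mathfrak m$ is principal and $k = \kc[t^{-1}]$.)

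The one step you should not wave through is the cofinality claim that ``any map $\cG \to \cG'$ with $\mathfrak m$-nilpotent cofiber is dominated by some power of $t$.'' This is exactly where unboundedness above bites --- it is the same subtlety the paper flags in the remark following \cref{cor:base_change_hom} and in \cite[Lemma 6.5.3.7]{Lurie_SAG} --- and it needs an argument: given $s \colon \cG \to \cG'$ with fiber sequence $Q[-1] \xrightarrow{u} \cG \xrightarrow{s} \cG'$ and $Q$ nilpotent, the obstruction to factoring $t^n \colon \cG \to \cG$ through $s$ is the class $t^n u \in \pi_0 \Hom_A(Q[-1], \cG)$, so you must know that every homotopy class of maps out of a nilpotent almost perfect complex into a bounded-below one is killed by a power of $t$, \emph{without} a uniform bound. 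This does hold: since the tower $\cG \xrightarrow{t} \cG \xrightarrow{t} \cdots$ is uniformly bounded below and $Q[-1]$ is almost perfect, $\Hom_A(Q[-1], -)$ commutes with this colimit (the same almost-compactness you invoke later), giving $\Hom_A(Q[-1], \cG)[t^{-1}] \simeq \Hom_B(Q[-1] \otimes_A B, \cG \otimes_A B) \simeq 0$ because $Q$ is nilpotent; hence each class is $t$-power torsion. The same torsion statement also supplies the coequalization of parallel maps needed to conclude that the tower $\{t^n\}$ is cofinal in the filtered index category of co-roofs (two factorizations of $t^n$ differ by a map factoring through $Q$, which dies after multiplying by a further power of $t$). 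With these points made explicit, your computation $\Map_{\Coh^+_{\mathrm{loc}}(\fX)}(\cF, \cG) \simeq \colim_n \Map_{\Coh^+(A)}(\cF, \cG) \simeq \Map_B(\cF \otimes_A B, \cG \otimes_A B)$ is a complete and self-contained substitute for the citation, essentially an affine-case re-proof of the HPV theorem in the spirit of the paper's later Lemmas \ref{lem:hom_to_nilpotent_is_nilpotent}--\ref{lem:computation_j}.
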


\begin{proof}
	We start by proving (1).
	Using \cite[Corollary 2.9]{Hennion_Porta_Vezzosi_Formal_gluing} we have to check that the $t$-structure on $\Coh^+(\fX)$ restricts to a $t$-structure on $\Coh^+_{\mathrm{nil}}(\fX)$ and that the inclusion
	\[ i \colon \Cohh_{\mathrm{nil}}(\fX) \longhookrightarrow \Cohh(\fX) \]
	admits a right adjoint $R$ whose counit $i( R( X ) ) \to X$ is a monomorphism for every $X \in \Cohh(\fX)$.
	For the first statement, we remark that it is enough to check that the functor $\rigg \colon \Coh^+( \fX ) \to \Coh^+( \fX^\rig )$ is $t$-exact.
	As both $\Coh^+( \fX )$ and $\Coh^+( \fX^\rig )$ satisfy \'etale descent in $\fX$, we can test this locally on $\fX$.
	When $\fX$ is affine, the assertion follows directly from \cref{prop:modules_on_affines_and_affinoid}.
	As for the second statement, we first observe that
	\[ \Cohh( \fX ) \simeq \Cohh( \trunc( \fX ) ) . \]
	We can therefore assume that $\fX$ is underived.
	At this point, the functor $R$ can be explicitly described as the functor sending $\fF\in \Cohh( \fX )$ to the subsheaf of $\fF$ spanned by $\mathfrak m$-nilpotent sections.
	The proof of (1) is thus complete.
	
	We now turn to the proof of (2).
	The existence of $\Lambda$ and the factorization $\rigg \simeq \Lambda \circ \rL$ follow from the definitions.
	Moreover, $\rL \colon \Coh^+( \fX ) \to \Coh^+_{\mathrm{loc}}( \fX )$ is essentially surjective (cf.\ \cite[Lemma 2.3]{Hennion_Porta_Vezzosi_Formal_gluing}).
	It follows that the essential images of $\rigg$ and of $\Lambda$ coincide.
	
	Finally, (3) follows directly from \cref{prop:modules_on_affines_and_affinoid} and \cite[Theorem 2.12]{Hennion_Porta_Vezzosi_Formal_gluing}.
\end{proof}

The commutativity of \eqref{eq:naturality_rigg} implies that a morphism $\ff \colon \fX \to \fY$ in $\dfDM_{\kc}^{\mathrm{taft}}$ induces a well defined functor
\[ \ff^{\circ *} \colon \Coh^+_{\mathrm{loc}}( \fY ) \longrightarrow \Coh^+_{\mathrm{loc}}( \fX ) . \]
It is a simple exercise in $\infty$-categories to promote this construction to an actual functor
\[ \Coh^+_{\mathrm{loc}} \colon \big( \dfDM_{\kc}^{\mathrm{taft}} \big)\op \longrightarrow \Cat_\infty^{\mathrm{st}} . \]
Having \cref{lem:existence_formal_models_affine} and \cref{prop:local_almost_perfect_complexes_formal_properties} at our disposal, the question of the non-emptiness of $\FormalModels(\cF)$ is essentially reduced to the the following:

\begin{thm} \label{thm:descent_punctured_category}
	Let $\dfSch_{\kc}^{\mathrm{taft}, \mathrm{qcqs}}$ denote the $\infty$-category of derived $\kc$-adic schemes which are quasi-compact, quasi separated and topologically almost of finite presentation.
	Then the functor
	\[ \Coh^+_{\mathrm{loc}} \colon \big( \dfSch_{\kc}^{\mathrm{taft}, \mathrm{qcqs}} \big)\op \longrightarrow \Cat_\infty^{\mathrm{st}}  \]
	is a hypercomplete sheaf for the formal Zariski topology.
\end{thm}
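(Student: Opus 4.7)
The plan is to reduce the theorem to an essential surjectivity statement. Two auxiliary descent facts are easy: (a) Zariski hyperdescent for $\Coh^+$ on $\dfSch_{\kc}^{\taft,\mathrm{qcqs}}$, inherited from étale hyperdescent for almost perfect complexes on derived formal $\kc$-adic Deligne--Mumford stacks; and (b) Zariski hyperdescent for the presheaf $\fX \mapsto \Coh^+(\fX^\rig)$, obtained by transferring étale descent for $\Coh^+$ on $\dAn_k$ across derived rigidification, using that the rigidification of a formal Zariski open immersion is an analytic open immersion. Taking fibers in $\Catst$ then shows that $\Coh^+_{\mathrm{nil}}$ is also a Zariski hypersheaf. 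Moreover, the natural transformation $\Lambda \colon \Coh^+_{\loc} \to \Coh^+((-)^\rig)$ of \cref{prop:local_almost_perfect_complexes_formal_properties}(2) is always fully faithful --- by the universal property of the Verdier quotient, since $\Coh^+_{\mathrm{nil}}$ is by definition the fiber of the rigidification functor --- and is a pointwise equivalence on affines by \cref{prop:local_almost_perfect_complexes_formal_properties}(3). Combined with (b), the theorem therefore reduces to showing that $\Lambda_\fX$ is essentially surjective for every $\fX \in \dfSch_{\kc}^{\taft,\mathrm{qcqs}}$; equivalently, that every $\cF \in \Coh^+(\fX^\rig)$ admits a formal model on $\fX$.

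For this essential surjectivity I would proceed by induction on the minimal size of a finite affine Zariski cover of $\fX$, the base case being \cref{lem:existence_formal_models_affine}. For the inductive step, decompose $\fX = \fU \cup \fV$ with $\fU$ affine and $\fV$ covered by strictly fewer affines; quasi-separation ensures that $\fU \cap \fV$ is quasi-compact and also covered by strictly fewer affines. Given $\cF \in \Coh^+(\fX^\rig)$, the inductive hypothesis produces formal models $\fF_\fU \in \Coh^+(\fU)$ and $\fF_\fV \in \Coh^+(\fV)$ whose rigidifications are identified with $\cF|_{\fU^\rig}$ and $\cF|_{\fV^\rig}$, respectively. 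The goal is to glue them along $\fU \cap \fV$ using the descent for $\Coh^+$ from (a).

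The main obstacle I expect is precisely this gluing step: the matching datum over $\fU \cap \fV$ is naturally an equivalence only after rigidification, while descent for $\Coh^+$ on the formal side demands an actual equivalence in $\Coh^+(\fU \cap \fV)$. Lifting a post-rigidification equivalence to a genuine equivalence in $\Coh^+$ is not possible in general, but becomes possible after multiplying by a sufficiently large power of a uniformizer $t \in \mathfrak m$ and modifying the formal models along cofinal choices in their category of alternatives. Executing this clearing-of-denominators argument compatibly across the whole cover is the crux of the proof; I would expect it to rely on first establishing filteredness of the \infcat of formal models of a fixed almost perfect complex on an affine piece, which provides the requisite compatible modifications.
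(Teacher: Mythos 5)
The reduction step contains a genuine error: you assert that $\Lambda \colon \Coh^+_{\mathrm{loc}}(\fX) \to \Coh^+(\fX^\rig)$ is \emph{automatically} fully faithful ``by the universal property of the Verdier quotient, since $\Coh^+_{\mathrm{nil}}$ is by definition the fiber of the rigidification functor.'' The universal property of the cofiber in $\Cat_\infty^{\mathrm{st}}$ only provides the factorization $\rigg \simeq \Lambda \circ \rL$; it says nothing about full faithfulness of the induced functor, even when the kernel of $\rigg$ is \emph{exactly} the subcategory being killed. For a counterexample, take the diagonal $\Perf(k) \to \Perf(k) \times \Perf(k)$: its kernel is trivial, so the induced functor out of the (trivial) Verdier quotient is the diagonal itself, which is not fully faithful. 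In the situation at hand, full faithfulness of $\Lambda_\fX$ for qcqs $\fX$ is equivalent (given your point (b) and the affine case) to full faithfulness of the comparison functor $\ff_\bullet^{\circ *}$ into the limit, and this is precisely the technical heart of the paper's proof: it occupies the entire subsection culminating in \cref{cor:full_faithfulness}, and is established by a Beck--Chevalley adjointability analysis on $\Ind$-completions whose key input (\cref{lem:Beck_Chevalley_I}) is that pushforward along the cover maps carries heart objects of $\Coh^+_{\mathrm{nil}}$ into $\Ind(\Coh^+_{\mathrm{nil}})$, using quasi-compactness to produce $a \in \fm$ acting by zero. Equivalently, full faithfulness of $\Lambda$ amounts to the Hom base-change formula $\Hom_\fX(\fF,\fG) \otimes_{\kc} k \simeq \Hom_{\fX^\rig}(\fF^\rig, \fG^\rig)$ of \cref{cor:base_change_hom}; but beware that the paper proves that formula \emph{using} consequences of the very theorem under discussion (via the right adjoint $j$ of \cref{cor:fully_faithful_adjoint_to_rigg}), so it cannot be cited here without circularity. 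Your ``reduction to essential surjectivity'' therefore silently discharges the hardest half of the theorem.

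Your route to essential surjectivity also diverges from the paper's and is incomplete, though you flag the gap honestly. The paper does not glue formal models Mayer--Vietoris style: it identifies $\lim_{\mathbf \Delta} \Coh^+_{\mathrm{loc}}(\fU_\bullet) \simeq \Coh^+(\fX^\rig)$ via the affine equivalences, shows the heart lies in the essential image by the classical Raynaud theorem applied to truncations, shows the essential image is a \emph{stable} subcategory (a step that again uses full faithfulness, to lift a map $\psi$ and take its fiber), deduces the cohomologically bounded case by Postnikov induction, and concludes by left $t$-completeness. In your gluing plan, the matching datum over $\fU \cap \fV$ lifts only to an ``isogeny'' $t^{\mathbf m} \cdot (\text{equivalence})$, not an equivalence; repairing this by modifying the models is exactly what fails naively for complexes (no good images/saturations in the derived setting), and the filteredness of $\FormalModels(\cF)$ you propose to invoke is proved in the paper (\cref{thm:formal_models_filtered}) only as a \emph{consequence} of this theorem --- on affines it can be extracted independently from \cref{prop:local_almost_perfect_complexes_formal_properties}(3), but your induction needs it over the possibly non-affine intersection $\fU \cap \fV$ before it is available. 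Both issues must be resolved before the induction can run.
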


\begin{proof}
	A standard descent argument reduces us to prove the following statement: let $\ff_\bullet \colon \fU_\bullet \to \fX$ be a derived affine $\kc$-adic Zariski hypercovering.
	Then the canonical map
	\begin{equation} \label{eq:descent_functor}
		\ff_\bullet^{\circ *} \colon \Coh^+_{\mathrm{loc}}( \fX ) \longrightarrow \lim_{ [n] \in \mathbf \Delta } \Coh^+_{\mathrm{loc}}( \fU_\bullet )
	\end{equation}
	is an equivalence.
	Using \cite[Lemma 3.20]{Hennion_Porta_Vezzosi_Formal_gluing} we can endow the right hand side with a canonical $t$-structure.
	It follows from the characterization of the $t$-structure on $\Coh^+_{\mathrm{loc}}(\fX)$ given in \cref{prop:local_almost_perfect_complexes_formal_properties} that $\ff_\bullet^{\circ *}$ is $t$-exact.
	
	We will prove in \cref{cor:full_faithfulness} that $\ff_\bullet^{\circ *}$ is fully faithful.
	Assuming this fact, we can complete the proof as follows.
	We only need to check that $\ff_\bullet^{\circ *}$ is essentially surjective.
	Let $\cC$ be the essential image of $\ff_\bullet^{\circ *}$.
	We now make the following observations:
	\begin{enumerate}
		\item the heart of $\lim_{\mathbf \Delta} \Coh^+_{\mathrm{loc}}( \fU_\bullet )$ is contained in $\cC$.
		Indeed, \cref{lem:existence_formal_models_affine} implies that
		\[ \Lambda_n \colon \Coh^+_{\mathrm{loc}}( \fU_n ) \longrightarrow \Coh^+( \fU_n^\rig ) \]
		is an equivalence.
		These equivalences induce a $t$-exact equivalence
		\begin{equation} \label{eq:computing_the_limit}
			\Coh^+( \fX^\rig ) \simeq \lim_{ [n] \in \mathbf \Delta } \Coh^+_{\mathrm{loc}}( \fU_\bullet ) .
		\end{equation}
		Passing to the heart and using the canonical equivalences
		\[ \Cohh_{\mathrm{loc}}( \fX ) \simeq \Cohh_{\mathrm{loc}}( \trunc(\fX) ) \quad , \quad \Cohh( \fX^\rig ) \simeq \Cohh( \trunc( \fX^\rig ) ) , \]
		we can invoke the classical Rayanaud's theorem on formal models of coherent sheaves to deduce that the heart of the target of $\ff_\bullet^{\circ *}$ is contained in its essential image.
		
		\item The subcategory $\cC$ is stable.
		Indeed, let
		\[ \begin{tikzcd}[column sep = small]
			\cF' \arrow{r}{\varphi} & \cF \arrow{r}{\psi} & \cF''
		\end{tikzcd} \]
		be a fiber sequence in $\Coh^+( \fX^\rig ) \simeq \lim_{\mathbf \Delta} \Coh^+_{\mathrm{loc}}( \fU_\bullet )$ and suppose that two among $\cF$, $\cF'$ and $\cF''$ belong to $\cC$.
		Without loss of generality, we can assume that $\cF$ and $\cF''$ belong to $\cC$.
		Then choose elements $\fF$ and $\fF''$ in $\Coh^+_{\mathrm{loc}}( \fX )$ representing $\cF$ and $\cF''$.
		Since $\ff_\bullet^{\circ *}$ is fully faithful, we can find a morphism $\widetilde{\psi} \colon \fF \to \fF''$ lifting $\psi$.
		Set
		\[ \fF' \coloneqq \fib( \widetilde{\psi} \colon \fF \to \fF'' ) . \]
		Then $\Lambda( \fF' ) \simeq \cF'$, which means that under the equivalence \eqref{eq:computing_the_limit} the object $\cF'$ belongs to $\cC$.
	\end{enumerate}
	These two points together imply that $\ff_\bullet^{\circ *}$ is essentially surjective on cohomologically bounded elements.
	As both the $t$-structures on source and target of $\ff_\bullet^*$ are left $t$-complete, the conclusion follows.
\end{proof}

\begin{cor} \label{cor:comparing_local_and_rigid_almost_perfect_complexes}
	Let $\fX \in \dfSch_{\kc}^{\mathrm{taft}}$ and assume moreover that $\fX$ is quasi-compact and quasi-separated.
	Then the canonical map
	\[ \Lambda \colon \Coh^+_{\mathrm{loc}}( \fX ) \longrightarrow \Coh^+( \fX^\rig ) \]
	introduced in \cref{prop:local_almost_perfect_complexes_formal_properties} is an equivalence.
\end{cor}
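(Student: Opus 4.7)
The plan is to deduce this from the descent statement just proven (\cref{thm:descent_punctured_category}) combined with the affine case of the equivalence (part (3) of \cref{prop:local_almost_perfect_complexes_formal_properties}) and hyperdescent for $\Coh^+$ on derived $k$-analytic spaces.

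Concretely, I would choose a derived affine $\kc$-adic Zariski hypercovering $\ff_\bullet \colon \fU_\bullet \to \fX$, which exists by the quasi-compactness and quasi-separatedness hypotheses. Since the functor $\Coh^+_{\mathrm{loc}}$ satisfies formal Zariski hyperdescent by \cref{thm:descent_punctured_category}, we obtain a canonical equivalence
\[ \Coh^+_{\mathrm{loc}}(\fX) \;\simeq\; \lim_{[n] \in \mathbf\Delta} \Coh^+_{\mathrm{loc}}(\fU_n). \]
On the analytic side, Raynaud's generic fiber $(-)^\rig$ sends formal Zariski hypercoverings to admissible hypercoverings of $\fX^\rig$, and $\Coh^+$ on $\dAn_k$ satisfies étale (and hence admissible) hyperdescent, so
\[ \Coh^+(\fX^\rig) \;\simeq\; \lim_{[n] \in \mathbf\Delta} \Coh^+(\fU_n^\rig). \]

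The naturality square \eqref{eq:naturality_rigg} shows that the functors $\Lambda_n \colon \Coh^+_{\mathrm{loc}}(\fU_n) \to \Coh^+(\fU_n^\rig)$ assemble into a morphism of cosimplicial diagrams whose limit recovers $\Lambda \colon \Coh^+_{\mathrm{loc}}(\fX) \to \Coh^+(\fX^\rig)$. But by part (3) of \cref{prop:local_almost_perfect_complexes_formal_properties}, each $\Lambda_n$ is an equivalence because $\fU_n$ is affine. Passing to the limit yields the desired equivalence.

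The only subtle point, and what I would expect to be the main technical obstacle, is ensuring that the identification of $\Lambda$ with the limit of the $\Lambda_n$ is genuinely functorial at the $\infty$-categorical level; this requires packaging $\Lambda$ as a natural transformation of functors $\dfSch_{\kc}^{\mathrm{taft},\mathrm{qcqs}}\op \to \Cat_\infty^{\mathrm{st}}$, which in turn relies on promoting the factorization $\rigg \simeq \Lambda \circ \rL$ from \cref{prop:local_almost_perfect_complexes_formal_properties}(2) to a natural transformation — a straightforward but slightly bureaucratic exercise using the fact that both $\Coh^+_{\mathrm{nil}}$ and $\rigg$ are already functorial by the commutativity of \eqref{eq:naturality_rigg}.
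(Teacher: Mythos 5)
Your proposal is correct and matches the paper's own argument essentially verbatim: the paper also forms the commutative square comparing $\Lambda$ with $\lim_{[n]\in\mathbf\Delta}\Lambda_n$ over an affine formal Zariski hypercover, invokes \cref{thm:descent_punctured_category} for the top map, descent of $\Coh^+$ on the generic fiber for the bottom map, and \cref{prop:local_almost_perfect_complexes_formal_properties}(3) for the affine pieces. Even your closing remark about functoriality is anticipated by the paper, which dispatches the promotion of $\Coh^+_{\mathrm{loc}}$ (and hence of $\Lambda$) to a genuine functor as ``a simple exercise in $\infty$-categories.''
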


\begin{proof}
	Let $\ff_\bullet \colon \fU_\bullet \to \fX$ be a derived affine $\kc$-adic Zariski hypercover.
	Consider the induced commutative diagram
	\[ \begin{tikzcd}
	\Coh^+_{\mathrm{loc}}( \fX ) \arrow{r}{\ff_\bullet^*} \arrow{d}{\Lambda} & \lim_{[n] \in \mathbf \Delta} \Coh^+_{\mathrm{loc}}( \fU_n ) \arrow{d}{ \Lambda_\bullet} \\
	\Coh^+( \fX^\rig ) \arrow{r}{f_\bullet^*} & \lim_{[n] \in \mathbf \Delta} \Coh^+( \fU_n^\rig ) ,
	\end{tikzcd} \]
	where we set $f_\bullet \coloneqq (\ff_\bullet)^\rig$.
	The right vertical map is an equivalence thanks to \cref{prop:local_almost_perfect_complexes_formal_properties}.
	On the other hand, $\Coh^+( \fX^\rig )$ satisfies descent in $\fX$, and therefore the bottom horizontal map is also an equivalence.
	Finally, \cref{thm:descent_punctured_category} implies that the top horizontal map is an equivalence as well.
	We thus conclude that $\Lambda \colon \Coh^+_{\mathrm{loc}}( \fX ) \to \Coh^+( \fX^\rig )$ is an equivalence.
\end{proof}

\begin{cor} \label{cor:existence_formal_models}
	Let $\fX \in \dfSch_{\kc}^{\mathrm{taft}}$ and assume moreover that it is quasi-compact and quasi-separated.
	For any $\cF \in \Coh^+( \fX^\rig )$, the $\infty$-category $\FormalModels( \cF )$ is non-empty.
\end{cor}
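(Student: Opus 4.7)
The plan is to deduce the statement as a short formal consequence of \cref{cor:comparing_local_and_rigid_almost_perfect_complexes}, which has already done the real descent work via \cref{thm:descent_punctured_category}. There is essentially nothing new to prove: one only needs to chain together the two functors
\[ \Coh^+(\fX) \xrightarrow{\rL} \Coh^+_{\mathrm{loc}}(\fX) \xrightarrow{\Lambda} \Coh^+(\fX^\rig) \]
whose composition is, by construction, the generic fiber functor $\rigg$.

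First I would invoke \cref{cor:comparing_local_and_rigid_almost_perfect_complexes}: under the quasi-compactness and quasi-separation hypotheses on $\fX$, the functor $\Lambda$ is an equivalence. Therefore, for the given $\cF \in \Coh^+(\fX^\rig)$, one can pick $\cG \in \Coh^+_{\mathrm{loc}}(\fX)$ together with an equivalence $\Lambda(\cG) \simeq \cF$.

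Next, I would appeal to the essential surjectivity of the localization functor $\rL \colon \Coh^+(\fX) \to \Coh^+_{\mathrm{loc}}(\fX)$, which was noted in the proof of \cref{prop:local_almost_perfect_complexes_formal_properties} as an instance of \cite[Lemma 2.3]{Hennion_Porta_Vezzosi_Formal_gluing} (every Verdier quotient functor is essentially surjective). This produces an object $\fF \in \Coh^+(\fX)$ with $\rL(\fF) \simeq \cG$. Combining the two equivalences gives
\[ \fF^\rig \;\simeq\; \Lambda(\rL(\fF)) \;\simeq\; \Lambda(\cG) \;\simeq\; \cF , \]
so $\fF$ is a formal model for $\cF$, and we get a canonically induced object of $\FormalModels(\cF)$, which is therefore non-empty.

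There is no real obstacle here — all of the technical content lives in the descent statement \cref{thm:descent_punctured_category} and its consequence \cref{cor:comparing_local_and_rigid_almost_perfect_complexes}. The only point to be a little careful about is to not confuse essential surjectivity of $\rigg$ (which is what we want) with the factorization through $\Lambda$: since $\Lambda$ is an equivalence and $\rL$ is a Verdier localization (hence essentially surjective), the composite $\rigg = \Lambda \circ \rL$ is essentially surjective, which is exactly the non-emptiness assertion.
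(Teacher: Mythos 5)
Your proposal is correct and is essentially identical to the paper's own proof: both deduce the statement by combining the essential surjectivity of the localization functor $\rL \colon \Coh^+(\fX) \to \Coh^+_{\mathrm{loc}}(\fX)$ with the equivalence $\Lambda \colon \Coh^+_{\mathrm{loc}}(\fX) \to \Coh^+(\fX^\rig)$ provided by \cref{cor:comparing_local_and_rigid_almost_perfect_complexes}, using the factorization $\rigg \simeq \Lambda \circ \rL$ from \cref{prop:local_almost_perfect_complexes_formal_properties}. No gaps; your spelled-out chain of equivalences just makes explicit what the paper compresses into ``the conclusion follows.''
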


\begin{proof}
	The localization functor $\rL \colon \Coh^+( \fX ) \to \Coh^+_{\mathrm{loc}}( \fX )$ is essentially surjective by construction.
	Since $\fX$ is a quasi-compact and quasi-separted derived $\kc$-adic scheme topologically of finite presentation, \cref{cor:comparing_local_and_rigid_almost_perfect_complexes} implies that $\Lambda \colon \Coh^+_{\mathrm{loc}}( \fX ) \to \Coh^+( \fX^\rig )$ is an equivalence.
	The conclusion follows.
\end{proof}

\subsection{Proof of \cref{thm:descent_punctured_category}: fully faithfulness}

The only missing step in the proof of \cref{thm:descent_punctured_category} is the full faithfulness of the functor \eqref{eq:descent_functor}.
We will address this question by passing to the $\infty$-categories of ind-objects.
Let $\fX$ be a quasi-compact and quasi-separated derived $\kc$-adic scheme locally topologically almost of finite presentation.
\[ \ff \colon \fU \longrightarrow \fX \]
be a formally \'etale morphism.
Then $\ff$ induces a commutative diagram
\[ \begin{tikzcd}
	 \Ind( \Coh^+( \fX ) ) \arrow{d}{\ff^*} \arrow{r}{\rL_{\fX}} & \Ind( \Coh^+_{\mathrm{loc}}( \fX ) ) \arrow{d}{\ff^{\circ *}} \\
	 \Ind( \Coh^+( \fU ) ) \arrow{r}{\rL_{\fU}} & \Ind( \Coh^+_{\mathrm{loc}}( \fU ) ) .
\end{tikzcd} \]
The functors $\ff^*$ and $\ff^{\circ *}$ commute with colimits, and therefore they admit right adjoints $\ff_*$ and $\ff^\circ_*$.
In particular, we obtain a Beck-Chevalley transformation
\begin{equation} \label{eq:Beck_Chevalley_I}
	\theta \colon \rL_\fX \circ \ff_* \longrightarrow \ff^\circ_* \circ \rL_\fU .
\end{equation}
A key step in the proof of the full faithfulness of the functor \eqref{eq:descent_functor} is to verify that $\theta$ is an equivalence when evaluated on objects in $\Cohh( \fU )$.
Let us start with the following variation on \cite[Lemma 7.14]{Hennion_Porta_Vezzosi_Formal_gluing}:

\begin{lem} \label{lem:Beck_Chevalley_Verdier_quotient}
	Let
	\begin{equation} \label{eq:Beck_Chevalley_quotient}
		\begin{tikzcd}
			\cK_\cC \arrow[hook]{r}{i_\cC} \arrow{d}{F_\cK} & \cC \arrow{r}{L_\cC} \arrow{d}{F} & \cQ_\cC \arrow{d}{F_\cQ} \\
			\cK_\cD \arrow[hook]{r}{i_\cD} & \cD \arrow{r}{L_\cD} & \cQ_\cD 
		\end{tikzcd}
	\end{equation}
	be a diagram of stable $\infty$-categories and exact functors between them.
	Assume that:
	\begin{enumerate}
		\item the functors $i_\cC$ and $i_\cD$ are fully faithful and admit right adjoints $R_\cC$ and $R_\cD$, respectively;
		\item the functors $L_\cC$ and $L_\cD$ admit fully faithful right adjoints $j_\cC$ and $j_\cD$, respectively;
		\item the rows are fiber and cofiber sequences in $\Cat_\infty^{\mathrm{st}}$;
		\item the functors $F$, $F_\cK$ and $F_\cQ$ admit right adjoints $G$, $G_\cK$ and $G_\cQ$, respectively.
	\end{enumerate}
	Let $X \in \cD$ be an object.
	Then the following statements are equivalent:
	\begin{enumerate}
		\item the Beck-Chevalley transformation
		\[ q_X \colon L_\cC( G(X) ) \longrightarrow G_\cQ( L_\cD( X ) ) \]
		is an equivalence;
		\item the Beck-Chevalley transformation
		\[ \kappa_{R_\cD(X)} \colon i_\cC( G_\cK( R_\cD(X) ) ) \longrightarrow G( i_\cD( R_\cD( X ) ) ) \]
		is an equivalence.
	\end{enumerate}
\end{lem}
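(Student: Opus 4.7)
The plan is to exploit the recollement-like structure given by the hypotheses on both rows of \eqref{eq:Beck_Chevalley_quotient}, and to build two fiber sequences in $\cC$ with a common middle term $G(X)$, whose boundary maps realize the two Beck-Chevalley transformations in question. Two-out-of-three for fiber sequences in a stable $\infty$-category will then force the equivalence of statements (1) and (2).

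First, I would record the formal consequences of the hypotheses. Since each row of \eqref{eq:Beck_Chevalley_quotient} is a fiber and cofiber sequence in $\Cat_\infty^\mathrm{st}$ with $i_\cC, i_\cD, j_\cC, j_\cD$ fully faithful, one obtains on $\cC$ (resp.\ $\cD$) the standard recollement fiber sequence of endofunctors
\[ i_\cC R_\cC \longrightarrow \id_\cC \longrightarrow j_\cC L_\cC \quad \text{(resp.\ } i_\cD R_\cD \to \id_\cD \to j_\cD L_\cD \text{).} \]
On the other hand, from the commutativity $F \circ i_\cC \simeq i_\cD \circ F_\cK$ and $L_\cD \circ F \simeq F_\cQ \circ L_\cC$, passing to right adjoints yields canonical equivalences
\[ R_\cC \circ G \;\simeq\; G_\cK \circ R_\cD \qquad \text{and} \qquad G \circ j_\cD \;\simeq\; j_\cC \circ G_\cQ . \]
In the opposite direction one obtains Beck-Chevalley transformations $\kappa \colon i_\cC G_\cK \to G i_\cD$ and $q \colon L_\cC G \to G_\cQ L_\cD$, defined in the standard way via the units and counits of the ambient adjunctions.

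Next, I would apply the two recollement sequences. Evaluating the recollement on $\cC$ at $Y = G(X)$ and using $R_\cC G \simeq G_\cK R_\cD$ produces a fiber sequence
\[ i_\cC G_\cK R_\cD(X) \longrightarrow G(X) \longrightarrow j_\cC L_\cC G(X) . \]
Applying $G$ to the recollement sequence for $X \in \cD$ (and using that $G$, as a right adjoint between stable $\infty$-categories, preserves fiber sequences) and then using $G j_\cD \simeq j_\cC G_\cQ$ yields a second fiber sequence
\[ G(i_\cD R_\cD(X)) \longrightarrow G(X) \longrightarrow j_\cC G_\cQ L_\cD(X) . \]
A routine unwinding of the defining zig-zags shows that these two fiber sequences fit into a commutative diagram with middle vertical arrow $\id_{G(X)}$, left vertical arrow $\kappa_{R_\cD(X)}$, and right vertical arrow equal to $j_\cC(q_X)$; this identification is the content one has to verify, and it is straightforward by tracing through the adjunctions $F\dashv G$ and $L_\cC\dashv j_\cC$.

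Finally, by two-out-of-three in the stable $\infty$-category $\cC$, the map $\kappa_{R_\cD(X)}$ is an equivalence if and only if $j_\cC(q_X)$ is an equivalence. Since $j_\cC$ is fully faithful, the latter happens if and only if $q_X$ itself is an equivalence, yielding the desired biconditional. The only non-mechanical step is the identification of the right vertical arrow with $j_\cC(q_X)$; this is the place where one must be slightly careful, but it reduces to a standard adjunction chase rather than any genuinely subtle argument.
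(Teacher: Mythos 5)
Your proposal is correct and follows essentially the same route as the paper's proof: both construct the two recollement fiber sequences $i_\cC R_\cC G(X) \to G(X) \to j_\cC L_\cC G(X)$ and $G(i_\cD R_\cD(X)) \to G(X) \to G(j_\cD L_\cD(X))$, identify the outer vertical maps with $\kappa_{R_\cD(X)}$ and $j_\cC(q_X)$ via the right-adjoint equivalences $G_\cK R_\cD \simeq R_\cC G$ and $G j_\cD \simeq j_\cC G_\cQ$, and conclude by two-out-of-three together with full faithfulness of $j_\cC$. Your explicit flagging of the adjunction chase identifying the right vertical arrow with $j_\cC(q_X)$ is a point the paper leaves implicit, but the argument is the same.
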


\begin{proof}
	Since $j_\cC$ and $i_\cC$ are fully faithful, it is equivalent to check that
	\[ j_\cC ( L_\cC( G(X) ) ) \longrightarrow j_\cC ( G_\cQ( L_\cD( X ) ) ) \]
	is an equivalence if and only if $\kappa_{R_\cD(X)}$ is an equivalence.
	Using the natural equivalences
	\[ j_\cC \circ G \simeq G j_\cD \quad , \quad G_\cK \circ R_\cD \simeq R_\cC \circ G  \]
	we obtain the following commutative diagram
	\[ \begin{tikzcd}
		i_\cC ( R_\cC ( G( X ) ) ) \arrow{r} \arrow{d} & G( X ) \arrow{r} \arrow[equal]{d} & j_\cC( L_\cC( G( X ) ) ) \arrow{d} \\
		G( i_\cD( R_\cD( X ) ) ) \arrow{r} & G( X ) \arrow{r} & G( j_\cD( L_\cD( X ) ) ) .
	\end{tikzcd} \]
	Moreover, since the rows of the diagram \eqref{eq:Beck_Chevalley_quotient} are Verdier quotients, we conclude that the rows in the above diagram are fiber sequences.
	Therefore, the leftmost vertical arrow is an equivalence if and only if the rightmost one is.
\end{proof}

\begin{lem} \label{lem:Beck_Chevalley_I}
	The Beck-Chevalley transformation \eqref{eq:Beck_Chevalley_I} is an equivalence whenever evaluated on objects in $\Cohh( \fU )$.
\end{lem}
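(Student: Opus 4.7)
The plan is to invoke \cref{lem:Beck_Chevalley_Verdier_quotient} to translate the question from the Verdier quotient side to the kernel side. I identify $\cC = \Ind(\Coh^+(\fX))$, $\cD = \Ind(\Coh^+(\fU))$ and $F = \ff^*$, with the localization sequences appearing in the defining diagram of \eqref{eq:Beck_Chevalley_I} supplying the two rows of the abstract square. The lemma then asserts that, for $\cF \in \Cohh(\fU) \subset \cD$, the Beck-Chevalley map $\theta_\cF \colon \rL_\fX(\ff_* \cF) \to \ff^\circ_*(\rL_\fU \cF)$ is an equivalence if and only if the dual Beck-Chevalley map
\[ \kappa_{R_\cD(\cF)} \colon i_\cC G_\cK R_\cD(\cF) \longrightarrow \ff_*(i_\cD R_\cD(\cF)) \]
is an equivalence. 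Since $i_\cC$ is fully faithful, the latter is equivalent to requiring that $\ff_*(i_\cD R_\cD(\cF))$ already lie in $\Ind(\Coh^+_\mathrm{nil}(\fX))$, i.e.\ be $\mathfrak m$-nilpotent. By \cref{lem:characterization_nilpotent_almost_perfect_modules}, this amounts to showing that every cohomology sheaf of $\ff_*(i_\cD R_\cD(\cF))$ is annihilated by some power of $\mathfrak m$.

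To verify this, I would use the explicit description of $R_\cD$ recalled in the proof of \cref{prop:local_almost_perfect_complexes_formal_properties}, which identifies $R_\cD(\cF)$ with the ind-object $\varinjlim_n \cF[\mathfrak m^n]$, where $\cF[\mathfrak m^n] \subset \cF$ denotes the coherent subsheaf of sections annihilated by $\mathfrak m^n$. Since $\ff$ is formally \'etale between quasi-compact and quasi-separated objects (so in particular $\ff^*$ preserves compact objects in the Ind-completions, being $t$-exact and coherence-preserving on the heart), the right adjoint $\ff_*$ commutes with filtered colimits, giving
\[ \ff_*(i_\cD R_\cD(\cF)) \simeq \varinjlim_n \ff_*(\cF[\mathfrak m^n]). \]
By the $\cO_\fX$-linearity of $\ff_*$, every cohomology sheaf of $\ff_*(\cF[\mathfrak m^n])$ is still annihilated by $\mathfrak m^n$, so each $\ff_*(\cF[\mathfrak m^n])$ is $\mathfrak m$-nilpotent. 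Closure of $\Ind(\Coh^+_\mathrm{nil}(\fX))$ under filtered colimits finishes the argument.

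The main obstacle I anticipate is the justification that $\ff_*$ commutes with the filtered colimit $\varinjlim_n \cF[\mathfrak m^n]$ in the Ind-category. This reduces to checking that $\ff^*$ preserves compactness, i.e.\ restricts to a functor $\Coh^+(\fX) \to \Coh^+(\fU)$, which holds because $\ff$ is flat; the right adjoint in the Ind-completions then preserves filtered colimits by the usual adjunction argument. A secondary point is the identification $R_\cD(\cF) \simeq \varinjlim_n \cF[\mathfrak m^n]$ in the derived/Ind setting, which should follow from the corresponding statement at the level of hearts (established in the proof of \cref{prop:local_almost_perfect_complexes_formal_properties}) together with the $t$-exactness of the localization functor built there. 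Once these two technical points are in hand, the rest of the argument is formal.
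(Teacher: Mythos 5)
Your proposal follows essentially the same route as the paper's proof: both apply \cref{lem:Beck_Chevalley_Verdier_quotient} with the identifications $\cC = \Ind(\Coh^+(\fX))$, $\cD = \Ind(\Coh^+(\fU))$, reduce the lemma to showing that $\ff_*$ carries the $\mathfrak m$-torsion part of a heart object into $\Ind(\Coh^+_{\mathrm{nil}}(\fX))$, and both close the argument using $\cO$-linearity of $\ff_*$ together with the fact that all functors in play commute with filtered colimits (your explicit treatment of $R_\cD(\cF) \simeq \varinjlim_n \cF[\mathfrak m^n]$ and of pushing $\ff_*$ past this colimit is implicit in the paper, which instead simply reduces to objects of $\Cohh_{\mathrm{nil}}(\fU)$).

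One step, as written, does not quite work: \cref{lem:characterization_nilpotent_almost_perfect_modules} is a statement about objects of $\Coh^+(\fX)$, whereas $\ff_*(\cF[\mathfrak m^n])$ is in general only an ind-object --- $\ff$ is formally \'etale, not proper, so $\ff_*$ does not preserve coherence --- and for an unbounded coconnective ind-object, having all homotopy sheaves annihilated by $\mathfrak m^n$ does not formally place it in $\Ind(\Coh^+_{\mathrm{nil}}(\fX))$ (that would require a Postnikov-convergence argument). The repair is your own linearity observation, run as in the paper: every $a \in \mathfrak m^n$ acts by zero on $\cF[\mathfrak m^n]$, hence multiplication by $a$ on $\ff_*(\cF[\mathfrak m^n])$ is null-homotopic; since $a$ becomes invertible after applying $\rigg$, this forces $(\ff_*(\cF[\mathfrak m^n]))^\rig \simeq 0$, and the fiber sequence $i R(\cG) \to \cG \to j(\cG^\rig)$ appearing in the proof of \cref{cor:base_change_hom} identifies the kernel of the ind-extended $\rigg$ with $\Ind(\Coh^+_{\mathrm{nil}}(\fX))$, yielding the desired membership. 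The paper streamlines further by using quasi-compactness of $\fU$ to find a single $a \in \mathfrak m$ killing a given object of $\Cohh_{\mathrm{nil}}(\fU)$, but relative to your decomposition $\varinjlim_n \cF[\mathfrak m^n]$ this difference is cosmetic; with the indicated substitution your argument is correct and matches the paper's.
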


\begin{proof}
	Using \cref{lem:Beck_Chevalley_Verdier_quotient}, we see that it is enough to prove that the Beck-Chevalley transformation associated to the square
	\[ \begin{tikzcd}
		\Ind( \Coh^+_{\mathrm{nil}}( \fX ) ) \arrow{d}{\ff^*} \arrow{r} & \Ind( \Coh^+( \fX ) ) \arrow{d}{\ff^*} \\
		\Ind( \Coh^+_{\mathrm{nil}}( \fU ) ) \arrow{r} & \Ind( \Coh^+( \fU ) )
	\end{tikzcd} \]
	is an equivalence when evaluated on objects of $\Cohh_{\mathrm{nil}}( \fU )$.
	As the horizontal functors are fully faithful, it is enough to check that the functor
	\[ \ff_* \colon \Ind( \Coh^+( U ) ) \longrightarrow \Ind( \Coh^+( \fX ) ) \]
	takes $\Cohh_{\mathrm{nil}}( \fU )$ to $\Ind( \Coh^+_{\mathrm{nil}}( \fX ) )$.
	Let $\fF \in \Cohh_{\mathrm{nil}}( \fU )$.
	We have to verify that $( \ff_*( \fF ) )^\rig \simeq 0$.
	Since $\fF$ is coherent and in the heart and since $\fU$ is quasi-compact we see that there exists an element $a \in \mathfrak m$ such that the map $\mu_a \colon \fF \to \fF$ given by multiplication by $a$ is zero.
	Therefore $\ff_*( \mu_a ) \colon \ff_*( \fF ) \to \ff_*( \fF )$ is homotopic to zero.
	Since $\ff_*( \mu_a )$ is equivalent to the endomorphism $\ff_*( \fF )$ given by multiplication by $a$, we conclude that $( \ff_*( \fF ) )^\rig \simeq 0$.
	The conclusion follows.
\end{proof}

Having these adjointability statements at our disposal, we turn to the actual study of the full faithfulness of the functor \eqref{eq:descent_functor}.
Let
\[ \fU_\bullet \colon \mathbf \Delta\op \longrightarrow \dfSch_{\kc}^{\mathrm{taft}} \]
be an affine $\kc$-adic Zariski hypercovering of $\fX$ and let $\ff_\bullet \colon \fU_\bullet \to \fX$ be the augmentation morphism.
The morphism $\ff_\bullet$ induces functors
\[ \ff_\bullet^* \colon \Ind( \Coh^+( \fX ) ) \longrightarrow \lim_{ [n] \in \mathbf \Delta } \Ind( \Coh^+( \fU_n ) ) \]
and
\[ \ff_{\bullet}^{\circ *} \colon \Ind( \Coh^+_{\mathrm{loc}}( \fX ) ) \longrightarrow \lim_{[n] \in \mathbf \Delta} \Ind( \Coh^+_{\mathrm{loc}}( \fU_n ) ) . \]
These functors commute by construction with filtered colimits, and therefore they admit right adjoints, that we denote respectively as
\[ \ff_{\bullet*} \colon \lim_{ [n] \in \mathbf \Delta } \Ind( \Coh^+( \fU_n ) ) \longrightarrow \Ind( \Coh^+( \fX ) )  \]
and
\[ \ff^{\circ}_{\bullet *} \colon \lim_{ [n] \in \mathbf \Delta } \Ind( \Coh^+_{\mathrm{loc}}( \fU_n ) ) \longrightarrow \Ind( \Coh^+_{\mathrm{loc}}( \fX ) ) . \]
Moreover, the functors $\ff_\bullet^*$ and $\ff_\bullet^{\circ *}$ fit in the following commutative diagram:
\[ \begin{tikzcd}
	\Ind( \Coh^+( \fX ) ) \arrow{d}{\rL} \arrow{r}{\ff_\bullet^*} & \lim_{ [n] \in \mathbf \Delta } \Ind( \Coh^+( \fU_\bullet ) ) \arrow{d}{\rL_\bullet} \\
	\Ind( \Coh^+_{\mathrm{loc}}( \fX ) ) \arrow{r}{\ff_\bullet^{\circ *}} & \lim_{ [n] \in \mathbf \Delta } \Ind( \Coh^+_{\mathrm{loc}}( \fU_\bullet ) ) .
\end{tikzcd} \]
In particular, we have an associated Beck-Chevalley transformation
\begin{equation} \label{eq:Beck_Chevalley}
	\theta \colon \rL \circ \ff_{\bullet *} \longrightarrow \ff_{\bullet *}^\circ \circ \rL_\bullet .
\end{equation}

\begin{prop} \label{prop:Beck_Chevalley}
	The Beck-Chevalley transformation \eqref{eq:Beck_Chevalley} is an equivalence when restricted to the full subcategory $\lim_{\mathbf \Delta} \Cohh( \fU_\bullet )$ of $\lim_{\mathbf \Delta} \Ind( \Coh^+( \fU_\bullet ) )$.
\end{prop}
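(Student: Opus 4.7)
The plan is to reduce the statement to the single-morphism case already handled by Lemma \ref{lem:Beck_Chevalley_I}, by identifying the right adjoints $\ff_{\bullet *}$ and $\ff^\circ_{\bullet *}$ with cosimplicial totalizations and then showing that $\rL$ commutes past these totalizations on the relevant objects.

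First, I would observe that because $\ff_\bullet^*$ and $\ff_\bullet^{\circ *}$ are built componentwise from the individual pullbacks $\ff_n^*$ and $\ff_n^{\circ *}$, a routine adjunction computation identifies the right adjoints with
\[ \ff_{\bullet *}(\cG_\bullet) \simeq \lim_{[n] \in \mathbf\Delta} \ff_{n *}(\cG_n), \qquad \ff^\circ_{\bullet *}(\cG_\bullet) \simeq \lim_{[n] \in \mathbf\Delta} \ff^\circ_{n *}(\cG_n), \]
and that under these identifications the Beck-Chevalley transformation $\theta$ is precisely the totalization of the levelwise Beck-Chevalley transformations $\theta_n \colon \rL \circ \ff_{n *} \to \ff^\circ_{n *} \circ \rL_n$. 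Lemma \ref{lem:Beck_Chevalley_I} then supplies that each $\theta_n$ is already an equivalence on $\Cohh(\fU_n)$, so the whole question reduces to showing that $\rL$ commutes with the cosimplicial totalization $\lim_{[n] \in \mathbf\Delta} \ff_{n *}(\cG_n)$ whenever $\cG_\bullet$ lies in $\lim_\mathbf\Delta \Cohh(\fU_\bullet)$.

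To establish this commutation I would use a $t$-structure argument. Each $\ff_n$ being formally \'etale, the pullback $\ff_n^*$ is $t$-exact and $\ff_{n *}$ is consequently left $t$-exact, so each $\ff_{n *}(\cG_n)$ lies in $\Ind(\Coh^+(\fX))_{\geq 0}$. The totalization of a levelwise connective cosimplicial diagram can be reconstructed from its Postnikov tower, and for fixed $k$ the truncation $\tau_{\leq k}(\lim_\mathbf\Delta X_\bullet)$ depends only on a finite semi-cosimplicial piece $\lim_{\mathbf\Delta_{\leq k+1}} \tau_{\leq k} X_\bullet$. Since the $t$-structures on both $\Ind(\Coh^+(\fX))$ and $\Ind(\Coh^+_{\mathrm{loc}}(\fX))$ are left $t$-complete and $\rL$ is $t$-exact, $\rL$ commutes with Postnikov limits; being exact it also commutes with each of these finite totalizations. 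Stringing these two observations together yields the desired commutation $\rL(\lim_n \ff_{n *}(\cG_n)) \simeq \lim_n \rL(\ff_{n *}(\cG_n))$ and finishes the proof.

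The main obstacle I expect is precisely this last $t$-structure step: verifying left $t$-completeness of the Ind-completions in play and bookkeeping carefully how the Postnikov truncations interact with the cosimplicial limit along $\mathbf\Delta$. The identification of the right adjoints as totalizations is purely formal, and the levelwise input from Lemma \ref{lem:Beck_Chevalley_I} is already in hand, so all the real content of the argument sits in controlling the totalization via truncations.
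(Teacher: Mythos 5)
Your architecture is exactly that of the paper's proof: identify the right adjoints $\ff_{\bullet *}$ and $\ff^\circ_{\bullet *}$ with the levelwise totalizations $\lim_{[n] \in \mathbf\Delta} \ff_{n*}(-)$ and $\lim_{[n] \in \mathbf\Delta} \ff^\circ_{n*}(-)$, feed in \cref{lem:Beck_Chevalley_I} levelwise, and commute $\rL$ past the totalization by a truncation argument. But the $t$-structure step --- which you correctly flag as carrying all the content --- has its duality running the wrong way, and as written it fails. Left $t$-exactness of $\ff_{n*}$ places $\ff_{n*}(\cG_n)$ in the \emph{coconnective} part of $\Ind(\Coh^+(\fX))$, not in $\Ind(\Coh^+(\fX))_{\geq 0}$; and for \emph{cosimplicial} limits it is coconnectivity, not connectivity, that yields finite-stage convergence. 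Concretely, the fiber of $\lim_{\mathbf\Delta} X^\bullet \to \lim_{\mathbf\Delta_{\le m}} X^\bullet$ is built from the layers $N^j(X)[-j]$ with $j > m$; if the $X^n$ are merely connective these layers spread into arbitrarily negative degrees, and no truncation $\tau_{\le k}$ of the totalization is determined by a finite semi-cosimplicial piece. The statement you invoke is the dual of the true one, which holds for geometric realizations of connective \emph{simplicial} objects. Likewise, left $t$-completeness is of no use here: on coconnective objects the Postnikov tower is eventually constant, so ``commuting $\rL$ with Postnikov limits'' buys nothing.

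The repair is the mechanical dualization of your plan, and it is precisely what the paper does. Since each $\fF_n$ lies in the heart and the functors $\ff_{n*}$, $\ff^\circ_{n*}$ are left $t$-exact, both sides of $\theta$ are coconnective; \emph{right} $t$-completeness of the $t$-structures on the two limit categories then reduces the claim to $\pi_i(\theta)$ being an isomorphism for every $i$ (the cofiber of $\theta$ has vanishing homotopy groups, its truncations $\tau_{\ge -n}$ are bounded and hence zero, and right completeness recovers the cofiber as their colimit). Coconnectivity of the $\ff_{n*} \fF_n$ ensures that each $\pi_i$ of the totalization agrees with $\pi_i$ of the finite partial totalization over $\mathbf\Delta_{\le m}$ for $m \gg 0$, and since $\rL$ is $t$-exact and exact, it commutes with $\pi_i$ and with these finite limits, landing you on \cref{lem:Beck_Chevalley_I} exactly as intended. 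So keep your outline, but replace connective/left-complete/Postnikov throughout by coconnective/right-complete/finite partial totalizations --- equivalently, run the argument with $\tau_{\ge -k}$ in place of $\tau_{\le k}$.
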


\begin{proof}
	The discussion right after \cite[Corollary 8.6]{Porta_Yu_Higher_analytic_stacks_2014} allows us to identify the functor
	\[ \ff_{\bullet *} \colon \lim_{ [n] \in \mathbf \Delta} \Ind( \Coh^+( \fU_n ) ) \longrightarrow \Ind( \Coh^+( \fX ) ) \]
	with the functor informally described by sending a descent datum $\fF_\bullet \in \lim_{ \mathbf \Delta } \Ind( \Coh^+( \fU_\bullet ) )$ to
	\[ \lim_{ [n] \in \mathbf \Delta } \ff_{n*} \fF_n \in \Ind( \Coh^+( \fX ) ) . \]
	Similarly, the functor $\ff^\circ_{\bullet *}$ sends a descent datum $\cF_\bullet \in \lim_{\mathbf \Delta} \Ind( \Coh^+_{\mathrm{loc}}( \fU_\bullet )$ to
	\[ \lim_{ [n] \in \mathbf \Delta } \ff_{n*}^\circ \cF_n \in \Ind( \Coh^+_{\mathrm{loc}}( \fX ) ) . \]
	We therefore have to show that the Beck-Chevalley transformation
	\[ \theta \colon \rL \bigg( \lim_{ [n] \in \mathbf \Delta } \ff_{n*} \fF_n \bigg) \longrightarrow \lim_{ [n] \in \mathbf \Delta }  \ff^\circ_{n*}  (\rL_n \fF_n) \]
	is an equivalence whenever each $\fF_n$ belongs to $\Cohh( \fU_n )$.
	First notice that the functors $\ff_{\bullet*}$ and $\ff^\circ_{\bullet *}$ are left $t$-exact.
	In particular, if $\fF_\bullet \in \lim_{\mathbf \Delta} \Ind( \Cohh( \fU_\bullet ) )$ then both $\rL \ff_{\bullet *}( \fF_\bullet )$ and $\ff^\circ_{\bullet *}( \fF_\bullet )$ are coconnective.
	As the $t$-structures on $\lim_{ \mathbf \Delta } \Ind( \Coh^+( \fU_\bullet ) )$ and on $\lim_{\mathbf \Delta} \Ind( \Coh^+_{\mathrm{loc}}( \fU_\bullet ) )$ are right $t$-complete, we conclude that it is enough to prove that $\pi_i( \theta )$ is an isomorphism for every $i \in \mathbb Z$.
	We now observe that for $m \ge i + 2$ we have
	\[ \pi_i \bigg( \lim_{ [n] \in \mathbf \Delta } \ff^\circ_{n*}( \rL_n \fF_n ) \bigg) \simeq \pi_i \bigg( \lim_{ [n] \in \mathbf \Delta_{\le m} } \ff^\circ_{n*}( \rL_n \fF_n ) \bigg) , \]
	and similarly
	\[ \pi_i \bigg( \rL \bigg( \lim_{ [n] \in \mathbf \Delta } \ff_{n*} \fF_n \bigg) \bigg) \simeq \rL \bigg( \pi_i \bigg( \lim_{ [n] \in \mathbf \Delta } \ff_{n*} \fF_n \bigg) \bigg) \simeq \rL \bigg( \pi_i \bigg( \lim_{ [n] \in \mathbf \Delta_{\le m} } \ff_{n*} \fF_{n*} \bigg) \bigg) . \]
	It is therefore enough to prove that for every $m \ge 0$ the canonical map
	\[ \rL\bigg( \lim_{ [n] \in \mathbf \Delta_{\le m} } \ff_{n *} \fF_n \bigg) \longrightarrow \lim_{ [n] \in \mathbf \Delta_{\le m} } \ff^\circ_{n*}( \rL_n  \fF_n ) \]
	is an equivalence.
	As $\rL$ commutes with finite limits, we are reduced to show that the canonical map
	\[ \rL ( \ff_{n*} \fF_n ) \longrightarrow \ff^\circ_{n*} ( \rL_n \fF_n ) \]
	is an equivalence whenever $\fF_n \in \Cohh( \fU_n )$, which follows from \cref{lem:Beck_Chevalley_I}.
\end{proof}

\begin{cor} \label{cor:full_faithfulness}
	Let $\fX$ and $\ff_\bullet \colon \fU_\bullet \to \fX$ be as in the above discussion.
	Then the functor
	\[ \ff_\bullet^{\circ *} \colon \Coh^+_{\mathrm{loc}}( \fX ) \longrightarrow \lim_{ [n] \in \mathbf \Delta } \Coh^+_{\mathrm{loc}}( \fU_n ) \]
	is fully faithful.
\end{cor}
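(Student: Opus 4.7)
My plan is to deduce full faithfulness from the Beck--Chevalley equivalence of Proposition \ref{prop:Beck_Chevalley} via an adjunction argument, coupled with \'etale hyperdescent for $\Coh^+$ on derived formal \DM stacks.

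First I would pass to $\Ind$-completions. Since both $\Coh^+_{\mathrm{loc}}(\fX) \hookrightarrow \Ind(\Coh^+_{\mathrm{loc}}(\fX))$ and the analogous inclusion for the cosimplicial limit on the right are fully faithful, proving full faithfulness of $\ff_\bullet^{\circ *}$ reduces to showing that, for every $\fG \in \Coh^+_{\mathrm{loc}}(\fX)$, the unit
\[ \eta_\fG \colon \fG \longrightarrow \ff^\circ_{\bullet *}\,\ff_\bullet^{\circ *}\fG \]
of the adjunction $\ff_\bullet^{\circ *} \dashv \ff^\circ_{\bullet *}$ is an equivalence in $\Ind(\Coh^+_{\mathrm{loc}}(\fX))$. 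Using that $\rL \colon \Coh^+(\fX) \to \Coh^+_{\mathrm{loc}}(\fX)$ is essentially surjective and $t$-exact (Proposition \ref{prop:local_almost_perfect_complexes_formal_properties}), I choose $\fF \in \Coh^+(\fX)$ with $\rL\fF \simeq \fG$; commutativity of the square relating $\rL$, $\rL_\bullet$, $\ff_\bullet^*$ and $\ff_\bullet^{\circ *}$ then identifies $\ff_\bullet^{\circ *}\fG$ with $\rL_\bullet\,\ff_\bullet^*\fF$.

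The argument then splits into two parallel steps. On the one hand, \'etale hyperdescent for $\Coh^+$ applied to the affine $\kc$-adic Zariski hypercover $\fU_\bullet \to \fX$ yields $\fF \simeq \ff_{\bullet *}\,\ff_\bullet^*\fF$ in $\Ind(\Coh^+(\fX))$; applying $\rL$ gives $\fG \simeq \rL\,\ff_{\bullet *}\,\ff_\bullet^*\fF$. On the other hand, I want to identify the latter with $\ff^\circ_{\bullet *}\,\rL_\bullet\,\ff_\bullet^*\fF$, which is precisely the statement of Proposition \ref{prop:Beck_Chevalley} provided the input $\ff_\bullet^*\fF$ lies in $\lim_{\mathbf \Delta}\Cohh(\fU_\bullet)$. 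To arrange this, I perform a $t$-structural reduction analogous to the one used in the proof of Proposition \ref{prop:Beck_Chevalley}: the functor $\ff_\bullet^{\circ *}$ is $t$-exact, its right adjoint $\ff^\circ_{\bullet *}$ is consequently left $t$-exact, and an argument based on Postnikov truncations and $t$-completeness of the relevant $t$-structures lets me test $\eta_\fG$ after applying each $\pi_i$, reducing to the case $\fF \in \Cohh(\fX)$. Formal \'etaleness of each $\ff_n$ then forces $\ff_\bullet^*\fF \in \lim_{\mathbf \Delta}\Cohh(\fU_\bullet)$, and the two steps combine to show that $\eta_\fG$ is an equivalence.

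The main obstacle, as elsewhere in this section, will be the $t$-structural bookkeeping at the level of $\Ind$-completions and of the cosimplicial limit $\lim_{\mathbf \Delta}\Ind(\Coh^+_{\mathrm{loc}}(\fU_\bullet))$: one has to verify that these $t$-structures are sufficiently complete, and that both $\ff^\circ_{\bullet *}$ and the limit interact correctly with truncations, so that the Postnikov-style reduction to heart objects is legitimate. Once that is secure, the remaining content is formal manipulation of the commuting square of descent and Beck--Chevalley maps.
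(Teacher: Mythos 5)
Your proposal is correct and follows essentially the same route as the paper's proof: pass to $\Ind$-completions, reduce via the adjunction $\ff_\bullet^{\circ *} \dashv \ff^\circ_{\bullet *}$ to showing the unit is an equivalence, use left completeness of the $t$-structures together with a Postnikov-style induction to reduce to heart objects, lift to $\fF \in \Cohh(\fX)$ via essential surjectivity and $t$-exactness of $\rL$, invoke the descent unit equivalence $\fF \simeq \ff_{\bullet *}\ff_\bullet^*\fF$ for $\Coh^+$, and conclude by Proposition \ref{prop:Beck_Chevalley}. The only cosmetic difference is that the paper organizes the heart reduction as an induction on the number of nonvanishing homotopy groups after first restricting to $\Cohb_{\mathrm{loc}}(\fX)$, rather than testing the unit on each $\pi_i$, but this is the same $t$-structural bookkeeping you correctly flag as the remaining work.
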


\begin{proof}
	As the functor $\ff_\bullet^{\circ *}$ is $t$-exact and the $t$-structure on both categories is left complete, we see that it is enough to reduce ourselves to prove that $\ff_\bullet^*$ is fully faithful when restricted to $\Cohb_{\mathrm{loc}}(\fX)$.
	Consider the following commutative cube:
	\begin{equation}
		\begin{tikzcd}
			\Coh^+( \fX ) \arrow{rr}{\ff_{\bullet}^*} \arrow[hook]{dr} \arrow{dd}& & \lim_{[n] \in \mathbf \Delta} \Coh^+( \fU_n ) \arrow{dd} \arrow[hook]{dr} \\
			{} & \Ind \left( \Coh^+( \fX ) \right) \arrow[crossing over]{rr}[near start]{\ff_{\bullet}^{*}}  \arrow{dd}[near end]{\rL_{\fX}}& & \lim_{[n] \in \mathbf \Delta} \Ind \left( \Coh^+( \fU_n ) \right) \arrow{dd}{\rL_{\fU_\bullet}} \\
			\Coh^+_{\mathrm{loc}}( \fX )  \arrow{rr}[near start]{\ff_{\bullet}^{\circ *}} \arrow[hook]{dr} & & \lim_{[n] \in \mathbf \Delta} \Coh^+_{\mathrm{loc}}(\fU_n) \arrow[hook]{dr} \\
			{} & \Ind \left( \Coh^+_{\mathrm{loc}} (\fX) \right) \arrow[leftarrow,crossing over]{uu} \arrow{rr}{\ff_{\bullet}^{\circ *}} & & \lim_{[n] \in \mathbf \Delta} \Ind \left(\Coh^+_{\mathrm{loc}} ( \fU_n ) \right) .
		\end{tikzcd}
	\end{equation}
	First of all, we observe that the diagonal functors are all fully faithful.
	It is therefore enough to prove that the functor
	\[ \ff^{\circ *}_\bullet \colon \Ind( \Coh^+_{\mathrm{loc}}(( \fX ) ) \longrightarrow \lim_{ [n] \in \mathbf \Delta } \Ind( \Coh^+_{\mathrm{loc}}( \fU_n ) ) \]
	is fully faithful when restricted to $\Coh^+_{\mathrm{loc}}( \fX )$.
	As this functor admits a right adjoint $\ff^\circ_{\bullet *}$, it is in turn enough to verify that for every $\cF \in \Cohb_{\mathrm{loc}}( \cF )$ the unit transformation
	\[ \eta \colon \cF \longrightarrow \ff^\circ_{\bullet *} \ff^{\circ *}_\bullet( \cF ) \]
	is an equivalence.
	Proceeding by induction on the number of nonvanishing homotopy groups of $\cF$, we see that it is enough to deal with the case of $\cF \in \Cohh_{\mathrm{loc}}(\cF)$.
	
	As the functor $\rL_\fX \colon \Coh^+( \fX ) \to \Coh^+_{\mathrm{loc}}( \fX )$ is essentially surjective and $t$-exact, we can choose $\fF \in \Cohh(\fX)$ and an equivalence
	\[ \rL_\fX( \fF ) \simeq \cF . \]
	Moreover, the unit transformation
	\[ \fF \longrightarrow \ff_{\bullet*} \ff_\bullet^* \fF \]
	is an equivalence.
	It is therefore enough to check that the Beck-Chevalley transformation associated to the front square is an equivalence when evaluated on objects in $\lim_{\mathbf \Delta} \Cohh(\fU_n)$.
	This is exactly the content of \cref{prop:Beck_Chevalley}.
\end{proof}

\subsection{Categories of formal models}

Let $\fX \in \dfSch_{\kc}^{\mathrm{taft}}$ be a quasi-compact and quasi-separated derived $\kc$-adic scheme topologically almost of finite presentation.
We established in \cref{cor:existence_formal_models} that for any $\cF \in \Coh^+( \fX^\rig )$ the $\infty$-category of formal models $\FormalModels( \cF )$ is non-empty.
Actually, we can use \cref{cor:comparing_local_and_rigid_almost_perfect_complexes} to be more precise about the structure of $\FormalModels( \cF )$.
We are in particular interested in showing that it is filtered.
We start by recording the following immediate consequence of \cref{cor:comparing_local_and_rigid_almost_perfect_complexes}:

\begin{lem} \label{cor:fully_faithful_adjoint_to_rigg}
	Let $\fX \in \dfSch_{\kc}^{\mathrm{taft}}$ be a quasi-compact and quasi-separated derived $\kc$-adic scheme topologically almost of finite presentation.
	Then the functor
	\[ \rigg \colon \Ind( \Coh^+( \fX ) ) \longrightarrow \Ind( \Coh^+( \fX^\rig ) ) \]
	admits a right adjoint
	\[ j \colon \Ind( \Coh^+( \fX^\rig ) ) \longrightarrow \Ind( \Coh^+( \fX ) ) , \]
	which is furthermore fully faithful.
\end{lem}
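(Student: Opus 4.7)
The natural plan is to exploit the factorization provided by the earlier results: by definition of $\Coh^+_{\mathrm{loc}}(\fX)$ and \cref{prop:local_almost_perfect_complexes_formal_properties}, the generic fiber functor $\rigg \colon \Coh^+(\fX) \to \Coh^+(\fX^\rig)$ factors as
\[
	\Coh^+(\fX) \xrightarrow{\rL} \Coh^+_{\mathrm{loc}}(\fX) \xrightarrow{\Lambda} \Coh^+(\fX^\rig),
\]
where $\rL$ is the Verdier quotient by $\Coh^+_{\mathrm{nil}}(\fX)$, and where $\Lambda$ is an equivalence thanks to \cref{cor:comparing_local_and_rigid_almost_perfect_complexes} (this is precisely where the hypothesis that $\fX$ be quasi-compact and quasi-separated is used). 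Applying $\Ind$, which is a functor $\Catst \to \PrL_{\mathrm{st}}$ preserving exact sequences and equivalences, one obtains the corresponding factorization
\[
	\Ind(\Coh^+(\fX)) \xrightarrow{\Ind(\rL)} \Ind(\Coh^+_{\mathrm{loc}}(\fX)) \xrightarrow{\Ind(\Lambda)} \Ind(\Coh^+(\fX^\rig)),
\]
and $\Ind(\Lambda)$ is again an equivalence.

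It therefore suffices to produce a fully faithful right adjoint to $\Ind(\rL)$. First, since $\Ind(\rL)$ is an exact functor between presentable stable $\infty$-categories and commutes with filtered colimits by construction, it commutes with all small colimits; hence by the adjoint functor theorem it admits a right adjoint $j_{\rL}$. To see that $j_{\rL}$ is fully faithful, I would appeal to the standard dictionary between Verdier quotients and Bousfield localizations: the cofiber sequence
\[
	\Coh^+_{\mathrm{nil}}(\fX) \hookrightarrow \Coh^+(\fX) \xrightarrow{\rL} \Coh^+_{\mathrm{loc}}(\fX)
\]
in $\Catst$ becomes, after $\Ind$-completion, a fiber-and-cofiber sequence of presentable stable $\infty$-categories in which the middle term is the Bousfield localization of the first by the kernel generated by $\Ind(\Coh^+_{\mathrm{nil}}(\fX))$. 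Equivalently, the essential image of $j_{\rL}$ is the full subcategory of objects right-orthogonal to $\Ind(\Coh^+_{\mathrm{nil}}(\fX))$, and $j_{\rL}$ is fully faithful.

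Composing the right adjoints, one sets $j \coloneqq j_{\rL} \circ \Ind(\Lambda)^{-1}$; this is a right adjoint to $\Ind(\rigg)$, and it is fully faithful as the composition of an equivalence with a fully faithful functor. The only conceptual ingredient is \cref{cor:comparing_local_and_rigid_almost_perfect_complexes} (without which there is no equivalence $\Lambda$ to invoke); the remainder is a routine application of the general $\Ind$-completion formalism, so I expect no genuine obstacle in the argument.
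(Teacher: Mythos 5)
Your proposal is correct and follows essentially the same route as the paper's own proof: both factor $\rigg$ through the equivalence $\Lambda$ of \cref{cor:comparing_local_and_rigid_almost_perfect_complexes}, pass to $\Ind$-completions of the cofiber sequence defining $\Coh^+_{\mathrm{loc}}(\fX)$, and invoke the standard identification of $\Ind$-completed Verdier quotients with accessible Bousfield localizations (the paper cites \cite[Lemma 2.5 and Remark 2.6]{Hennion_Porta_Vezzosi_Formal_gluing} for exactly the dictionary you describe) to obtain the fully faithful right adjoint. The only cosmetic difference is that you make the intermediate adjoint $j_{\rL}$ and the composition $j = j_{\rL} \circ \Ind(\Lambda)^{-1}$ explicit, whereas the paper phrases the cofiber sequence as a pushout in $\Catst$ and concludes directly.
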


\begin{proof}
	\Cref{cor:comparing_local_and_rigid_almost_perfect_complexes} implies that the functor $\rigg$ induces the equivalence
	\[ \Lambda \colon \Coh^+_{\mathrm{loc}}( \fX ) \stackrel{\sim}{\longrightarrow} \Coh^+( \fX^\rig ) . \]
	In other words, we see that the diagram
	\[ \begin{tikzcd}
		\Coh^+_{\mathrm{nil}}( \fX ) \arrow{r} \arrow{d} & \Coh^+( \fX ) \arrow{d}{\rigg} \\
		0 \arrow{r} & \Coh^+( \fX^\rig )
	\end{tikzcd} \]
	is a pushout diagram in $\Cat_\infty^{\mathrm{st}}$.
	Passing to ind-completions, we deduce that $\Ind( \Coh^+( \fX^\rig ) )$ is a Verdier quotient of $\Ind( \Coh^+( \fX ) )$.
	Applying \cite[Lemma 2.5 and Remark 2.6]{Hennion_Porta_Vezzosi_Formal_gluing} we conclude that $\Ind( \Coh^+( \fX^\rig ) )$ is an accessible localization of $\Ind( \Coh^+( \fX ) )$.
	As these categories are presentable, we deduce that the localization functor $\rigg$ admits a fully faithful right adjoint, as desired.
\end{proof}

\begin{notation}
	Let $\fX \in \dfDM_{\kc}$.
	Given $\cF, \cG \in \Ind( \Coh^+(\fX) )$ we write $\Hom_{\fX}(\cF, \cG) \in \Mod_{\kc}$ for the $\kc$-enriched stable mapping space in $\Ind(\Coh^+(\fX))$.
\end{notation}

\begin{lem} \label{lem:hom_to_nilpotent_is_nilpotent}
	Let $\fX \in \dfSch_{\kc}^{\mathrm{taft}}$ be a quasi-compact and quasi-separated derived $\kc$-adic scheme topologically almost of finite presentation.
	Let $\cF \in \Coh^+(\fX)$ and $\cG \in \Coh^+_{\mathrm{nil}}(\fX)$.
	Then
	\[ \Hom_\fX(\cF, \cG) \otimes_{\kc} k \simeq 0 . \]
	In other words, $\Hom_{\fX}(\cF, \cG)$ is $\mathfrak m$-nilpotent in $\Mod_{\kc}$.
\end{lem}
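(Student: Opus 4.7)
The plan is to interpret $- \otimes_{\kc} k$ as inverting a nonzero element $t \in \mathfrak m$ (possible since $k$ is a rank-1 non-archimedean field, hence $k = \kc[1/t]$ for any $0 \neq t \in \mathfrak m$). Thus for any $\kc$-module spectrum $M$ we have
\[ M \otimes_{\kc} k \simeq \colim\bigl(M \xrightarrow{t} M \xrightarrow{t} M \to \cdots\bigr) . \]
Applying this to the mapping spectrum, I would use the fact that $\cF \in \Coh^+(\fX)$ is a compact object of $\Ind(\Coh^+(\fX))$ to move the filtered colimit inside the $\Hom$:
\[ \Hom_\fX(\cF, \cG) \otimes_{\kc} k \simeq \Hom_{\Ind(\Coh^+(\fX))}\bigl(\cF, \cG[1/t]\bigr), \]
where $\cG[1/t] \coloneqq \colim(\cG \xrightarrow{t} \cG \xrightarrow{t} \cdots)$ is computed in $\Ind(\Coh^+(\fX))$. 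This reduces the problem to showing $\cG[1/t] \simeq 0$.

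To establish the vanishing of $\cG[1/t]$, I would appeal to \cref{lem:characterization_nilpotent_almost_perfect_modules}: since $\cG \in \Coh^+_{\mathrm{nil}}(\fX)$, every homotopy sheaf $\pi_i(\cG)$ is annihilated by some power of $\mathfrak m$, and in particular by some power $t^{N_i}$ of $t$. Filtered colimits are $t$-exact in $\Ind(\Coh^+(\fX))$ (the $t$-structure is inherited from $\Coh^+(\fX)$ and filtered colimits are exact in any Grothendieck-type presentable stable setting), so
\[ \pi_i\bigl(\cG[1/t]\bigr) \;\simeq\; \colim\bigl(\pi_i(\cG) \xrightarrow{t} \pi_i(\cG) \xrightarrow{t} \cdots\bigr) \;=\; 0 \]
for every $i \in \Z$.

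Finally, since $\cG$ is bounded below (as an almost perfect complex), the colimit $\cG[1/t]$ is also bounded below. Convergence of the Postnikov tower in the right-complete $t$-structure on $\Ind(\Coh^+(\fX))$ then forces a bounded-below object with vanishing homotopy groups to be zero, so $\cG[1/t] \simeq 0$ and the lemma follows. The only real subtlety is the interchange of the sequential colimit defining $[1/t]$ with the mapping spectrum, which is handled cleanly by the compactness of $\cF$; once inside the ind-category the rest is a direct computation on homotopy sheaves using the nilpotence characterization.
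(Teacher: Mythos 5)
Your reduction is correct as far as it goes: since the valuation has rank $1$, $k = \kc[1/t]$ for any nonzero $t \in \mathfrak m$, compactness of $\cF$ in $\Ind(\Coh^+(\fX))$ legitimately moves the telescope inside the mapping spectrum, and the computation $\pi_i(\cG[1/t]) \simeq 0$ (as ind-objects of the heart) is valid by \cref{lem:characterization_nilpotent_almost_perfect_modules}. The gap is the final step: the $t$-structure on $\Ind(\Coh^+(\fX))$ is \emph{not} separated, Postnikov towers do not converge there, and a bounded-below ind-object all of whose homotopy objects vanish need not be zero. Indeed, a sequential ind-object $\colim(X_0 \to X_1 \to \cdots)$ vanishes in an ind-category if and only if for every $n$ some transition map $X_n \to X_m$ is nullhomotopic; for your telescope this says that a \emph{single} power $t^m$ acts nullhomotopically on $\cG$, i.e.\ that there is a uniform torsion exponent for all the sheaves $\pi_i(\cG)$ simultaneously. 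Nilpotence only provides an exponent $N_i$ for each $i$ separately, and almost perfect complexes may be unbounded above. Concretely, take $\fX = \Spf(\kc)$ with $\kc$ a discrete valuation ring with uniformizer $t$, and $\cG = \bigoplus_{i \ge 0} (\kc/t^i)[i]$: this is almost perfect (bounded below, with finitely generated homotopy in each degree) and $\mathfrak m$-nilpotent, yet $t^m$ induces a nonzero map on $\pi_i(\cG) = \kc/t^i$ for every $i > m$, so no power of $t$ is nullhomotopic on $\cG$; hence $\cG[1/t] \not\simeq 0$ in $\Ind(\Coh^+(\fX))$, even though all of its homotopy objects vanish exactly as you computed. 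The implication you invoke fails precisely where you need it.

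Structurally, your intermediate claim $\cG[1/t] \simeq 0$ is (for principal $\mathfrak m$) exactly the statement $\cG^\loc \simeq 0$ of \cref{lem:loc_nilpotent}, which the paper deduces \emph{from} the present lemma via \cref{cor:base_change_hom}; by compact generation of $\Ind(\Coh^+(\fX))$ the two statements are essentially equivalent, so a direct homotopy-group proof of the ind-vanishing cannot serve as an independent input. The paper's route is genuinely different: it uses quasi-compactness to write $\Hom_\fX(\cF,\cG)$ as a \emph{finite} limit over a formal Zariski cover, commutes the exact functor $-\otimes_{\kc} k$ through that finite limit, and in the affine case performs the localization inside $\Mod_A$ --- an honest module category whose $t$-structure \emph{is} separated, so that the homotopy-group reasoning you attempt is valid there --- via base change along the Zariski open immersion $A \to A \otimes_{\kc} k$. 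Your argument does become correct when $\cG$ is in addition cohomologically bounded, since then $\max_i N_i$ is a uniform exponent and the transition maps of the telescope are eventually null; the unbounded case is genuinely delicate (your counterexample in fact also probes the affine base-change step for unbounded $\cG$, a subtlety the authors themselves flag in the remark following \cref{cor:base_change_hom}), and it is precisely the case your proposal does not reach.
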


\begin{proof}
	Since $\fX$ is quasi-compact, we can find a finite formal Zariski cover $\mathfrak U_i = \Spf(A_i)$ by formal affine schemes.
	Let $\mathfrak U_\bullet$ be the \v{C}ech nerve.
	Since this is a formal Zariski cover, there exists $m \gg 0$ such that
	\[ \Hom_{\fX}(\cF, \cG) \simeq \lim_{[n] \in \mathbf \Delta_{\le m}} \Hom_{\fU_n}( \cF|_{\fU_n}, \cG|_{\fU_n} ) . \]
	Since the functor $- \otimes_{\kc} k \colon \Mod_{\kc} \to \Mod_k$ is exact, it commutes with finite limits.
	Therefore, we see that it is enough to prove that the conclusion holds after replacing $\fX$ by $\fU_m$.
	Since $\fX$ is quasi-compact and quasi-separated, we see that each $\fU_m$ is quasi-compact and separated.
	In other words, we can assume from the very beginning that $\fX$ is quasi-compact and separated.
	In this case, each $\fU_m$ will be formal affine, and therefore we can further reduce to the case where $\fX$ is formal affine itself.
	
	Assume therefore $\fX = \Spf(A)$.
	In this case, $\Coh^+(\fX) \simeq \Coh^+(A)$ lives fully faithfully inside $\Mod_A$.
	Notice that $A \to A \otimes_{\kc} k$ is a Zariski open immersion.
	Therefore,
	\[ \Hom_A(\cF, \cG) \otimes_{\kc} k \simeq \Hom_A(\cF, \cG) \otimes_A (A \otimes_{\kc} k) \simeq \Hom_A( \cF \otimes_A \kc, \cG \otimes_A \kc ) \simeq 0 . \]
	Thus, the proof is complete.
\end{proof}

\begin{cor} \label{cor:base_change_hom}
	Let $\fX$ be as in the previous lemma.
	Given $\cF, \cG \in \Coh^+(\fX)$, the canonical map
	\[ \Hom_{\fX}( \cF, \cG ) \otimes_{\kc} k \longrightarrow \Hom_{\fX^\rig}( \cF^\rig, \cG^\rig ) \]
	is an equivalence.
\end{cor}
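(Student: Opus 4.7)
The plan is to leverage the Verdier quotient structure on $\rigg$ established in \cref{cor:fully_faithful_adjoint_to_rigg} in order to reduce the statement to \cref{lem:hom_to_nilpotent_is_nilpotent}. Let $j \colon \Ind(\Coh^+(\fX^\rig)) \to \Ind(\Coh^+(\fX))$ denote the fully faithful right adjoint to $\rigg$ furnished by that corollary. For $\cG \in \Coh^+(\fX)$, the unit of this adjunction fits into a fiber sequence
\[ \cG_{\mathrm{nil}} \longrightarrow \cG \longrightarrow j(\cG^\rig) \]
in $\Ind(\Coh^+(\fX))$, and the fiber $\cG_{\mathrm{nil}}$ lies in $\Ind(\Coh^+_{\mathrm{nil}}(\fX))$ because $\Ind(\Coh^+_{\mathrm{nil}}(\fX))$ is the kernel of the Verdier quotient $\rigg$.

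Next, I would apply the functor $\Hom_\fX(\cF, -)$ to this sequence and use the adjunction $\rigg \dashv j$ to rewrite the rightmost term. This yields a fiber sequence of $\kc$-modules
\[ \Hom_\fX(\cF, \cG_{\mathrm{nil}}) \longrightarrow \Hom_\fX(\cF, \cG) \longrightarrow \Hom_{\fX^\rig}(\cF^\rig, \cG^\rig) . \]
Tensoring with $k$ over $\kc$ preserves fiber sequences. The rightmost term is a module over $\Gamma(\fX^\rig, \cO_{\fX^\rig})$ and hence already a $k$-module, so $-\otimes_{\kc} k$ leaves it unchanged. It therefore suffices to show that $\Hom_\fX(\cF, \cG_{\mathrm{nil}}) \otimes_{\kc} k$ vanishes; the boundary map of the resulting sequence will then give an equivalence that, upon unravelling definitions, is identified with the canonical comparison map in the statement.

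The vanishing of $\Hom_\fX(\cF, \cG_{\mathrm{nil}}) \otimes_{\kc} k$ is the only substantive point. Since $\cF \in \Coh^+(\fX)$ is compact in $\Ind(\Coh^+(\fX))$, the functor $\Hom_\fX(\cF, -)$ commutes with filtered colimits; combined with the commutation of $-\otimes_{\kc} k$ with filtered colimits, this reduces the vanishing to the case where the second argument lies in $\Coh^+_{\mathrm{nil}}(\fX)$, which is precisely the content of \cref{lem:hom_to_nilpotent_is_nilpotent}. I expect the main bookkeeping hurdle to be the identification of the connecting map of the above fiber sequence with the canonical comparison map of the corollary statement, but this is a formal consequence of the compatibility between the ind-extended adjunction $\rigg \dashv j$ and the base change $\kc \to k$.
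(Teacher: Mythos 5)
Your proposal is correct and takes essentially the same route as the paper's proof: the same fiber sequence $i R(\cG) \to \cG \to j(\cG^\rig)$ arising from the unit of the ind-extended adjunction $\rigg \dashv j$, the same application of $\Hom_{\fX}(\cF,-)$ and $-\otimes_{\kc} k$, and the same reduction of the vanishing of $\Hom_{\fX}(\cF, \cG_{\mathrm{nil}}) \otimes_{\kc} k$ to \cref{lem:hom_to_nilpotent_is_nilpotent} via compactness of $\cF$ in $\Ind(\Coh^+(\fX))$ and commutation of the tensor with filtered colimits. The only cosmetic difference is that you justify the invariance of the rightmost term under $-\otimes_{\kc} k$ via its $k$-module structure, while the paper cites the fact that $\kc \to k$ is a Zariski open immersion --- the same underlying fact.
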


\todo{It's kind of miraculous that we can prove this corollary also for $\cG$ unbounded. See \cite[6.5.3.7]{Lurie_SAG}. Check very carefully the proof.}

\begin{proof}
	Denote by $R \colon \Ind(\Coh^+(\fX)) \to \Ind(\Coh^+_{\mathrm{nil}}(\fX))$ the right adjoint to the inclusion
	\[ i \colon \Ind(\Coh^+_{\mathrm{nil}}(\fX)) \hookrightarrow \Ind(\Coh^+(\fX)) . \]
	Then for any $\cG \in \Coh^+(\fX)$ we have a fiber sequence
	\[ i R(\cG) \longrightarrow \cG \longrightarrow j( \cG^\rig ) . \]
	In particular, we obtain a fiber sequence
	\[ \Hom_{\fX}( \cF, i R(\cG) ) \longrightarrow \Hom_{\fX}( \cF, \cG ) \longrightarrow \Hom_{\fX}( \cF, j( \cG^\rig ) ) . \]
	Now observe that
	\[ \Hom_{\fX}( \cF, j( \cG^\rig) ) \simeq \Hom_{\fX^\rig}(\cF^\rig, \cG^\rig ) . \]
	Notice also that since $\kc \to k$ is an open Zariski immersion, $\Hom_{\fX^\rig}(\cF^\rig, \cG^\rig) \otimes_{\kc} k \simeq \Hom_{\fX^\rig}(\cF^\rig, \cG^\rig)$.
	In particular, applying $- \otimes_{\kc} k \colon \Mod_{\kc} \to \Mod_k$ we find a fiber sequence
	\[ \Hom_{\fX}( \cF, i R(\cG ) ) \otimes_{\kc} k \longrightarrow \Hom_{\fX}(\cF, \cG) \otimes_{\kc} k \longrightarrow \Hom_{\fX^\rig}(\cF^\rig, \cG^\rig) . \]
	It is therefore enough to check that $\Hom_{\fX}(\cF, i R(\cG)) \otimes_{\kc} k \simeq 0$.
	Since $i$ is a left adjoint, we can write
	\[ i R(\cG) \simeq \colim_{\alpha \in I} \cG_\alpha , \]
	where $I$ is filtered and $\cG_\alpha \in \Coh^+_{\mathrm{nil}}(\fX)$.
	As $\cF$ is compact in $\Ind(\Coh^+(\fX))$, we find
	\[ \Hom_{\fX}( \cF, i R(\cG) ) \otimes_{\kc} k \simeq \left( \colim_{\alpha \in I} \Hom_{\fX}( \cF, \cG_\alpha ) \right) \otimes_{\kc} k \simeq \colim_{\alpha \in I} \Hom_{\fX}(\cF, \cG_\alpha) \otimes_{\kc} k . \]
	Since each $\cG_\alpha$ belongs to $\Coh^+_{\mathrm{nil}}(\fX)$, \cref{lem:hom_to_nilpotent_is_nilpotent} implies that $\Hom_{\fX}(\cF, \cG_\alpha) \otimes_{\kc} k \simeq 0$.
	The conclusion follows.
\end{proof}

\begin{rem}
	Notice that \cref{cor:base_change_hom} holds without no bounded conditions on the cohomological amplitude on the considered almost perfect complexes. The key ingredient is the fact that the morphism $\Spec k \hookrightarrow \Spec \kc$ is an open immersion. Compare with \cite[Lemma 6.5.3.7]{Lurie_SAG}.
\end{rem}

\begin{construction}
	Choose generators $t_1, \ldots, t_n$ for $\mathfrak m$.
	We consider $\mathbb N^n$ as a poset with order given by
	\[ (m_1, \ldots, m_n) \le (m_1', \ldots, m_n') \iff m_1 \le m_1', m_2 \le m_2' , \ldots , m_n \le m_n' \]
	Introduce the functor
	\[ K \colon \mathbb N^n\longrightarrow \Ind( \Cohh( \Spf(\kc) ) ) \]
	defined as follows: $K$ sends every object to $\kc$, and it sends the morphism $\mathbf m \le \mathbf m'$ to multiplication by $t^{\mathbf m' - \mathbf m}$.
	By abuse of notation, we still denote the composition of $K$ with the inclusion $\Ind( \Cohh(\kc ) ) \to \Ind( \Coh^+(\kc) )$ by $K$.
	
	Let now $\fX \in \dfSch_{\kc}^{\mathrm{taft}}$ be a quasi-compact and quasi-separated derived $\kc$-adic scheme topologically almost of finite presentation.
	Let $\cF \in \Coh^+(\fX)$.
	The natural morphism $q \colon \fX \to \Spf(\kc)$ induces a functor
	\[ q^* \colon \Ind(\Coh^+(\Spf(\kc))) \longrightarrow \Ind( \Coh^+( \fX ) ) . \]
	We define the functor $K_\cF$ as
	\[ K_\cF \coloneqq q^*(K(-)) \otimes \cF \colon \mathbb N^n \longrightarrow \Ind(\Coh^+(\fX)) . \]
	We let $\cF^\loc$ denote the colimit of the functor $K_\cF$.
	
	Let $\cG \in \Coh^+(\fX^\rig)$ and let $\alpha \colon \cF^\rig \to \cG$ be a given map.
	Notice that the natural map
	\[ \cF^\rig \longrightarrow \colim_{\mathbb N^n} (K_\cF(-))^\rig \]
	is an equivalence.
	Therefore $\alpha$ induces a cone
	\[ (K_\cF(-))^\rig \longrightarrow \cG , \]
	which is equivalent to the given of a cone
	\[ K_\cF(-) \longrightarrow j( \cG ) . \]
	Specializing this construction for $\alpha = \mathrm{id}_{\cF^\rig}$, we obtain a canonical map
	\[ \gamma_\cF \colon \cF^\loc \longrightarrow j( \cF^\rig ) . \]
\end{construction}

\begin{lem} \label{lem:loc_nilpotent}
	Let $\fX \in \dfSch_{\kc}^{\mathrm{taft}}$ be a quasi-compact and quasi-separated derived $\kc$-adic scheme topologically almost of finite presentation.
	Let $\cF \in \Coh^+_{\mathrm{nil}}(\fX)$.
	Then $\cF^\loc \simeq 0$.
\end{lem}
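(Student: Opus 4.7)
The plan is to compute $\cF^\loc$ homotopy-group-wise and use the characterization of $\mathfrak m$-nilpotent almost perfect complexes supplied by \cref{lem:characterization_nilpotent_almost_perfect_modules}.

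First, I would observe that the indexing poset $\mathbb N^n$ is filtered, and that filtered colimits in $\Ind(\Coh^+(\fX))$ are computed in the underlying stable $\infty$-category of $\cO_\fX$-modules. In particular, filtered colimits commute with the homotopy group functors $\pi_i$. Thus for each $i \in \mathbb Z$ one obtains
\[
    \pi_i(\cF^\loc) \simeq \colim_{\mathbf m \in \mathbb N^n} \pi_i(K_\cF(\mathbf m)),
\]
where the transition map from $\mathbf m$ to $\mathbf m'$ is given by multiplication by the monomial $t^{\mathbf m' - \mathbf m} = t_1^{m_1' - m_1} \cdots t_n^{m_n' - m_n}$ acting on $\pi_i(\cF)$.

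Next, by \cref{lem:characterization_nilpotent_almost_perfect_modules}, since $\cF \in \Coh^+_{\mathrm{nil}}(\fX)$, each coherent sheaf $\pi_i(\cF)$ is annihilated by some power $\mathfrak m^{N_i}$ of the finitely generated ideal $\mathfrak m = (t_1, \ldots, t_n)$. Consequently, for any $\mathbf m$ and any $\mathbf m' \geq \mathbf m$ with $m_j' - m_j \geq N_i$ for all $j$, the monomial $t^{\mathbf m' - \mathbf m}$ lies in $\mathfrak m^{n N_i} \subseteq \mathfrak m^{N_i}$, so it acts as zero on $\pi_i(\cF)$. Since such $\mathbf m'$ form a cofinal subposet of $\mathbb N^n_{\geq \mathbf m}$, the transition maps in the filtered diagram computing $\pi_i(\cF^\loc)$ are eventually zero, and therefore the colimit vanishes.

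This shows $\pi_i(\cF^\loc) \simeq 0$ for all $i \in \mathbb Z$. As the $t$-structure on $\Ind(\Coh^+(\fX))$ is both left and right complete on bounded-below objects, and since the diagram $K_\cF$ is uniformly bounded below (all terms equivalent to $\cF$), this suffices to conclude that $\cF^\loc \simeq 0$. The only subtle point is confirming that the filtered colimit preserves the left bounded-below condition so that vanishing of all $\pi_i$ implies vanishing of the object; this is immediate since $\pi_i(K_\cF(\mathbf m)) = \pi_i(\cF) = 0$ for $i \ll 0$ uniformly in $\mathbf m$.
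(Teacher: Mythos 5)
Your homotopy-level computation is fine as far as it goes: the induced $t$-structure on $\Ind(\Coh^+(\fX))$ is compatible with filtered colimits, each $\pi_i(\cF)$ is killed by a power of $\mathfrak m$ by \cref{lem:characterization_nilpotent_almost_perfect_modules}, so the transition maps in the diagram $\pi_i(K_\cF(-))$ are eventually zero and $\pi_i(\cF^\loc) \simeq 0$ for all $i$. The gap is everything after that. First, your opening claim that filtered colimits in $\Ind(\Coh^+(\fX))$ are ``computed in'' $\cO_\fX$-modules is false: the canonical functor $\Ind(\Coh^+(\fX)) \to \cO_\fX\textrm{-}\Mod$ preserves filtered colimits but is not conservative (almost perfect complexes are not compact in the module category; this is the same phenomenon as $\mathrm{IndCoh} \to \mathrm{QCoh}$ failing to be conservative over a singular base), so vanishing downstairs says nothing about $\cF^\loc$, which must vanish \emph{in} $\Ind(\Coh^+(\fX))$ — that is where it is later compared with $j(\cF^\rig)$ in \cref{lem:computation_j}. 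Second, and decisively, the Whitehead-type principle in your last paragraph — bounded below plus vanishing homotopy objects implies zero — genuinely fails in $\Ind(\Coh^+(\fX))$: the induced $t$-structure on the Ind-category is not left complete in general. A concrete counterexample: over $A = k[x]/(x^2)$, let $X$ be the ind-object given by the tower $k \to k[1] \to k[2] \to \cdots$ whose transition maps are a fixed nonzero class $u \in \Ext^1_A(k,k)$. Every transition vanishes on homotopy objects, so $\pi_i(X) \simeq 0$ for all $i$; yet since $k$ is compact in $\Ind(\Coh^+(A))$ and $\Ext^*_A(k,k) \cong k[u]$ with $u$ acting injectively, one has $\pi_0\Map(k,X) \simeq \colim\left( k \xrightarrow{u} k \xrightarrow{u} \cdots \right) \simeq k \neq 0$, so $X \neq 0$. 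Homotopy objects simply do not detect vanishing in this Ind-category, and your constant diagram $K_\cF$ with $\cF$ unbounded above (nilpotency exponents of $\pi_i(\cF)$ growing with $i$) is precisely the situation where this danger is live.

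The paper's proof is structured exactly to avoid this: it tests $\cF^\loc$ against the compact generators $\cG \in \Coh^+(\fX)$, where compactness gives $\Hom_\fX(\cG, \cF^\loc) \simeq \colim_{\mathbb N^n} \Hom_\fX(\cG, K_\cF(-)) \simeq \Hom_\fX(\cG,\cF) \otimes_{\kc} k$, and then invokes \cref{cor:base_change_hom} to identify this with $\Hom_{\fX^\rig}(\cG^\rig, \cF^\rig) \simeq 0$. That corollary — base change for the full mapping spectrum, with no boundedness hypothesis on the complexes — is the genuinely nontrivial input your argument skips; it is what upgrades the degreewise nilpotence of the $\pi_i(\cF)$ (with unbounded exponents) to a statement strong enough to kill $\cF^\loc$ in $\Ind(\Coh^+(\fX))$. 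To repair your proof you would either have to establish left completeness of the relevant $t$-structure (false in general, per the example above) or reintroduce the compact-generator test, at which point you have reproduced the paper's argument.
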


\begin{proof}
	For any $\cG \in \Coh^+(\fX)$, we write $\Hom_{\fX}(\cG, \cF) \in \Mod_{\kc}$ for the $\kc$-enriched mapping space.
	As $\cG$ is compact in $\Ind(\Coh^+(\fX))$, we have
	\[ \Hom_{\fX}( \cG, \cF^\loc ) \simeq \colim_{\mathbb N^n} \Hom_\fX( \cG, K_\cF(-) ) \simeq \Hom_\fX( \cG, \cF ) \otimes_{\kc} k . \]
	\Cref{cor:base_change_hom} implies that
	\[ \Hom_\fX( \cG, \cF ) \otimes_{\kc} k \simeq \Hom_{\fX^\rig}( \cG^\rig, \cF^\rig ) \simeq 0 . \]
	It follows that $\cF^\loc \simeq 0$.
\end{proof}

\begin{lem} \label{lem:colimit_torsion_free}
	Let $\fX \in \dfSch_{\kc}^{\mathrm{taft}}$ be a quasi-compact and quasi-separated derived $\kc$-adic scheme topologically almost of finite presentation.
	Let $\cF \in \Coh^+(\fX)$.
	Then for any $\cG \in \Coh^+_{\mathrm{nil}}(\fX)$, one has
	\[ \Map_{\Ind(\Coh^+(\fX))}( \cG, \cF^\loc ) \simeq 0 . \]
\end{lem}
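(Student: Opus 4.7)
The plan is to exploit compactness of $\cG$ in $\Ind(\Coh^+(\fX))$ together with the base change identity \cref{cor:base_change_hom} and the explicit description of $\cF^\loc$ as a filtered colimit. The argument will essentially mirror the proof of \cref{lem:loc_nilpotent} but with the roles of source and target swapped.

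Concretely, I would first observe that $\cG$ is compact in $\Ind(\Coh^+(\fX))$, hence
\[ \Map_{\Ind(\Coh^+(\fX))}(\cG, \cF^\loc) \simeq \colim_{\mathbf{m} \in \mathbb{N}^n} \Map_{\Ind(\Coh^+(\fX))}(\cG, K_\cF(\mathbf{m})) . \]
Since $q^*(K(\mathbf{m})) \simeq \cO_\fX$, each term is equivalent to $\Hom_\fX(\cG, \cF)$, and the transition morphisms in the colimit are given by the $\kc$-linear actions of multiplication by $t^{\mathbf{m}' - \mathbf{m}}$ on the mapping spectrum. Consequently the colimit is identified with the localization
\[ \Hom_\fX(\cG, \cF)[t_1^{-1}, \ldots, t_n^{-1}] \simeq \Hom_\fX(\cG, \cF) \otimes_{\kc} k , \]
where the last equivalence uses that inverting any nonzero element of $\mathfrak{m}$ in $\kc$ already produces $k$ (a consequence of the rank-one valuation).

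To finish, I would invoke \cref{cor:base_change_hom} applied to $\cG, \cF \in \Coh^+(\fX)$, yielding
\[ \Hom_\fX(\cG, \cF) \otimes_{\kc} k \simeq \Hom_{\fX^\rig}(\cG^\rig, \cF^\rig) . \]
But $\cG \in \Coh^+_{\mathrm{nil}}(\fX)$ means precisely that $\cG^\rig \simeq 0$, so the right-hand side vanishes, giving the claim.

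The main subtlety, rather than a genuine obstacle, is verifying that the filtered colimit over $\mathbb{N}^n$ with multiplication-by-$t^{\mathbf m'-\mathbf m}$ transition maps really computes the tensor product $(-) \otimes_{\kc} k$; this relies on the rank-one hypothesis on the valuation and on the fact that $\mathfrak{m}$ is finitely generated by the $t_i$. Apart from this, everything is a routine application of compactness and of \cref{cor:base_change_hom}; the real work was done in establishing that corollary.
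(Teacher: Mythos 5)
Your proof is correct, but it takes a different route from the paper's. The paper reduces to $\pi_0$ (via shifts of $\cF$), takes a representative $\alpha \colon \cG \to \cF^\loc$, factors it through some stage $\cF$ of the colimit by compactness of $\cG$, and then uses functoriality of $(-)^\loc$ together with \cref{lem:loc_nilpotent} to conclude that $\alpha$ factors through $\cG^\loc \simeq 0$ and is hence nullhomotopic. You instead compute the entire mapping object in one stroke: compactness identifies $\Hom_{\fX}(\cG, \cF^\loc)$ with $\colim_{\mathbb N^n} \Hom_\fX(\cG, K_\cF(-)) \simeq \Hom_\fX(\cG,\cF) \otimes_{\kc} k$, and then \cref{cor:base_change_hom} — applied with the $\mathfrak m$-nilpotent object in the \emph{source}, where the paper's \cref{lem:loc_nilpotent} applies it with the nilpotent object in the \emph{target} — gives $\Hom_{\fX^\rig}(\cG^\rig, \cF^\rig) \simeq 0$ since $\cG^\rig \simeq 0$. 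In effect you have inlined the computation from \cref{lem:loc_nilpotent} with source and target swapped, which buys you several things: no reduction to homotopy classes and no nullhomotopy bookkeeping; independence from \cref{lem:loc_nilpotent} itself; and a stronger intermediate statement, namely the formula $\Hom_\fX(\cG, \cF^\loc) \simeq \Hom_{\fX^\rig}(\cG^\rig, \cF^\rig)$ for arbitrary $\cG \in \Coh^+(\fX)$, which is a compact-object precursor of \cref{lem:computation_j}. The one step you rightly flag — that the filtered colimit over $\mathbb N^n$ with transition maps $t^{\mathbf m' - \mathbf m}$ computes $(-) \otimes_{\kc} k$, using the rank-one valuation so that inverting the generators of $\mathfrak m$ yields $k$ — is exactly the identification the paper itself uses without further comment inside the proof of \cref{lem:loc_nilpotent}, so it introduces no new burden.
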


\begin{proof}
	It is enough to prove that for every $i \ge 0$ we have
	\[ \pi_i \Map_{\Ind(\Coh^+(\fX))}( \cG, \cF^\loc ) \simeq 0 . \]
	Up to replacing $\cF$ by $\cF[i]$, we see that it is enough to deal with the case $i = 0$.
	Let therefore $\alpha \colon \cG \to \cF^\loc$ be a representative for an element in $\pi_0 \Map_{\Ind(\Coh^+(\fX))}( \cG, \cF^\loc )$.
	As $\cG$ is compact in $\Ind(\Coh^+(\fX))$, the map $\alpha$ factors as $\alpha' \colon \cG \to \cF$, and therefore it induces a map $\widetilde{\alpha} \colon \cG^\loc \to \cF^\loc$ making the diagram
	\[ \begin{tikzcd}
		\cG \arrow{r}{\alpha'} \arrow{d} & \cF \arrow{d} \\
		\cG^\loc \arrow{r}{\widetilde{\alpha}} & \cF^\loc
	\end{tikzcd} \]
	commutative, where both compositions are equivalent to $\alpha$.
	Now, \cref{lem:loc_nilpotent} implies that $\cG^\loc \simeq 0$, and therefore $\alpha$ is nullhomotopic, completing the proof.
\end{proof}

\begin{lem} \label{lem:computation_j}
	Let $\fX \in \dfSch_{\kc}^{\mathrm{taft}}$ be a quasi-compact and quasi-separated derived $\kc$-adic scheme topologically almost of finite presentation.
	Let $\cF \in \Coh^+(\fX)$.
	Then the canonical map
	\[ \gamma_\cF \colon \cF^\loc \longrightarrow j( \cF^\rig ) \]
	is an equivalence.
\end{lem}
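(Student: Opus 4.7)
The plan is to reduce the statement to a Yoneda check on compact generators and then exploit the heavy lifting already done in \cref{cor:base_change_hom}. Since the essentially small stable $\infty$-category $\Coh^+(\fX)$ is a set of compact generators of $\Ind(\Coh^+(\fX))$, it suffices to prove that for every $\cG \in \Coh^+(\fX)$ the induced map
$$\Map_{\Ind(\Coh^+(\fX))}(\cG, \cF^\loc) \longrightarrow \Map_{\Ind(\Coh^+(\fX))}(\cG, j(\cF^\rig))$$
is an equivalence.

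For the target, I would first use the adjunction $\rigg \dashv j$ provided by \cref{cor:fully_faithful_adjoint_to_rigg} and then apply \cref{cor:base_change_hom} to obtain the chain of identifications
$$\Map(\cG, j(\cF^\rig)) \simeq \Hom_{\fX^\rig}(\cG^\rig, \cF^\rig) \simeq \Hom_\fX(\cG, \cF) \otimes_{\kc} k.$$
For the source, compactness of $\cG$ lets me commute $\Map(\cG, -)$ past the filtered colimit defining $\cF^\loc$, yielding
$$\Map(\cG, \cF^\loc) \simeq \colim_{\mathbf m \in \mathbb N^n} \Hom_\fX(\cG, \cF),$$
where the transition corresponding to $\mathbf m \leq \mathbf m'$ is multiplication by $t^{\mathbf m' - \mathbf m}$. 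Since $k$ is obtained from $\kc$ by inverting $t_1, \dots, t_n$, this colimit is naturally $\Hom_\fX(\cG, \cF) \otimes_{\kc} k$.

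It remains to check that the map induced by $\gamma_\cF$ corresponds, under the two identifications above, to the identity on $\Hom_\fX(\cG, \cF) \otimes_{\kc} k$; this is a direct unwinding of the construction of $\gamma_\cF$ from the universal property of $\cF^\loc$ applied to the cone $(K_\cF)^\rig \to \cF^\rig$ determined by $\id_{\cF^\rig}$. The conceptual obstacle has already been absorbed into \cref{cor:base_change_hom}: identifying $\Hom_{\fX^\rig}(\cG^\rig, \cF^\rig)$ with $\Hom_\fX(\cG, \cF) \otimes_{\kc} k$ is the non-formal input; everything after that is Yoneda bookkeeping. An alternative route, which I would keep in reserve, is to apply $(-)^\loc$ to the fiber sequence $iR(\cF) \to \cF \to j(\cF^\rig)$, use \cref{lem:loc_nilpotent} together with commutation of $(-)^\loc$ with colimits to kill the nilpotent piece, and observe that $t_i$ acts invertibly on $j(\cF^\rig)$ so that $j(\cF^\rig)^\loc \simeq j(\cF^\rig)$; but the direct Yoneda approach above seems cleanest.
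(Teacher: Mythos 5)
Your proposal is correct, but it takes a genuinely different route from the paper. The paper never computes mapping spaces out of all compact generators: instead it tests only against $\cG \in \Coh^+_{\mathrm{nil}}(\fX)$, shows that both $\cF^\loc$ and $j(\cF^\rig)$ receive no maps from nilpotent objects (the first being the content of \cref{lem:colimit_torsion_free}, the second being immediate from the adjunction since $\cG^\rig \simeq 0$), and then runs an orthogonality argument on the fiber $\cH \coloneqq \fib(\gamma_\cF)$: since $\cH^\rig \simeq 0$, the fiber lies in $\Ind(\Coh^+_{\mathrm{nil}}(\fX))$, and since $\Hom_{\fX}(\cG,\cH) \simeq 0$ for every nilpotent $\cG$, it must vanish. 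Your Yoneda computation instead identifies \emph{both} sides as $\Hom_\fX(\cG,\cF) \otimes_{\kc} k$ for arbitrary $\cG \in \Coh^+(\fX)$ — the source via compactness and the colimit presentation of $(-) \otimes_{\kc} k$ (a computation that does appear verbatim in the paper, but inside the proof of \cref{lem:loc_nilpotent}), the target via the adjunction of \cref{cor:fully_faithful_adjoint_to_rigg} and \cref{cor:base_change_hom}. Both arguments ultimately rest on \cref{cor:base_change_hom} as the non-formal input, so the approaches are cousins, and each has a distinct advantage: yours is more direct and makes \cref{lem:colimit_torsion_free} unnecessary for this lemma, but it genuinely requires the final coherence step you flag — checking that $\gamma_\cF$ induces, under your two identifications, the canonical base-change map of \cref{cor:base_change_hom} (unwinding the cone $(K_\cF(-))^\rig \to \cF^\rig$, the component at index $\mathbf m$ is $t^{-\mathbf m}$ composed with $\rigg$, which matches the standard presentation of $(-)\otimes_{\kc} k$, so this does go through). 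The paper's vanishing argument is property-level and sidesteps any identification of maps, which is why it can afford to be vaguer about the construction of $\gamma_\cF$. Your reserve route — applying $(-)^\loc$ to the fiber sequence $iR(\cF) \to \cF \to j(\cF^\rig)$ from the proof of \cref{cor:base_change_hom}, killing the nilpotent piece by \cref{lem:loc_nilpotent} and colimit-commutation, and using that the $t_i$ act invertibly on $j(\cF^\rig)$ — is a valid third variant, again modulo identifying the resulting equivalence with $\gamma_\cF$.
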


\begin{proof}
	Let $\cG \in \Coh^+_{\mathrm{nil}}(\fX)$.
	Then
	\[ \Map_{\Ind(\Coh^+(\fX))}( \cG, j( \cF^\rig ) ) \simeq \Map_{\Ind(\Coh^+(\fX^\rig))}( \cG^\rig, \cF ) \simeq 0 . \]
	\Cref{lem:colimit_torsion_free} implies that the same holds true replacing $j(\cF^\rig)$ with $\cF^\loc$.
	As $\Coh^+_{\mathrm{nil}}(\fX)$ is a stable full subcategory of $\Coh^+(\fX)$, it follows that
	\[ \Hom_{\fX}(\cG, j(\cF)) \simeq \Hom_{\fX}(\cG, \cF^\loc ) \simeq 0 . \]
	Let $\cH \coloneqq \fib( \gamma_\cF )$.
	Then for any $\cG \in \Coh^+_{\mathrm{nil}}(\fX)$, one has
	\[ \Hom_{\fX}(\cG, \cH) \simeq 0 . \]
	On the other hand,
	\[ \cH^\rig \simeq \fib( \gamma_\cF^\rig ) \simeq 0 . \]
	It follows that $\cH \in \Ind( \Coh^+_{\mathrm{nil}}(\fX) )$, and hence that $\cH \simeq 0$.
	Thus, $\gamma_\cF$ is an equivalence.
\end{proof}

\begin{thm} \label{thm:formal_models_filtered}
	Let $\fX \in \dfSch_{\kc}$ be a quasi-compact and quasi-separated derived $\kc$-adic scheme.
	Let $\cF \in \Coh^+( \fX^\rig )$.
	Then the $\infty$-category $\FormalModels(\cF)$ of formal models for $\cF$ is non-empty and filtered.
\end{thm}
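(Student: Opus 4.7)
The non-emptiness is immediate from \cref{cor:existence_formal_models}, so the substantive content is filteredness. The plan is to identify $\FormalModels(\cF)$ as a full subcategory of an explicitly filtered $\infty$-category, in such a way that filteredness is inherited.

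First I would combine the fully faithful right adjoint $j$ (\cref{cor:fully_faithful_adjoint_to_rigg}) with the adjunction $\rigg \dashv j$ to identify
\[ \Coh^+(\fX)_{/\cF} \simeq \Coh^+(\fX) \times_{\Ind(\Coh^+(\fX))} \Ind(\Coh^+(\fX))_{/j(\cF)} , \]
under which $\FormalModels(\cF)$ is the full subcategory of pairs $(\fG, \tilde\alpha)$, with $\tilde\alpha \colon \fG \to j(\cF)$, whose adjoint $\fG^\rig \to \cF$ is an equivalence. Next, I would fix any formal model $\fP$ of $\cF$ (which exists by \cref{cor:existence_formal_models}) and invoke \cref{lem:computation_j} to write $j(\cF) \simeq \cF^{\loc} = \colim_{\mathbf{m} \in \mathbb{N}^n} K_\fP$ as a filtered colimit of copies of $\fP$. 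Standard properties of Ind-completions of small $\infty$-categories admitting finite colimits then imply that the ambient slice $\Coh^+(\fX)_{/j(\cF)}$ is filtered. Moreover, each coprojection $(\fP, \iota_{\mathbf{m}})$ lies in $\FormalModels(\cF)$: its adjoint $\fP^\rig \to \cF$ is a coprojection into a filtered colimit whose transition maps are multiplications by powers of $t$, which are equivalences over $\fX^\rig$.

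The concluding step is to observe that every object $(\fG, \tilde\alpha)$ of $\Coh^+(\fX)_{/j(\cF)}$ admits a morphism to some $(\fP, \iota_{\mathbf{m}}) \in \FormalModels(\cF)$: by compactness of $\fG$ in $\Ind(\Coh^+(\fX))$, the map $\tilde\alpha$ factors through $K_\fP(\mathbf{m})$ for some $\mathbf{m}$. Hence for any finite simplicial set $K$ and any diagram $K \to \FormalModels(\cF)$, composing with the full inclusion, extending to a cocone in the filtered ambient category, and post-composing the cone maps with an arrow from the tip into some $(\fP, \iota_{\mathbf{m}})$ yields, by fullness of the inclusion, a cocone landing entirely in $\FormalModels(\cF)$. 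The hard part is already encapsulated in \cref{lem:computation_j}: once $j(\cF)$ is realized concretely as a filtered colimit of formal models, the remaining argument for filteredness is formal.
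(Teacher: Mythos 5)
Your proof is correct and follows essentially the same route as the paper's: non-emptiness via \cref{cor:existence_formal_models}, the identification $j(\cF) \simeq \cF^{\loc}$ from \cref{lem:computation_j}, realizing $\FormalModels(\cF)$ as a full subcategory of the filtered slice $\Coh^+(\fX) \times_{\Ind(\Coh^+(\fX))} \Ind(\Coh^+(\fX))_{/j(\cF)}$, and using compactness of $\fG$ to factor the structural map through a stage $K_\fP(\mathbf{m})$ of the colimit. You merely make explicit a few steps the paper leaves implicit (that the coprojections $(\fP,\iota_{\mathbf{m}})$ are themselves formal models because the transition maps $t^{\mathbf{m}'-\mathbf{m}}$ become equivalences on the generic fiber, and the formal argument that filteredness passes to a full subcategory into which every object maps), which is a sound elaboration rather than a different approach.
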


\begin{proof}
	We know that $\FormalModels(\cF)$ is non-empty thanks to \cref{cor:existence_formal_models}.
	Pick one formal model $\cF \in \FormalModels(\cF)$.
	Then \cref{lem:computation_j} implies that the canonical map
	\[ \gamma_\cF \colon \cF^\loc \longrightarrow j( \cF^\rig ) \simeq j( \cF ) \]
	is an equivalence.
	We now observe that $\FormalModels(\cF)$ is by definition a full subcategory of
	\[ \Coh^+(\fX)_{/\cF} \coloneqq \Coh^+(\fX) \times_{\Ind(\Coh^+(\fX))} \Ind(\Coh^+(\fX))_{/j(\cF)} . \]
	As this $\infty$-category is filtered, it is enough to prove that every object $\cG \in \Coh^+(\fX)_{/\cF}$ admits a map to an object in $\FormalModels(\cF)$.
	Let $\alpha \colon \cG \to j(\cF)$ be the structural map.
	Using the equivalence $\gamma_\cF$ and the fact that $\cG$ is compact in $\Ind(\Coh^+(\fX))$, we see that $\alpha$ factors as $\cG \to \cF$, which belongs to $\FormalModels(\cF)$ by construction.
\end{proof}	

\begin{cor} \label{lift_p_maps}
	Let $X \in \dAnk$ and $f \colon \cF \to \cG$ be a morphism $\Coh^+(X)$. Suppose we are given a formal model $\fX$ for $X$ together with formal models $\cF, \cG \in \Coh^+(\fX)$ for $\cF$ and $\cG$, respectively.
	Then there exists a morphism $\ff \colon \cF' \to \cG' $ in the \infcat $\Coh^+(\fX)$ lifting
	\[ t_1^{m_1}  \dots   t_n^{m_n}   f \colon \cF \to \cG, \quad \text{in $\Coh^+(X)$} \]
	for suitable non-negative integers $m_1, \dots, m_n \geq 0$.
\end{cor}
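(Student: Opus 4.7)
The plan is to deduce this corollary directly from \cref{cor:base_change_hom} applied to the given formal models. Write $\widetilde{\cF}, \widetilde{\cG} \in \Coh^+(\fX)$ for the formal models supplied in the statement (there unfortunately also denoted $\cF, \cG$), reserving $\cF, \cG \in \Coh^+(X)$ for their generic fibres. The first step will be to invoke \cref{cor:base_change_hom} for the pair $(\widetilde{\cF}, \widetilde{\cG})$, yielding an equivalence of $\kc$-module spectra
\[
\Hom_\fX(\widetilde{\cF}, \widetilde{\cG}) \otimes_{\kc} k \;\simeq\; \Hom_{\fX^\rig}(\cF, \cG),
\]
under which the given morphism $f$ corresponds to a class in $\pi_0$ of the right-hand side.

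Next I will use that $\kc \to k$ is obtained by inverting the generators $t_1, \dots, t_n$ of $\mathfrak m$: since $\kc$ is a rank-one valuation ring and $\mathfrak m$ is finitely generated, $\mathfrak m$ is in fact principal, and $k$ is obtained from $\kc$ by inverting any one of the $t_i$. Because $\kc \to k$ is flat, tensoring commutes with $\pi_0$, so the displayed equivalence specialises to an isomorphism of $\kc$-modules
\[
\bigl(\pi_0 \Hom_\fX(\widetilde{\cF}, \widetilde{\cG})\bigr)\bigl[t_1^{-1}, \dots, t_n^{-1}\bigr] \;\cong\; \pi_0 \Hom_{\fX^\rig}(\cF, \cG).
\]
Hence the class of $f$ can be represented as $g/(t_1^{m_1}\cdots t_n^{m_n})$ for suitable exponents $m_i \ge 0$ and some $g \in \pi_0 \Hom_\fX(\widetilde{\cF}, \widetilde{\cG})$. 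Taking $\ff$ to be a morphism representing $g$ and setting $\cF' \coloneqq \widetilde{\cF}$, $\cG' \coloneqq \widetilde{\cG}$, will produce the required lift of $t_1^{m_1}\cdots t_n^{m_n}\cdot f$.

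There is no substantive obstacle: the whole content is already packaged in \cref{cor:base_change_hom}, which is the delicate statement resting on \cref{cor:comparing_local_and_rigid_almost_perfect_complexes} and on the analysis of the construction $(-)^\loc$. In particular, \cref{thm:formal_models_filtered} need not be invoked here; the formal models supplied in the hypothesis may be kept unchanged, so that one can take $\cF' = \widetilde{\cF}$ and $\cG' = \widetilde{\cG}$ without having to replace them by cofinal choices along the filtered category $\FormalModels(\cF)$ or $\FormalModels(\cG)$.
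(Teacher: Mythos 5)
Your proof is correct, and it takes a genuinely (if mildly) different route from the paper's. The paper argues inside $\Ind(\Coh^+(\fX))$: the map $f$ induces $\widetilde{\cF} \to j(\cG)$, one identifies $j(\cG) \simeq \widetilde{\cG}^\loc = \colim_{\mathbb N^n} K_{\widetilde{\cG}}$ via \cref{lem:computation_j}, and compactness of $\widetilde{\cF}$ factors this map through a finite stage $K_{\widetilde{\cG}}(\mathbf m) = \widetilde{\cG}$ of the colimit, which unravels to a lift of $t^{\mathbf m} f$. You instead take $\pi_0$ of the equivalence of \cref{cor:base_change_hom} and use flatness of $\kc \to k$ to identify $\pi_0 \Hom_{\fX^\rig}(\cF, \cG)$ with the localization $\bigl(\pi_0 \Hom_\fX(\widetilde{\cF}, \widetilde{\cG})\bigr)[t_1^{-1}, \dots, t_n^{-1}]$, so that $f = g / t^{\mathbf m}$ for some $g$ lifting $t^{\mathbf m} f$. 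The two arguments express the same localization phenomenon — compactness of $\widetilde{\cF}$ against the colimit description of $\widetilde{\cG}^\loc$ is precisely how the paper computes $\Hom_\fX(\widetilde{\cF}, \widetilde{\cG}) \otimes_{\kc} k$ in \cref{lem:loc_nilpotent} — but your packaging buys two things: the proof never mentions $j$ or $(-)^\loc$ (they are black-boxed inside \cref{cor:base_change_hom}), and it makes explicit that the given formal models need not be replaced, i.e., one may take $\cF' = \widetilde{\cF}$ and $\cG' = \widetilde{\cG}$, which the paper's proof also does but leaves to the reader to extract from its overloaded notation. Two side remarks: your parenthetical that $\mathfrak m$ is principal is true (finitely generated ideals in a valuation ring are principal) but unnecessary, since localizing at the multiplicative set generated by $t_1, \dots, t_n$ works directly and matches the functor $K$; and, exactly as in the paper's own proof, your appeal to \cref{cor:base_change_hom} tacitly requires $\fX$ to be quasi-compact, quasi-separated and topologically almost of finite presentation — hypotheses omitted from the statement of the corollary but in force throughout this section.
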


\begin{proof}
	Any map $\cF \to \cG$ induces a map $\fF \to j(\cF) \to j(\cG)$.
	Using the equivalence $j(\cG) \simeq \cG^\loc$ and the fact that $\cF$ is compact in $\Ind(\Coh^+(\fX))$, we see that the map $\fF \to j(\cG)$ factors as $\fF \to \cG$.
	Unraveling the definition of the functor $K_\cG(-)$, we see that the conclusion follows.
\end{proof}

For later use, let us record the following consequence of \cref{lem:computation_j}:

\begin{cor} \label{cor:characterization_nilpotent}
	Let $\fX \in \dfSch_{\kc}^{\mathrm{taft}}$ be a quasi-compact and quasi-separated derived $\kc$-adic scheme topologically almost of finite presentation.
	Let $\cF \in \Coh^+(\fX)$.
	Then $\cF$ is $\mathfrak m$-nilpotent if and only if $\cF^\loc \simeq 0$.
\end{cor}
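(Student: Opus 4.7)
The plan is to observe that both directions follow more or less immediately from the material already established, in particular \cref{lem:loc_nilpotent}, \cref{lem:computation_j} and \cref{cor:fully_faithful_adjoint_to_rigg}. There is no substantive obstacle; the only thing to verify is that the two lemmas together characterize $\mathfrak m$-nilpotence.

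For the forward implication, if $\cF \in \Coh^+_{\mathrm{nil}}(\fX)$, then \cref{lem:loc_nilpotent} applies verbatim and gives $\cF^\loc \simeq 0$. So this direction requires no additional work.

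For the reverse implication, suppose $\cF^\loc \simeq 0$. By \cref{lem:computation_j}, the canonical map $\gamma_\cF \colon \cF^\loc \to j(\cF^\rig)$ is an equivalence, so $j(\cF^\rig) \simeq 0$. But \cref{cor:fully_faithful_adjoint_to_rigg} asserts that the right adjoint $j \colon \Ind(\Coh^+(\fX^\rig)) \to \Ind(\Coh^+(\fX))$ is fully faithful, hence conservative; therefore $\cF^\rig \simeq 0$ in $\Coh^+(\fX^\rig)$. By the very definition of $\Coh^+_{\mathrm{nil}}(\fX)$ as the fiber of the generic fiber functor $\rigg \colon \Coh^+(\fX) \to \Coh^+(\fX^\rig)$, this says precisely that $\cF$ is $\mathfrak m$-nilpotent.

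Combining the two directions completes the proof. The entire argument is a formal consequence of the description $\cF^\loc \simeq j(\cF^\rig)$ (\cref{lem:computation_j}) together with the fact that $j$ is fully faithful, so I would present it as two short paragraphs, one for each direction, with explicit citations to \cref{lem:loc_nilpotent}, \cref{lem:computation_j} and \cref{cor:fully_faithful_adjoint_to_rigg}.
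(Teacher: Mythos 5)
Your proposal is correct and matches the paper's own proof essentially verbatim: both directions are obtained from \cref{lem:loc_nilpotent} for the forward implication, and from the equivalence $\cF^\loc \simeq j(\cF^\rig)$ of \cref{lem:computation_j} combined with the conservativity of the fully faithful right adjoint $j$ from \cref{cor:fully_faithful_adjoint_to_rigg} for the converse. Nothing to add.
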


\begin{proof}
	If $\cF$ is $\mathfrak m$-nilpotent, the conclusion follows from \cref{lem:loc_nilpotent}.
	Suppose vice-versa that $\cF^\loc \simeq 0$.
	Then \cref{lem:computation_j} implies that
	\[ j( \cF^\rig ) \simeq \cF^\loc \simeq 0 . \]
	Now, \cref{cor:fully_faithful_adjoint_to_rigg} shows that $j$ is fully faithful.
	In particular it is conservative and therefore $\cF^\rig \simeq 0$.
	In other words, $\cF$ belongs to $\Coh^+_{\mathrm{nil}}(\fX)$.
\end{proof}

\section{Flat models for morphisms of derived analytic spaces}

Using the study of formal models for almost perfect complexes carried out in the previous section, we can prove the following derived version of \cite[Theorem 5.2]{Bosch_Formal_II}:

\begin{thm} \label{thm:formal_model_flat_map}
	Let $f \colon X \to Y$ be a proper map of quasi-paracompact derived \kanal spaces.
	Assume that:
	\begin{enumerate}
		\item the truncations of $X$ and $Y$ are \kanal spaces.\footnote{As opposed to \kanal \DM stacks.}
		\item The map $f$ is flat.
	\end{enumerate}
	Then there exists a proper flat formal model $\ff \colon \fX \to \fY$ in $\dfSch_{\kc}^{\mathrm{taft}}$ for $f$.
\end{thm}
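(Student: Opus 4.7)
The plan is to proceed by induction along the Postnikov towers of $\cO_X^{\mathrm{alg}}$ and $\cO_Y^{\mathrm{alg}}$, building a compatible system of proper flat formal models one stage at a time. This follows the algorithmic skeleton of \cite{Antonio_Formal_models}, where the key new difficulty is that the flatness constraint on $\ff$ forbids the free choice of formal models for $\pi_{n+1}(\cO_X^{\mathrm{alg}})$ that was available in the non-flat case; this is exactly what \cref{lift_p_maps} together with \cref{thm:formal_models_filtered} are designed to repair. Working locally on $Y$ (using quasi-paracompactness) one reduces to the case where all the formal schemes at play are quasi-compact and quasi-separated, so that the results of the previous section apply.

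For the base case, $\mathrm{t}_0(f) \colon \mathrm{t}_0(X) \to \mathrm{t}_0(Y)$ is a proper flat morphism of $k$-analytic spaces, so the classical theorem of Bosch and L\"utkebohmert supplies a proper flat formal model $\ff_0 \colon \fX_0 \to \fY_0$ in the underived category, which we view as an object of $\dfSch_{\kc}^{\mathrm{taft}}$. For the inductive step, assuming a proper flat formal model $\ff_n \colon \fX_n \to \fY_n$ of $\mathrm{t}_{\le n}(f)$ is given, the square-zero extension $\mathrm{t}_{\le n+1}(Y) \to \mathrm{t}_{\le n}(Y)$ is classified by an analytic derivation with target $\pi_{n+1}(\cO_Y^{\mathrm{alg}})[n+2]$; \cref{thm:adic_and_analytic_cotangent_complex} together with \cref{thm:formal_models_filtered} lifts it to an adic derivation $\adL_{\fY_n} \to \cF_Y[n+2]$ for some $\cF_Y \in \Coh^+(\fY_n)$ whose generic fiber is $\pi_{n+1}(\cO_Y^{\mathrm{alg}})$. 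Flatness of $f$ gives a canonical identification $\pi_{n+1}(\cO_X^{\mathrm{alg}}) \simeq \mathrm{t}_{\le n}(f)^{*} \pi_{n+1}(\cO_Y^{\mathrm{alg}})$ on generic fibers, and since $\ff_n$ is flat, $\ff_n^{*}(\cF_Y)$ is then a formal model of $\pi_{n+1}(\cO_X^{\mathrm{alg}})$, serving as the canonical target for the $X$-side derivation.

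The main obstacle is that the derivation classifying $\mathrm{t}_{\le n+1}(X) \to \mathrm{t}_{\le n}(X)$, inductively produced from $X$ alone, only agrees with the $\ff_n^{*}$ of the $Y$-side derivation after passage to generic fibers. Here \cref{lift_p_maps} applies to the comparison morphism of almost perfect complexes on $\fY_n$: after multiplying by a suitable monomial $t_1^{m_1} \cdots t_n^{m_n}$ in the uniformizers — which at the level of derivations amounts to replacing $\cF_Y$ by its twist $t_1^{m_1} \cdots t_n^{m_n} \cF_Y$, still a formal model — the two derivations become literally compatible at the formal level. One then defines $\fY_{n+1}$ by the twisted derivation and sets $\fX_{n+1} \coloneqq \fX_n \times_{\fY_n} \fY_{n+1}$ (computed in derived formal schemes via the corresponding square-zero extension), which is automatically flat over $\fY_{n+1}$.

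Finally, the Postnikov system $\{\ff_n\}$ assembles to a morphism $\ff \colon \fX \to \fY$ in $\dfSch_{\kc}^{\mathrm{taft}}$ whose generic fiber is identified with $f$ by an inductive application of \cref{prop21} (noting that $\rigg$ commutes with the relevant truncations by \cite[Corollary 4.1.3]{Antonio_Formal_models}); flatness is preserved in the limit since it is controlled at each finite stage, and properness descends from $\ff_0$ because it is tested on the underived truncation. The delicate point, which I expect to be the main technical difficulty, is the bookkeeping of the uniformizer twists at each inductive stage so that the twist introduced at level $n+1$ remains compatible with those chosen at lower levels — this is exactly the role played by the filteredness of $\FormalModels$ established in \cref{thm:formal_models_filtered}, which allows one to find a common refinement of the finitely many twists appearing up to any given stage.
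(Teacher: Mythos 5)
Your proposal follows the paper's strategy faithfully up to the decisive point (Postnikov induction; Bosch--L\"utkebohmert for the base case; lifting the classifying derivations to formal models via \cref{thm:adic_and_analytic_cotangent_complex} and \cref{thm:formal_models_filtered}; using flatness to take $\fF \coloneqq \ff_n^*\fG$; invoking \cref{lift_p_maps} to make the two derivations formally compatible after a twist by $t^{\mathbf m} = t_1^{m_1}\cdots t_n^{m_n}$), but there is a genuine gap exactly where the proof has to close: you never verify that $\ff_{n+1}$ is a formal model of $\mathrm t_{\le n+1}(f)$. A priori, the generic fiber of your $\fY_{n+1}$ is the square-zero extension of $\mathrm t_{\le n}Y$ classified by the \emph{twisted} derivation $d_{t^{\mathbf m}\beta}$, not by $d_\beta$, and similarly on the $X$-side; so your induction hypothesis ($\fX_{n+1}^\rig \simeq \mathrm t_{\le n+1}X$, etc.) is not established. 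The paper's resolution is the observation that multiplication by $t^{-\mathbf m}$ is an equivalence of $\cF$ and $\cG$ (the $t_i$ are invertible in $k$), inducing self-equivalences $a$ and $b$ of the split square-zero extensions that intertwine $d_{t^{\mathbf m}\alpha}$ with $d_\alpha$ and $d_{t^{\mathbf m}\beta}$ with $d_\beta$ (the diagrams \eqref{eq:flat_formal_model_III}); hence the twist is invisible after rigidification. Your final paragraph instead proposes cross-stage ``bookkeeping of twists'' resolved by filteredness of $\FormalModels$ --- this is the wrong mechanism. Filteredness enters only \emph{within} a single inductive step (to produce one formal model $\overline{\beta}$ of $\beta$, and via \cref{lift_p_maps} a compatible lift of $t^{\mathbf m}\alpha$); there is no compatibility to arrange across stages, because once the stage-$(n+1)$ models are shown to rigidify to the correct truncations, stage $n+2$ starts from them afresh. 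Without the $t^{-\mathbf m}$-equivalence argument, the delicate point you flag remains unproved and your induction does not close.

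A secondary but real error: the definition $\fX_{n+1} \coloneqq \fX_n \times_{\fY_n} \fY_{n+1}$ does not typecheck. The canonical morphism goes $\fY_n \to \fY_{n+1}$ (it exhibits $\fY_n$ as the $n$-truncation of the square-zero extension), so there is no projection $\fY_{n+1} \to \fY_n$ along which to form this fiber product; what you are describing is a flat \emph{lift} of $\fX_n$ along $\fY_n \hookrightarrow \fY_{n+1}$, and such lifts are obstructed in general, so ``automatically flat'' conceals the actual content. The correct construction (which your parenthetical gestures at) is the paper's: use \cref{lift_p_maps} to produce $\talpha \colon \adL_{\fX_n} \to \fF$ lifting $t^{\mathbf m}\alpha$ \emph{together with} a homotopy making it compatible with $t^{\mathbf m}\ff_n^*\overline{\beta}$ --- note this comparison of almost perfect complexes lives on $\fX_n$, not on $\fY_n$ as you wrote --- then define $\fX_{n+1}$ as the square-zero extension along $d_{\talpha}$ and obtain $\ff_{n+1}$ from the resulting commutative square \eqref{eq:flat_formal_model_II}; flatness of $\ff_{n+1}$ then follows from the identification $\fF \simeq \ff_n^*\fG$, not from a base-change formality.
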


\begin{proof}
	We construct, by induction on $n$, the following data:
	\begin{enumerate}
		\item derived $\kc$-adic schemes $\fX_n$ and $\fY_n$ equipped with equivalences
		\[ \fX_n^\rig \simeq \mathrm t_{\le n}(X) , \qquad \fY_n^\rig \simeq \mathrm t_{\le n}(Y) . \]
		\item Morphisms $\fX_n \to \fX_{n-1}$ and $\fY_n \to \fY_{n-1}$ exhibiting $\fX_{n-1}$ and $\fY_{n-1}$ as $(n-1)$-truncations of $\fX_n$ and $\fY_n$, respectively.
		\item A proper flat morphism $\ff_n \colon \fX_n \to \fY_n$ and homotopies making the cube
		\[ \begin{tikzcd}
			{} & \fX_n^\rig \arrow{rr}{\ff_n^\rig} \arrow{dd} \arrow{dl} & & \fY_n^\rig \arrow{dd} \arrow{dl} \\
			\fX_{n-1}^\rig \arrow[crossing over]{rr}[near end]{\ff_{n-1}^\rig} \arrow{dd} & & \fY_{n-1}^\rig \\
			{} & \mathrm t_{\le n}(X) \arrow{rr} \arrow{dl} & & \mathrm t_{\le n}(Y) \arrow{dl} \\
			\mathrm t_{\le n - 1}(X) \arrow{rr} & & \mathrm t_{\le n- 1}(Y) \arrow[leftarrow, crossing over]{uu}
		\end{tikzcd} \]
		commutative.
	\end{enumerate}
	Having these data at our disposal, we set
	\[ \fX \coloneqq \colim_n \fX_n , \qquad \fY \coloneqq \colim_n \fY_n , \]
	and we let $\ff \colon \fX \to \fY$ be map induced by the morphisms $\ff_n$.
	The properties listed above imply that $\ff$ is proper and flat and that its generic fiber is equivalent to $f$.
	
	We are therefore left to construct the data listed above.
	When $n = 0$, we can apply the flattening technique of Raynaud-Gruson (see \cite[Theorem 5.2]{Bosch_Formal_II}) to produce a proper flat formal model $\ff_0 \colon \fX_0 \to \fY_0$ for $\trunc(f) \colon \trunc(X) \to \trunc(Y)$.
	Assume now that we constructed the above data up to $n$ and let us construct it for $n+1$.
	Set $\cF \coloneqq \pi_{n+1}(\cO_X)[n+2]$ and $\cG \coloneqq \pi_{n+1}(\cO_Y)[n+2]$.
	Using \cite[Corollary 5.44]{Porta_Yu_Representability}, we can find analytic derivations $d_\alpha \colon (\mathrm t_{\le n}X)[\cF] \to \mathrm t_{\le n} X$ and $d_\beta \colon (\mathrm t_{\le n} Y)[\cG] \to \mathrm t_{\le n} Y$ making the following cube
	\begin{equation} \label{eq:flat_formal_model}
		\begin{tikzcd}
			{} & (\mathrm t_{\le n} X)[\cF] \arrow{rr}{d_0} \arrow{dd} \arrow{dl}{d_\alpha} & & \mathrm t_{\le n} X \arrow{dl} \arrow{dd}{f_n} \\
			\mathrm t_{\le n} X \arrow[crossing over]{rr} \arrow{dd} & & \mathrm t_{\le n + 1} X \\
			{} & (\mathrm t_{\le n} Y)[\cG] \arrow{rr}[near start]{d_0} \arrow{dl}{d_\beta} & & \mathrm t_{\le n} Y \arrow{dl} \\
			\mathrm t_{\le n} Y \arrow{rr} & & \mathrm t_{\le n + 1} Y \arrow[leftarrow, crossing over]{uu}[swap,near end]{f_{n+1}}
		\end{tikzcd}
	\end{equation}
	commutative.
	Here $d_0$ denote the zero derivation and we set $f_n \coloneqq \mathrm t_{\le n}(f)$, $f_{n+1} \coloneqq \mathrm t_{\le n+1}(f)$.
	The derivations $d_\alpha$ and $d_\beta$ correspond to morphisms $\alpha \colon \anL_{\mathrm t_{\le n} X} \to \cF$ and $\beta \colon \anL_{\mathrm t_{\le n} Y} \to \cG$, respectively.
	Moreover, the commutativity of the left side square in \eqref{eq:flat_formal_model} is equivalent to the commutativity of
	\[ \begin{tikzcd}
		f_n^* \anL_{\mathrm t_{\le n} Y} \arrow{r}{f_n^* \beta} \arrow{d} & f_n^* \cG \arrow{d} \\
		\anL_{\mathrm t_{\le n} X} \arrow{r}{\alpha} & \cF
	\end{tikzcd} \]
	in $\Coh^+(\mathrm t_{\le n} X)$.
	Notice that, since $f$ is flat, the morphism $f_n^* \cF \to \cG$ is an equivalence.
	Using \cref{thm:adic_and_analytic_cotangent_complex} and the induction hypothesis, we know that $\adL_{\mathfrak Y_n}$ is a canonical formal model for $\anL_{\mathrm t_{\le n} X}$.
	Using \cref{thm:formal_models_filtered}, we can therefore find a formal model $\overline{\beta} \colon \adL_{\mathfrak Y_n} \to \fG$ for the map $\beta$.
	We now set
	\[ \fF \coloneqq \ff_n^* \fG . \]
	Using \cref{lift_p_maps}, we can find $\mathbf m \in \mathbb N^n$ and a formal model $\talpha \colon \adL_{\mathfrak X_n} \to \fF$ for $t^{\mathbf m} \alpha$ together with a homotopy making the diagram
	\[ \begin{tikzcd}
		\ff_n^* \adL_{\mathfrak Y_n} \arrow{r}{t^{\mathbf m} \ff_n^* \overline{\beta}} \arrow{d} & \ff_n^* \fG \arrow[equal]{d} \\
		\adL_{\mathfrak X_n} \arrow{r}{\tilde{\alpha}} & \fF
	\end{tikzcd} \]
	commutative.
	Set $\tbeta \coloneqq t^{\mathbf m} \overline{\beta} \colon \adL_{\mathfrak Y_n} \to \fG$.
	Then $\talpha$ and $\tbeta$ induce a commutative square
	\begin{equation} \label{eq:flat_formal_model_II}
		\begin{tikzcd}
			\mathfrak X_n[\fF] \arrow{r}{d_{\tilde{\alpha}}} \arrow{d} & \mathfrak X_n \arrow{d}{\ff_n} \\
			\mathfrak Y_n[\fG] \arrow{r}{d_{\tilde{\beta}}} & \mathfrak Y_n .
		\end{tikzcd}
	\end{equation}
	We now define $\mathfrak X_{n+1}$ and $\mathfrak Y_{n+1}$ as the square-zero extensions associated to $\talpha$ and $\tbeta$.
	In other words, they are defined by the following pushout diagrams:
	\[ \begin{tikzcd}
		\mathfrak X_n[\fF] \arrow{d}{d_{\tilde{\alpha}}} \arrow{r}{d_0} & \mathfrak X_n \arrow{d} \\
		\mathfrak X_n \arrow{r} & \mathfrak X_{n+1}
	\end{tikzcd} , \quad \begin{tikzcd}
		\mathfrak Y_n[\fG] \arrow{r}{d_0} \arrow{d}{d_{\tilde{\beta}}} & \mathfrak Y_n \arrow{d} \\
		\mathfrak Y_n \arrow{r} & \mathfrak Y_{n+1} .
	\end{tikzcd} \]
	The commutativity of \eqref{eq:flat_formal_model_II} provides a canonical map $\ff_{n+1} \colon \fX_{n+1} \to \mathfrak Y_{n+1}$, which is readily verified to be proper and flat.
	We are therefore left to verify that $\ff_{n+1}$ is a formal model for $f_{n+1}$.
	Unraveling the definitions, we see that it is enough to produce equivalences $a \colon (\mathrm t_{\le n} X)[\cF] \xrightarrow{\sim} (\mathrm t_{\le n} X)[\cF]$ and $b \colon (\mathrm t_{\le n} Y)[\cG] \xrightarrow{\sim} (\mathrm t_{\le n} Y)[\cG]$ making the following diagrams
	\begin{equation} \label{eq:flat_formal_model_III}
		\begin{tikzcd}
			(\mathrm t_{\le n} X)[\cF] \arrow{r}{d_{t^{\mathbf m} \alpha}} \arrow{d}{a} & \mathrm t_{\le n} X \arrow[equal]{d} \\
			(\mathrm t_{\le n} X)[\cF] \arrow{r}{d_{\alpha}} & \mathrm t_{\le n} X
		\end{tikzcd} , \quad \begin{tikzcd}
			(\mathrm t_{\le n} Y)[\cG] \arrow{r}{d_{t^{\mathbf m} \beta}} \arrow{d}{b} & \mathrm t_{\le n} Y \arrow[equal]{d} \\
			(\mathrm t_{\le n} Y)[\cG] \arrow{r}{d_{\beta}} & \mathrm t_{\le n} Y
		\end{tikzcd}
	\end{equation}
	commutative.
	The situation is symmetric, so it is enough to deal with $\mathrm t_{\le n} X$.
	Consider the morphism
	\[ t^{- \mathbf m} \colon \cF \longrightarrow \cF , \]
	which exists because all the elements $t_i \in \mathfrak m$ are invertible in $k$.
	For the same reason it is an equivalence, with inverse given by multiplication by $t^{\mathbf m}$.
	This morphism induces a map
	\[ a \colon (\mathrm t_{\le n} X)[\cF] \longrightarrow ( \mathrm t_{\le n} X)[\cF] , \]
	which by functoriality is an equivalence.
	We now observe that the commutativity of \eqref{eq:flat_formal_model_III} is equivalent to the commutativity of
	\[ \begin{tikzcd}
		\anL_{\mathrm t_{\le n} X} \arrow{r}{t^{\mathbf m} \alpha} \arrow[equal]{d} & \cF \arrow{d}{t^{- \mathbf m}} \\
		\anL_{\mathrm t_{\le n} X} \arrow{r}{\alpha} & \cF ,
	\end{tikzcd} \]
	which is immediate.
	The proof is therefore achieved.
\end{proof}

\section{The plus pushforward for almost perfect sheaves}

Let $f \colon X \to Y$ be a proper map between derived \kanal spaces of finite tor-amplitude.
In \cite[Definition 7.9]{Porta_Yu_Mapping} it is introduced a functor
\[ f_+ \colon \Perf(X) \longrightarrow \Perf(Y) , \]
and it is shown in Proposition 7.11 in loc.\ cit.\ that for every $\cG \in \Coh^+(Y)$ there is a natural equivalence
\[ \Map_{\Coh^+(X)}( \cF, f^* \cG ) \simeq \Map_{\Coh^+(Y)}( f_+(\cF), \cG ) . \]
In this section we extend the definition of $f_+$ to the entire $\Coh^+(X)$, at least under the stronger assumption of $f$ being flat.

\begin{rem}
	In algebraic geometry, the extension of $f_+$ to $\Coh^+(X)$ passes through the extension to $\mathrm{QCoh}(X) \simeq \Ind(\Perf(X))$.
	This is ultimately requires being able to describe every element in $\Coh^+(X)$ as a filtered colimit of elements in $\Perf(X)$, which in analytic geometry is possible only locally.
\end{rem}

Therefore, this technique cannot be applied in analytic geometry.
When dealing with non-archimedean analytic geometry, formal models can be used to circumvent this problem.

\begin{prop} \label{prop:formal_plus_pushforward}
	Let $f \colon \fX \to \fY$ be a proper map between derived $\kc$-adic schemes.
	Assume that $f$ has finite tor amplitude.
	Then the functor
	\[ f^* \colon \Coh^+(\fY) \to \Coh^+(\fX) \]
	admits a left adjoint
	\[ f_+ \colon \Coh^+(\fX) \to \Coh^+(\fY) . \]
\end{prop}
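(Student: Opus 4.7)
The plan is to produce $f_+$ by passing to ind-completions. Write
\[ \Ind(f^*) \colon \Ind(\Coh^+(\fY)) \longrightarrow \Ind(\Coh^+(\fX)) \]
for the colimit-preserving extension of $f^*$ between these compactly generated presentable stable $\infty$-categories. By the adjoint functor theorem, $\Ind(f^*)$ admits a left adjoint $\Ind(f_+)$ if and only if it preserves small limits. It remains then to check that $\Ind(f_+)$ sends compact objects into compact objects, i.e.\ restricts to a functor $\Coh^+(\fX) \to \Coh^+(\fY)$; this restriction yields the desired $f_+$.

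The first substantive point is the limit preservation of $\Ind(f^*)$, and this is exactly where finite tor amplitude enters. After restricting along a formal affine Zariski cover of $\fY$, the functor $f^*$ is computed by tensoring with $\cO_\fX$ regarded as an $f^{-1}(\cO_\fY)$-module complex of bounded tor amplitude; tensoring with a complex of bounded amplitude commutes with small limits in $\Coh^+$ up to a uniform shift of the cohomological range, and this property persists after ind-completion. Compatibility of these local computations follows from Zariski descent for $\Coh^+$ in the spirit of \cref{thm:descent_punctured_category}.

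The second point is the preservation of compactness by $\Ind(f_+)$. This is the analogue of the coherence theorem for proper pushforwards: along a proper morphism, the direct image of an almost perfect complex remains almost perfect. In the underived formal setting this is a classical theorem of Grothendieck--Kiehl, and it extends to the derived formal case by induction on the Postnikov tower of $\fX$, controlling successive square-zero extensions via the fiber sequences for $\adL$ provided by \cref{thm:adic_and_analytic_cotangent_complex}. Finite tor amplitude of $f$ is used again here, to keep the homotopy groups of $\Ind(f_+)(\cF)$ built out of finitely many bounded coherent pieces.

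The main obstacle is arranging these two conditions---existence of the left adjoint on ind-completions and preservation of compactness---simultaneously, without additional hypotheses beyond properness and finite tor amplitude. Morally, what is required is a compact form of Grothendieck duality for proper morphisms of finite tor amplitude in the derived formal setting, controlled by the relative dualizing complex $\omega_f \in \Coh^b(\fX)$; making the construction of $\omega_f$ precise in this setting and checking its bounded amplitude is the technically most delicate part of the argument, and is what makes the combination ``proper $+$ finite tor amplitude'' essential to the statement.
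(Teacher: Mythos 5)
Your first step is where the whole weight of the argument rests, and the justification you give for it does not work. You need $\Ind(f^*)$ to preserve small limits, but the local computation you sketch is not meaningful as stated: $\Coh^+(\fY)$ itself does not admit small limits (an infinite product of coherent sheaves is no longer coherent), so ``tensoring commutes with small limits in $\Coh^+$ up to a uniform shift'' does not typecheck; the limits at issue live in $\Ind(\Coh^+(\fY))$, where they are emphatically not computed objectwise in modules. Even at the module level the claim is false: base change along a map of finite tor amplitude does not commute with infinite products unless the relevant module is perfect --- already $- \otimes_A A[x]$ (flat, hence tor amplitude $0$) fails to commute with countable products, and for a proper $f$ of positive relative dimension $\cO_\fX$ is nothing like a perfect $f^{-1}(\cO_\fY)$-module. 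Finally, you cannot reduce limit preservation to a formal affine cover of $\fY$: restriction to a Zariski open is itself a left adjoint and has no reason to preserve the limits of $\Ind(\Coh^+(\fY))$, and the descent statement of \cref{thm:descent_punctured_category} expresses $\Coh^+$ as a limit over a \v{C}ech diagram --- it does not let you commute $\Ind(f^*)$ past arbitrary small limits locally.

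Your second step is miscast, and your final paragraph concedes the actual work. Once a left adjoint $\Ind(f_+)$ of $\Ind(f^*)$ exists, preservation of compact objects is automatic: $\Ind(f^*)$ preserves filtered colimits by construction, so its left adjoint preserves compacts, and the compacts of $\Ind(\Coh^+)$ are exactly $\Coh^+$ by idempotent completeness --- no coherence theorem is needed. Conversely, the Grothendieck--Kiehl theorem you invoke concerns the \emph{right} adjoint $f_*$, and transporting it to $f_+$ requires precisely the duality $f_+ \simeq f_*( - \otimes \omega_f )$ whose construction in the derived formal setting you explicitly defer; so the proposal is a plan whose hardest ingredient is missing. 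The paper's proof sidesteps all of this by the adic filtration: setting $\fX_n \coloneqq \fX \times_{\Spf(\kc)} \Spec(\kc / \mathfrak m^n)$, completeness gives $\Coh^+(\fX) \simeq \lim_n \Coh^+(\fX_n)$, each $f_n \colon \fX_n \to \fY_n$ is a proper map of derived \emph{algebraic} schemes of finite tor amplitude, for which \cite[Remark 6.4.5.2(b), Proposition 6.4.5.4]{Lurie_SAG} already supplies the left adjoints $f_{n+}$ together with the base-change compatibilities needed to assemble them into a functor on the limit, and the adjunction identity is then a direct computation of mapping spaces as limits over $n$. Salvaging your route would in effect mean redoing Lurie's duality argument in the formal setting --- exactly what the reduction to the $\fX_n$ renders unnecessary.
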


\begin{proof}
	Let $\fX_n \coloneqq \fX \times_{\Spf(\kc)} \Spec(\kc / \mathfrak m^n)$ and define similarly $\fY_n$.
	Let $f_n \colon \fX_n \to \fY_n$ be the induced morphism.
	Then by definition of $\kc$-adic schemes, \personal{we are using the completeness in particular} we have
	\[ \fX \simeq \colim_{n \in \mathbb N} \fX_n , \qquad \fY \simeq \colim_{n \in \mathbb N} \fY_n , \]
	and therefore
	\[ \Coh^+(\fX) \simeq \lim_{n \in \mathbb N} \Coh^+(\fX_n) , \qquad \Coh^+(\fY) \simeq \lim_{n \in \mathbb N} \Coh^+(\fY_n) . \]
	Combining \cite[Remark 6.4.5.2(b) \& Proposition 6.4.5.4(1)]{Lurie_SAG}, we see that each functor
	\[ f_n^* \colon \Coh^+(\fY_n) \longrightarrow \Coh^+(\fX_n) \]
	admits a left adjoint $f_{n+}$.
	Moreover, Proposition 6.4.5.4(2) in loc.\ cit.\ implies that these functors $f_{n+}$ can be assembled into a natural transformation, and that therefore they induce a well defined functor
	\[ f_+ \colon \Coh^+(\fX) \longrightarrow \Coh^+(\fY) . \]
	Now let $\cF \in \Coh^+(\fX)$ and $\cG \in \Coh^+(\fY)$.
	Let $\cF_n$ and $\cG_n$ be the pullbacks of $\cF$ and $\cG$ to $\cX_n$ and $\cY_n$, respectively.
	Then
	\begin{align*}
		\Map_{\Coh^+(\fX)}( \cF, f^*(\cG) ) & \simeq \lim_{n \in \mathbb N} \Map_{\Coh^+(\fX_n)}( \cF_n, f_n^*( \cG_n) ) \\
		& \simeq \lim_{n \in \mathbb N} \Map_{\Coh^+(\fY_n)}( f_{n+}(\cF_n), \cG_n ) \\
		& \simeq \Map_{\Coh^+(\fY)}( f_+(\cF), \cG ) ,
	\end{align*}
	which completes the proof.
\end{proof}

\begin{cor} \label{cor:plus_pushforward}
	Let $f \colon X \to Y$ be a proper map between derived analytic spaces.
	Assume that $f$ is flat.
	Then the functor
	\[ f^* \colon \Coh^+(Y) \to \Coh^+(X) \]
	admits a left adjoint
	\[ f_+ \colon \Coh^+(X) \to \Coh^+(Y) . \]
\end{cor}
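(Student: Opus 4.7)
The plan is to reduce the problem to the formal world via Theorem \ref{thm:formal_model_flat_map}, apply the formal plus pushforward construction of Proposition \ref{prop:formal_plus_pushforward} there, and then descend the resulting adjunction to the generic fibers using the identification $\Coh^+(X) \simeq \Coh^+_{\mathrm{loc}}(\fX)$ provided by Corollary \ref{cor:comparing_local_and_rigid_almost_perfect_complexes}.

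First, I would apply Theorem \ref{thm:formal_model_flat_map} to obtain a proper flat formal model $\ff \colon \fX \to \fY$ of $f$ in $\dfSch_{\kc}^{\mathrm{taft}}$; by construction $\fX$ and $\fY$ are quasi-compact and quasi-separated. Since $\ff$ is proper and flat, it has finite tor-amplitude, so Proposition \ref{prop:formal_plus_pushforward} yields an adjunction $\ff_+ \dashv \ff^*$ between $\Coh^+(\fX)$ and $\Coh^+(\fY)$.

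The main step is to show that $\ff_+$ preserves $\mathfrak{m}$-nilpotent almost perfect complexes. Let $\fF \in \Coh^+_{\mathrm{nil}}(\fX)$ and let $\fG \in \Coh^+(\fY)$ be arbitrary. Applying the base change formula for $\Hom$ (Corollary \ref{cor:base_change_hom}) on both sides of the adjunction isomorphism gives
\[ \Hom_Y((\ff_+\fF)^\rig, \fG^\rig) \simeq \Hom_\fY(\ff_+\fF, \fG) \otimes_{\kc} k \simeq \Hom_\fX(\fF, \ff^*\fG) \otimes_{\kc} k \simeq \Hom_X(\fF^\rig, (\ff^*\fG)^\rig) \simeq 0, \]
where the last equivalence uses $\fF^\rig \simeq 0$. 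Since $\rigg \colon \Coh^+(\fY) \to \Coh^+(Y)$ is essentially surjective (Corollary \ref{cor:existence_formal_models}), varying $\fG$ gives $\Hom_Y((\ff_+\fF)^\rig, -) \simeq 0$ on $\Coh^+(Y)$, hence $(\ff_+\fF)^\rig \simeq 0$, i.e., $\ff_+\fF \in \Coh^+_{\mathrm{nil}}(\fY)$.

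With this step in hand, both $\ff^*$ and $\ff_+$ preserve the $\mathfrak{m}$-nilpotent subcategories (for $\ff^*$ this is immediate from the naturality of $(-)^\rig$, cf.\ \eqref{eq:naturality_rigg}), so both functors descend to the Verdier quotients $\Coh^+_{\mathrm{loc}}(\fX)$ and $\Coh^+_{\mathrm{loc}}(\fY)$. By Corollary \ref{cor:comparing_local_and_rigid_almost_perfect_complexes} these quotients are identified with $\Coh^+(X)$ and $\Coh^+(Y)$, under which $\ff^*$ descends to $f^*$ and $\ff_+$ descends to a functor $f_+ \colon \Coh^+(X) \to \Coh^+(Y)$. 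The unit and counit of the adjunction $\ff_+ \dashv \ff^*$ descend to natural transformations exhibiting $f_+$ as left adjoint to $f^*$. The principal obstacle is precisely the nilpotency-preservation step, which rests on the base change identity for mapping spaces (Corollary \ref{cor:base_change_hom}) and the essential surjectivity of the generic fiber functor on almost perfect complexes — both of which are non-trivial consequences of the descent theorem for $\Coh^+_{\mathrm{loc}}$ established earlier in the paper.
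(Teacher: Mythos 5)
Your proposal is correct, and it follows the paper's overall scaffolding — choose a proper flat formal model via \cref{thm:formal_model_flat_map}, construct $\ff_+$ via \cref{prop:formal_plus_pushforward}, check that $\ff_+$ preserves $\Coh^+_{\mathrm{nil}}$, and descend through the Verdier quotient using \cref{cor:comparing_local_and_rigid_almost_perfect_complexes} — but you execute the two delicate sub-steps by genuinely different means. For nilpotence preservation, the paper extends $\ff_+$ to Ind-categories and argues $(\ff_+\fF)^\loc \simeq \ff_+(\fF^\loc) \simeq 0$, concluding via \cref{cor:characterization_nilpotent}; you instead run a Yoneda-style argument: applying \cref{cor:base_change_hom} on both sides of the formal adjunction gives $\Hom_Y((\ff_+\fF)^\rig, \fG^\rig) \simeq \Hom_X(\fF^\rig, (\ff^*\fG)^\rig) \simeq 0$, and then $(\ff_+\fF)^\rig \simeq 0$ follows (indeed you do not even need essential surjectivity here: taking $\fG = \ff_+\fF$ already makes the identity of $(\ff_+\fF)^\rig$ nullhomotopic). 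This bypasses the $(-)^\loc$ machinery (\cref{lem:loc_nilpotent}, \cref{cor:characterization_nilpotent}) and the Ind-extension of $\ff_+$ entirely, at the cost of two extra applications of \cref{cor:base_change_hom} — which the paper invokes anyway. For the adjunction itself, the paper verifies it by an explicit mapping-space computation (choosing formal models $\fF$ and $\fG$ and applying \cref{cor:base_change_hom} twice), whereas you descend the unit and counit formally through the quotients; this is legitimate since both $\ff_+$ and $\ff^*$ preserve the nilpotent subcategories and restriction along a Verdier quotient is fully faithful on exact functors, so the triangle identities descend. Your route makes the adjunction automatic once nilpotence preservation is established, while the paper's computation has the minor virtue of exhibiting the formula $f_+(\cF) \simeq (\ff_+\fF)^\rig$ explicitly, which is then used in \cref{prop:RHilb_cotangent_complex}. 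One shared caveat, inherited from the paper and hence not a gap specific to your argument: \cref{cor:base_change_hom} and \cref{cor:comparing_local_and_rigid_almost_perfect_complexes} require the formal models to be quasi-compact and quasi-separated, and \cref{thm:formal_model_flat_map} assumes quasi-paracompactness with truncations that are $k$-analytic spaces, hypotheses that the statement of the corollary and both proofs treat implicitly.
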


\begin{proof}
	Using \cref{thm:formal_model_flat_map}, we can choose a proper flat formal model $\ff \colon \fX \to \fY$ for $f$.
	Thanks to \cref{prop:formal_plus_pushforward}, we have a well defined functor
	\[ \ff_+ \colon \Coh^+(\fX) \longrightarrow \Coh^+(\fY) . \]
	We claim that it restricts to a functor
	\[ \ff_+ \colon \Coh^+_{\mathrm{nil}}(\fX) \longrightarrow \Coh^+_{\mathrm{nil}}(\fY) . \]
	Using \cref{cor:characterization_nilpotent}, it is enough to prove that
	\[ \ff_+( \cF )^\loc \simeq 0 . \]
	Extending $\ff_+$ to a functor $\ff_+ \colon \Ind( \Coh^+(\fX) ) \to \Ind( \Coh^+(\fY) )$, we see that
	\[ \ff_+( \cF )^\loc \simeq \ff_+(\cF^\loc) \simeq 0 . \]
	Using \cref{cor:comparing_local_and_rigid_almost_perfect_complexes}, we get a well defined functor
	\[ f_+ \colon \Coh^+(X) \longrightarrow \Coh^+(Y) . \]
	We only have to prove that it is left adjoint to $f^*$.
	Let $\cF \in \Coh^+(X)$ and $\cG \in \Coh^+(Y)$.
	Choose a formal model $\fF \in \Coh^+(\fX)$.
	Then unraveling the construction of $f_+$, we find a canonical equivalence \personal{Induced from the given equivalence $\fF^\rig \simeq \cF$}
	\[ f_+(\cF) \simeq \ff_+( \fF )^\rig . \]
	We now have the following sequence of natural equivalences:
	\begin{align*}
		\Map_{\Coh^+(Y)}( f_+( \cF ), \cG ) & \simeq \Map_{\Coh^+(Y)}( (\ff_+( \fF ))^\rig, \fG^\rig ) \\
		& \simeq \Map_{\Coh^+(\fX)}( \ff_+( \fF ), \fG ) \otimes_{\kc} k & \text{by \cref{cor:base_change_hom}} \\
		& \simeq \Map_{\Coh^+(\fX)}( \fF, \ff^* \fG ) \otimes_{\kc} k \\
		& \simeq \Map_{\Coh^+(\fX)}( \fF^\rig, ( \ff^* \fG )^\rig ) & \text{by \cref{cor:base_change_hom}} \\
		& \simeq \Map_{\Coh^+(\fX)}( \cF, f^* \cG ) .
	\end{align*}
	The proof is therefore complete.
\end{proof}

\begin{cor} \label{cor:plus_pushforward_base_change}
	Let $f \colon X \to Y$ be a proper and flat map between derived analytic spaces.
	Let $p \colon Z \to Y$ be any other map and consider the pullback square
	\[ \begin{tikzcd}
		W \arrow{r}{q} \arrow{d}{g} & X \arrow{d}{f} \\
		Z \arrow{r}{p} & Y .
	\end{tikzcd} \]
	Then for any $\cF \in \Coh^+(X)$ the canonical map
	\[ g_+(q^*(\cF)) \longrightarrow p^*(f_+(\cF)) \]
	is an equivalence.
\end{cor}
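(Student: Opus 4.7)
The plan is to lift the entire square, together with $\cF$, to formal models over $\kc$, verify base change at the formal level at each finite $\fm$-adic thickening, and then descend by taking generic fibers. First, I invoke \cref{thm:formal_model_flat_map} to produce a proper flat formal model $\ff \colon \fX \to \fY$ of $f$. Next, using the existence of formal models for morphisms between derived $k$-analytic spaces, I choose, after possibly replacing $\fY$ by an admissible blowup and pulling $\ff$ back accordingly, a formal model $\fp \colon \fZ \to \fY$ of $p$ sitting over the same base. Form the fiber product $\fW \coloneqq \fX \times_\fY \fZ$ in $\dfSch_{\kc}^{\taft}$, with projections $\fq \colon \fW \to \fX$ and $\fg \colon \fW \to \fZ$. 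Properness and flatness are stable under base change, so $\fg$ is proper and flat. Moreover, the generic fiber functor preserves this fiber product (it is a right adjoint on structured $\infty$-topoi, cf.\ \cref{prop21}), so $\fW^\rig \simeq W$ with $\fg^\rig \simeq g$ and $\fq^\rig \simeq q$. Finally, using \cref{cor:existence_formal_models}, pick a formal model $\fF \in \Coh^+(\fX)$ of $\cF$.

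Second, I would establish the base change on formal models, namely the equivalence
\[ \fg_+ \fq^*\fF \stackrel{\sim}{\longrightarrow} \fp^* \ff_+ \fF \]
in $\Coh^+(\fZ)$. By the construction of the plus pushforward in \cref{prop:formal_plus_pushforward}, which identifies $\Coh^+(\fX) \simeq \lim_n \Coh^+(\fX_n)$ and similarly for $\fY$, $\fZ$, $\fW$, it suffices to verify the analogous equivalence at each finite thickening $\fX_n \to \fY_n$. There, $\ff_n$ is a proper flat morphism of derived $\kc$-schemes, and the base change equivalence for the left adjoint $\ff_{n+} \dashv \ff_n^*$ along the arbitrary map $\fp_n \colon \fZ_n \to \fY_n$ is standard in derived algebraic geometry (cf.\ \cite[Proposition 6.4.5.4]{Lurie_SAG} and the adjacent base change statements). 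Assembling these equivalences over $n$ produces the formal-level equivalence.

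To conclude, I would apply $\rigg$. By the construction of $f_+$ given in the proof of \cref{cor:plus_pushforward}, we have $(\ff_+ \fF)^\rig \simeq f_+ \cF$, and analogously $(\fg_+ \fq^* \fF)^\rig \simeq g_+ q^* \cF$ (using that $\fq^* \fF$ is a formal model of $q^* \cF$, which follows from the compatibility of pullbacks with generic fibers). Since $\rigg$ commutes with the formal pullback defining $\fW$, we also get $(\fp^* \ff_+ \fF)^\rig \simeq p^* f_+ \cF$. The formal equivalence of the previous step then descends to the desired equivalence $g_+ q^* \cF \simeq p^* f_+ \cF$.

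The main obstacle is the initial set-up: arranging the two formal models $\fX$ and $\fZ$ to live over a single common formal model of $Y$ so that $\fW \coloneqq \fX \times_\fY \fZ$ makes sense and has generic fiber $W$; this requires Antonio's existence theorem for formal models of morphisms together with a compatible choice of admissible blowup of $\fY$ that preserves flatness and properness of $\ff$. Once this is in place, the rest of the argument is routine: a purely algebraic base change at each finite $\fm$-adic level, combined with the compatibility of the plus pushforward construction with the generic fiber functor established in the proof of \cref{cor:plus_pushforward}.
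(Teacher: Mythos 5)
Your proposal is correct and follows essentially the same route as the paper: produce a proper flat formal model of $f$ via \cref{thm:formal_model_flat_map}, choose a compatible formal model of $p$ over the same $\fY$, pick a formal model of $\cF$, reduce to the formal-level base change, and invoke \cite[Proposition 6.4.5.4(2)]{Lurie_SAG}. You in fact spell out two points the paper leaves implicit --- the zig-zag of admissible blowups needed to put $\fX$ and $\fZ$ over a common $\fY$ (flatness and properness survive since they are stable under base change), and the levelwise verification over the $\fm$-adic thickenings --- so the write-up is, if anything, more complete than the published argument.
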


\begin{proof}
	Using \cref{thm:formal_model_flat_map}, we find a flat formal model $\ff \colon \fX \to \fY$.
	Choose a formal model $\mathfrak p \colon \fZ \to \fY$ for $p \colon Z \to Y$,
	\personal{It is always possible to find a formal model $\fZ \to \fY'$ for $p$.
		We can also find a zig-zag $\fY' \leftarrow \fY'' \to \fY$.
	Then $\fY'' \times_{\fY} \fX \to \fY''$ is a flat formal model for $f$ and $\fY'' \times_{\fY'} \fZ \to \fY''$ is a formal model for $p$.}
	and form the pullback square
	\[ \begin{tikzcd}
		\mathfrak W \arrow{r}{\mathfrak q} \arrow{d}{\mathfrak g} & \fX \arrow{d}{\ff} \\
		\mathfrak Z \arrow{r}{\mathfrak p} & \fY .
	\end{tikzcd} \]
	Choose also a formal model $\fF \in \Coh^+(\fX)$ for $\cF$.
	It is then enough to prove that the canonical map
	\[ \mathfrak g_+( \mathfrak q^*( \fF ) ) \longrightarrow \mathfrak p^*( \ff_+(\fF) ) \]
	is an equivalence.
	This follows at once by \cite[Proposition 6.4.5.4(2)]{Lurie_SAG}.
\end{proof}

\section{Representability of $\mathbf R \mathrm{Hilb}(X)$}

Let $p \colon X \to S$ be a proper and flat morphism of underived \kanal spaces.
We define the functor
\[ \rR \Hilb(X/S) \colon \dAfd_S\op \longrightarrow \cS \]
by sending $T \to S$ to the space of diagrams
\begin{equation} \label{eq:functor_points_Hilbert}
	\begin{tikzcd}[column sep = small]
		Y \arrow[hook]{rr}{i} \arrow{dr}[swap]{q_T} & & T \times_S X \arrow{dl}{p_T} \\
		{} & T
	\end{tikzcd}
\end{equation}
where $i$ is a closed immersion of derived \kanal spaces, and $q_T$ is flat.

\begin{prop} \label{prop:RHilb_cotangent_complex}
	Keeping the above notation and assumptions, $\rR \Hilb(X/S)$ admits a global analytic cotangent complex.
\end{prop}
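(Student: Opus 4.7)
The plan is to compute the cotangent complex pointwise via the plus pushforward of the relative analytic cotangent complex of the closed immersion, and then to use \cref{cor:plus_pushforward_base_change} to verify the base change compatibility that promotes these pointwise complexes to a global analytic cotangent complex in the sense of \cite{Porta_Yu_Representability}.

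Fix a derived $k$-affinoid $T$ and a point $x\colon T\to\rR\Hilb(X/S)$ corresponding to a diagram as in \eqref{eq:functor_points_Hilbert} with $q_T\colon Y\to T$ flat. Since $p_T$ is the base change of the proper flat morphism $p$, it is itself proper and flat, and hence so is $q_T=p_T\circ i$. The transitivity cofiber sequence for $Y\to T\times_S X\to T$ shows that $\anL_{Y/T\times_S X}$ lies in $\Coh^+(Y)$, and by \cref{cor:plus_pushforward} the functor $q_T^*$ admits a left adjoint $q_{T+}\colon\Coh^+(Y)\to\Coh^+(T)$. The proposed pointwise cotangent complex at $x$ is
\[
x^*\anL_{\rR\Hilb(X/S)} \;:=\; q_{T+}\bigl(\anL_{Y/T\times_S X}\bigr)\;\in\;\Coh^+(T).
\]

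The crucial step is to identify the derivation space $\mathrm{Der}(x,\cF):=\fib_x\bigl(\rR\Hilb(X/S)(T[\cF])\to\rR\Hilb(X/S)(T)\bigr)$ with $\Map_{\Coh^+(T)}(x^*\anL_{\rR\Hilb(X/S)},\cF)$ for any $\cF\in\Coh^+(T)^{\le 0}$. Flatness of $p$ yields a canonical identification $T[\cF]\times_S X\simeq(T\times_S X)[p_T^*\cF]$, so extending $x$ along $T\hookrightarrow T[\cF]$ amounts to deforming the flat closed immersion $i$ into the split square-zero extension $(T\times_S X)[p_T^*\cF]$ of its target while preserving flatness over $T[\cF]$. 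The analytic square-zero extension machinery of \cite{Porta_Yu_Representability} identifies the space of such deformations with $\Map_{\Coh^+(Y)}\!\bigl(\anL_{Y/T\times_S X},\,q_T^*\cF\bigr)$, and the $(q_{T+},q_T^*)$-adjunction converts this into $\Map_{\Coh^+(T)}\bigl(q_{T+}\anL_{Y/T\times_S X},\,\cF\bigr)$, as desired.

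To conclude that these pointwise complexes define a global analytic cotangent complex, one checks base change: for any morphism $u\colon T'\to T$ of derived $k$-affinoids with induced $u'\colon Y':=Y\times_T T'\to Y$ and $q_{T'}\colon Y'\to T'$, we need
\[
u^*q_{T+}\bigl(\anL_{Y/T\times_S X}\bigr)\;\simeq\;q_{T'+}\bigl(\anL_{Y'/T'\times_S X}\bigr),
\]
which follows by combining \cref{cor:plus_pushforward_base_change} with the standard base change equivalence $u'^*\anL_{Y/T\times_S X}\simeq\anL_{Y'/T'\times_S X}$. The main obstacle is the deformation-theoretic identification in the previous paragraph: one must reduce the joint deformation problem (deforming the closed immersion while simultaneously preserving flatness over the base) to a mapping space out of the single relative cotangent complex $\anL_{Y/T\times_S X}$. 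Everything else — the existence of $q_{T+}$, the adjunction rewriting, and the base change check — is a formal consequence of \cref{cor:plus_pushforward} and \cref{cor:plus_pushforward_base_change}.
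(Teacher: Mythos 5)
Your proposal is correct and follows essentially the same route as the paper: the pointwise cotangent complex is $q_{T+}(\anL_{Y/T\times_S X})$, obtained by identifying the derivation space with $\Map_{\Coh^+(Y)}(\anL_{Y/T\times_S X}, q_T^*\cF)$ and applying the $(q_{T+},q_T^*)$-adjunction from \cref{cor:plus_pushforward}, with globality via the base change of \cref{cor:plus_pushforward_base_change}. The only cosmetic difference is that the paper invokes \cite[Corollary 5.40]{Porta_Yu_Representability} for the connectivity of $\anL_{Y/T\times_S X}$ where you use the transitivity sequence, and it treats the deformation-theoretic identification as an unraveling of definitions, exactly as you do.
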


\begin{proof}
	Let $x \colon T \to \rR\Hilb(X/S)$ be a morphism from a derived $k$-affinoid space $T \in \dAfd_S$.
	It classifies a diagram of the form \eqref{eq:functor_points_Hilbert}.
	Unraveling the definitions, we see that the functor
	\[ \DerAn_{\rR\Hilb(X/S),x}(T;-) \colon \Coh^+(T) \longrightarrow \rR\Hilb(X/S) \]
	can be explicitly written as
	\[ \DerAn_{\rR\Hilb(X/S),x}(T;\cF) \simeq \Map_{\Coh^+(Y)}( \anL_{Y/T \times_S X}, q_T^*( \cF ) ) . \]
	Since $q_T \colon Y \to T$ is proper and flat, \cref{cor:plus_pushforward} implies the existence of a left adjoint $q_{T+} \colon \Coh^+(Y) \to \Coh^+(T)$ for $q_T^*$.
	Moreover, \cite[Corollary 5.40]{Porta_Yu_Representability} implies that $\anL_{Y/ T \times_S X} \in \Coh{\ge 0}( Y )$.
	Therefore, we find
	\[ \DerAn_{\rR\Hilb(X/S),x}(T; \cF) \simeq \Map_{\Coh^+(T)}( q_{T+}( \anL_{Y / T \times_S X} ), \cF ) , \]
	and therefore $\rR \Hilb(X/S)$ admits an analytic cotangent complex at $x$.
	Using \cref{cor:plus_pushforward_base_change}, we see that it admits as well a global analytic cotangent complex.
\end{proof}

\begin{thm}
	Let $X$ be a \kanal space.
	Then $\mathbf{R} \mathrm{Hilb}(X)$ is a derived analytic space.
\end{thm}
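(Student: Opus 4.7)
The plan is to invoke the analytic Artin--Lurie representability theorem of \cite{Porta_Yu_Representability} applied to the functor $\mathbf R \mathrm{Hilb}(X) \colon \dAfd_k\op \to \cS$, defined exactly as $\rR\Hilb(X/S)$ in the previous section but with $S \coloneqq \Sp(k)$. That theorem asks one to verify five conditions: étale hyperdescent, infinitesimal cohesiveness, nilcompleteness, the existence of a global analytic cotangent complex, and representability of the underlying classical truncation.

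The truncation condition holds because $t_0 \mathbf R \mathrm{Hilb}(X)$ recovers the classical Hilbert functor $\Hilb(X)$, which is representable by a separated \kanal space thanks to \cite{Conrad_Spreading-out}. Hyperdescent, infinitesimal cohesiveness, and nilcompleteness are formal properties: they follow from the corresponding properties of $\Coh^+$, together with the standard observation that flatness and the property of being a closed immersion are preserved under pullback along trivial square-zero extensions. These verifications are parallel to the algebraic case (see for instance the arguments in \cite{Porta_Yu_Mapping}) and do not require any input specific to the \nanal setting.

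The nontrivial step is the existence of a global analytic cotangent complex. This is essentially the content of \cref{prop:RHilb_cotangent_complex}, which was formulated for the relative functor $\rR\Hilb(X/S)$ but whose proof only uses that the structural map $q_T \colon Y \to T$ parametrized by a point of the functor is proper and flat. Since in our absolute setting $q_T$ is still proper and flat by the very definition of $\mathbf R \mathrm{Hilb}(X)$, the same computation yields
\[ \DerAn_{\mathbf R \mathrm{Hilb}(X), x}(T; \cF) \simeq \Map_{\Coh^+(T)}\bigl( q_{T+}(\anL_{Y / T \times X}), \cF \bigr) , \]
where the plus pushforward of the connective almost perfect complex $\anL_{Y / T \times X}$ is formed using \cref{cor:plus_pushforward}. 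Compatibility with base change, provided by \cref{cor:plus_pushforward_base_change}, promotes this pointwise cotangent complex to a global one.

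The main obstacle has in fact already been overcome in the preceding sections: the construction of $q_{T+}$ on the whole of $\Coh^+$ (rather than only on $\Perf$ as in \cite{Porta_Yu_Mapping}) rests on the existence of proper flat formal models (\cref{thm:formal_model_flat_map}) and on the filteredness of the \infcat of formal models for an almost perfect complex (\cref{thm:formal_models_filtered}). Once these ingredients are in place, the present theorem is a direct application of the representability machinery of \cite{Porta_Yu_Representability}.
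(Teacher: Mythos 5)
Your proposal follows essentially the same route as the paper: both reduce the statement to the representability theorem of \cite{Porta_Yu_Representability}, cite \cite{Conrad_Spreading-out} for the truncation, invoke \cref{prop:RHilb_cotangent_complex} (built on \cref{cor:plus_pushforward} and \cref{cor:plus_pushforward_base_change}) for the global analytic cotangent complex, and dismiss the remaining conditions as formal checks. Your version is in fact slightly more careful than the paper's, since you explicitly address the passage from the relative functor $\rR\Hilb(X/S)$ to the absolute one and note that properness and flatness of $q_T$ are what the plus-pushforward construction requires.
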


\begin{proof}
	We only need to check the hypotheses of \cite[Theorem 7.1]{Porta_Yu_Representability}.
	The representability of the truncation is guaranteed by \cite[Proposition 5.3.3]{Conrad_Spreading-out}.
	The existence of the global analytic cotangent complex has been dealt with in \cref{prop:RHilb_cotangent_complex}.
	Convergence and infinitesimal cohesiveness are straightforward checks.
	The theorem follows.
\end{proof}

As a second concluding applications, let us mention that the theory of the plus pushforward developed in this paper allows to remove the lci assumption in \cite[Theorem 8.6]{Porta_Yu_Mapping}:

\begin{thm} \label{thm:rep_mapping}
	Let $S$ be a rigid \kanal space.
	Let $X,Y$ be rigid \kanal spaces over $S$.
	Assume that $X$ is proper and flat over $S$ and that $Y$ is separated over $S$.
	Then the $\infty$-functor $\bfMap_S(X,Y)$ is representable by a derived \kanal space separated over $S$.
\end{thm}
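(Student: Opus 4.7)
The plan is to reproduce the argument of \cite[Theorem 8.6]{Porta_Yu_Mapping} essentially verbatim, invoking the analytic Artin--Lurie representability criterion \cite[Theorem 7.1]{Porta_Yu_Representability}, and to observe that the sole place where the lci hypothesis was used there is now bypassed by \cref{cor:plus_pushforward}. More precisely, I would check the four standard conditions of the representability theorem as follows.

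First, the truncation $\trunc \bfMap_S(X,Y)$ is representable by a classical rigid \kanal space separated over $S$; this is the purely $0$-truncated part of \cite[Theorem 8.6]{Porta_Yu_Mapping}, whose proof does not require $Y$ to be lci over $S$. Second, convergence and infinitesimal cohesiveness of $\bfMap_S(X,Y)$ follow formally from the standard manipulations with $\cTank$-structured $\infty$-topoi, exactly as in loc.\ cit. Third, separatedness over $S$ is inherited from the separatedness of $Y \to S$ together with the properness of $X \to S$, by the usual valuative criterion applied to the truncation.

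The only substantive point is the existence of a global analytic cotangent complex. At a $T$-point $f \colon T \times_S X \to Y$ of $\bfMap_S(X,Y)$ with $T \in \dAfd_S$, writing $p_T \colon T \times_S X \to T$ for the projection, an unwinding of definitions identical to that in \cref{prop:RHilb_cotangent_complex} (and to \cite[\S 8]{Porta_Yu_Mapping}) shows
\[
    \DerAn_{\bfMap_S(X,Y), f}(T; \cF) \simeq \Map_{\Coh^+(T \times_S X)}\bigl( f^* \anL_{Y/S},\, p_T^*(\cF) \bigr).
\]
Since $Y$ is (locally topologically) of finite presentation over $S$, the relative analytic cotangent complex $\anL_{Y/S}$ lies in $\Coh^+(Y)$, hence $f^* \anL_{Y/S} \in \Coh^+(T \times_S X)$. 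Because $X$ is proper and flat over $S$, so is $p_T$ by base change, and therefore \cref{cor:plus_pushforward} produces a left adjoint $p_{T+} \colon \Coh^+(T \times_S X) \to \Coh^+(T)$ to $p_T^*$. Rewriting the derivation functor,
\[
    \DerAn_{\bfMap_S(X,Y), f}(T; \cF) \simeq \Map_{\Coh^+(T)}\bigl( p_{T+}(f^* \anL_{Y/S}),\, \cF \bigr),
\]
exhibits the analytic cotangent complex of $\bfMap_S(X,Y)$ at $f$. Compatibility with base change along maps $T' \to T$, needed for globality, is supplied by \cref{cor:plus_pushforward_base_change}.

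The main obstacle in the original Porta--Yu argument was exactly this last step: without the plus pushforward on $\Coh^+$, one needed $f^* \anL_{Y/S}$ to be \emph{perfect} so as to apply the perfect-level $p_+$ of \cite[Definition 7.9]{Porta_Yu_Mapping}, forcing $Y$ to be lci over $S$. The flat formal model theorem \cref{thm:formal_model_flat_map} combined with the $\Coh^+$-level plus pushforward of \cref{cor:plus_pushforward} removes precisely this restriction, and no other ingredient of the proof is affected.
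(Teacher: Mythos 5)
Your proposal is correct and follows exactly the route the paper takes: the paper's proof is a one-line reduction to \cite[Theorem 8.6]{Porta_Yu_Mapping}, observing that \cref{cor:plus_pushforward} and \cref{cor:plus_pushforward_base_change} let one establish the cotangent-complex step (Lemma 8.4 of loc.\ cit.) without the finite presentation/lci hypothesis on $Y \to S$, which is precisely the point you spell out. Your expanded verification of the remaining representability conditions and the explicit identification of the cotangent complex as $p_{T+}(f^* \anL_{Y/S})$ is a faithful unpacking of what the paper leaves implicit.
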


\begin{proof}
	The same proof of \cite[Theorem 8.6]{Porta_Yu_Mapping} applies.
	It is enough to observe that Corollaries \ref{cor:plus_pushforward} and \ref{cor:plus_pushforward_base_change} allow to prove Lemma 8.4 in loc.\ cit.\ by removing the assumption of $Y \to S$ being locally of finite presentation.
\end{proof}

\ifpersonal

\section{Coherent dualizing sheaves}

It should be possible to apply the formalism of this paper to get a reasonable construction for the dualizing sheaf of a morphism of derived \kanal schemes.

\begin{defin}
	Let $f \colon X \to Y$ be a morphism of derived \kanal schemes.
	Choose a formal model $\ff \colon \fX \to \fY$ and let $\omega_{\fX / \fY}$ be a dualizing sheaf.
	We set
	\[ \omega_{X/Y} \coloneqq ( \omega_{\fX / \fY} )^\rig . \]
\end{defin}

\personal{The problem with this is that $\omega_{\fX / \fY}$ is not stable under the upper star pullback, only under the upper shriek.
	To make sense of it at the formal level, we would need to know that the extension of the functor $\Coh^+$ to formal stacks via upper star functoriality coincides with the extension via upper shriek functoriality. This might have been proven in Gaitsgory-Rozenblyum.}

\begin{prop}
	Suppose $f \colon X \to Y$ is proper and flat.
	Then:
	\begin{enumerate}
		\item We have
		\[ f_+(\cF) = f_*( \cF \otimes \omega_{X / Y} ) . \]
		
		\item the functor
		\[ \cF \mapsto f^!(\cF) \coloneqq f^*( \cF \otimes \omega_{X / Y} ) \]
		is a right adjoint for the functor $f_*$.
	\end{enumerate}
\end{prop}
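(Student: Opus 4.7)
The plan is to reduce both assertions to the formal level via formal models and then pass to the generic fiber, following the strategy used to construct $f_+$ in \cref{cor:plus_pushforward}. Using \cref{thm:formal_model_flat_map}, I would choose a proper flat formal model $\ff \colon \fX \to \fY$ for $f$; by the definition preceding the proposition, $\omega_{X/Y} \simeq (\omega_{\fX/\fY})^\rig$. The substantive input at the formal level is Grothendieck--Serre duality for proper morphisms of derived $\kc$-adic schemes, which furnishes a right adjoint $\ff^!$ to $\ff_* \colon \Coh^+(\fX) \to \Coh^+(\fY)$, together with a canonical identification $\ff^!(\fG) \simeq \ff^*(\fG) \otimes \omega_{\fX/\fY}$ in the flat case (the projection formula).

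Granting the formal duality, part (1) is a consequence of the characterization of $\ff_+$ by the adjunction $\ff_+ \dashv \ff^*$. For any $\fG \in \Coh^+(\fY)$ one would compute
\[ \Map_{\Coh^+(\fY)}(\ff_*(\fF \otimes \omega_{\fX/\fY}), \fG) \simeq \Map_{\Coh^+(\fX)}(\fF \otimes \omega_{\fX/\fY}, \ff^*(\fG) \otimes \omega_{\fX/\fY}) \simeq \Map_{\Coh^+(\fX)}(\fF, \ff^*(\fG)), \]
the second equivalence being obtained by tensoring with the $\otimes$-inverse of $\omega_{\fX/\fY}$. To descend to the generic fiber, pick a formal model $\fF$ of $\cF$ and apply $(-)^\rig$ to both sides: the commutation of $\ff_+$ with $(-)^\rig$ is contained in the proof of \cref{cor:plus_pushforward}, the commutation of the proper pushforward $\ff_*$ with $(-)^\rig$ follows from \cref{cor:base_change_hom} combined with properness, and the commutation of $\otimes$ with $(-)^\rig$ is formal. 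Part (2) is then a direct consequence of (1) together with the formal adjunction $\ff_* \dashv \ff^!$, transferred to the generic fiber once again via \cref{cor:base_change_hom}.

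The main obstacle is supplying the required derived Grothendieck--Serre duality for proper morphisms of derived $\kc$-adic schemes, in a form compatible with almost perfect complexes and with the projection formula in the flat case. A natural approach is to expand $\fX \simeq \colim_n \fX_n$ and $\fY \simeq \colim_n \fY_n$ as in the proof of \cref{prop:formal_plus_pushforward} and assemble the right adjoints $\ff_n^!$ provided by the algebraic duality of \cite{Lurie_SAG} into a global $\ff^!$. A subsidiary difficulty, flagged in the remark preceding the proposition, is that $\omega_{\fX/\fY}$ has natural functoriality along $\ff^!$ rather than $\ff^*$, so to make the definition $\omega_{X/Y} \coloneqq (\omega_{\fX/\fY})^\rig$ independent of the chosen formal model one must compare the $\ff^*$- and $\ff^!$-functorial extensions of $\Coh^+$ to derived formal stacks; this comparison is expected to follow from the six-functor formalism of Gaitsgory--Rozenblyum. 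Finally, the displayed computation above genuinely uses invertibility of $\omega_{\fX/\fY}$, so the cleanest formulation of the proposition likely requires $f$ smooth, in which case $\omega_{X/Y}$ is the determinant of $\anL_{X/Y}$ shifted by the relative dimension and the tensor-inversion step is valid; in the general proper flat case the identity in (1) should probably be read instead as a reformulation of $\ff^! \simeq \ff^*(-) \otimes \omega_{\fX/\fY}$ via the adjunction $\ff_+ \dashv \ff^*$, which bypasses the invertibility issue.
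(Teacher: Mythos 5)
There is nothing in the paper to compare your proposal against: this proposition sits in the \verb|\ifpersonal| appendix (which is compiled out), is prefaced by ``it should be possible to apply the formalism of this paper\ldots'', and carries no proof at all. Indeed the authors' own marginal note flags that even the \emph{definition} $\omega_{X/Y} \coloneqq (\omega_{\fX/\fY})^\rig$ is problematic, for exactly the reason you identify: $\omega_{\fX/\fY}$ is functorial along upper-shriek rather than upper-star, so independence of the chosen formal model requires comparing the two extensions of $\Coh^+$ to formal stacks, which the authors likewise defer to Gaitsgory--Rozenblyum. So your architecture --- flat formal model via \cref{thm:formal_model_flat_map}, duality at the formal level, descent to the generic fiber via \cref{cor:base_change_hom} and the mechanism of \cref{cor:plus_pushforward} (including the fact that $\ff_*$ preserves nilpotent objects and hence descends through \cref{cor:comparing_local_and_rigid_almost_perfect_complexes}) --- is surely what the authors intend, and your identification of the two genuine obstacles is accurate. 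You also silently correct a typo: as printed, part (2) reads $f^*(\cF \otimes \omega_{X/Y})$, which does not typecheck since $\cF$ lives on $Y$ and $\omega_{X/Y}$ on $X$; the intended formula is $f^*(\cF) \otimes \omega_{X/Y}$, which is what you use.

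The one genuine flaw is your treatment of the non-invertibility of $\omega_{\fX/\fY}$. Your displayed computation cancels $\omega_{\fX/\fY}$ and, as you concede, is only valid when $\omega$ is invertible; but your two fallbacks are respectively too restrictive and not yet a proof. Restricting to smooth $f$ is unnecessary: the identity $f_+(\cF) \simeq f_*(\cF \otimes \omega_{X/Y})$ holds for proper flat maps of finite tor-amplitude with no Gorenstein hypothesis. The correct mechanism is not cancellation but the unit of sheafified Grothendieck duality: for $\ff$ proper of finite tor-amplitude one has both $\ff_* \cHom(\fF, \ff^! \fG) \simeq \cHom(\ff_* \fF, \fG)$ and the equivalence $\ff^* \fG \xrightarrow{\sim} \cHom(\omega_{\fX/\fY}, \ff^! \fG)$ (cf.\ \cite[\S 6.4.5]{Lurie_SAG}, the same results invoked in \cref{prop:formal_plus_pushforward}), whence
\[ \Map(\ff_*(\fF \otimes \omega_{\fX/\fY}), \fG) \simeq \Map(\fF \otimes \omega_{\fX/\fY}, \ff^! \fG) \simeq \Map(\fF, \cHom(\omega_{\fX/\fY}, \ff^! \fG)) \simeq \Map(\fF, \ff^* \fG) , \]
and then (1) follows by uniqueness of left adjoints against the $\ff_+$ of \cref{prop:formal_plus_pushforward}. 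Your proposed ``bypass'' --- reading (1) as a reformulation of $\ff^! \simeq \ff^*(-) \otimes \omega_{\fX/\fY}$ --- secretly needs this same input: that equivalence alone gives the right adjoint of $\fF \mapsto \ff_*(\fF \otimes \omega_{\fX/\fY})$ as $\fG \mapsto \cHom(\omega_{\fX/\fY}, \ff^*\fG \otimes \omega_{\fX/\fY})$, and the identification of the latter with $\ff^*\fG$ is precisely the non-formal duality statement, not a consequence of the projection formula (already a misnomer in your write-up: the projection formula is $\ff_*(\fF \otimes \ff^*\fG) \simeq \ff_*\fF \otimes \fG$; the identity $\ff^!\fG \simeq \ff^*\fG \otimes \ff^!\cO_\fY$ is a separate consequence of properness and finite tor-amplitude). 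With that input supplied at the formal level, your rigidification step goes through as sketched, leaving only the well-definedness of $\omega_{X/Y}$ --- which remains open in the paper as well.
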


\fi

\bibliographystyle{plain}
\bibliography{dahema}

\end{document}